\newcommand{\set}[1]{\{#1\}}
\newcommand{\bigo}{\mathcal{O}}
\newcommand{\real}{\mathbb{R}}
\DeclareMathOperator{\argmin}{argmin}
\title{Newton-Anderson at Singular Points}
\author{Matt Dallas\\ Department of Mathematics, University of Florida \\ Sara Pollock\\Department of Mathematics, University of Florida}
  \newtheorem{assumption}{Assumption}[section]
  \newtheorem{remark}{Remark}[section]
  \newtheorem{definition}{Definition}[section]
  \newtheorem{theorem}{Theorem}[section]
  \newtheorem{lemma}{Lemma}[section]
  \newtheorem{proposition}{Proposition}[section]
\begin{document}

\maketitle

\section{Introduction}
\label{sec:introduction}

Given a nonlinear function $f:\mathbb{R}^n\to\mathbb{R}^n$ and root $x^*$ for which 
$f(x^*)=0$, it is well known that if the derivative of $f$ at the root is nonsingular, 
then Newton's method exhibits quadratic convergence in a sufficiently small ball 
centered at the root. On the other hand, if the derivative is singular at $x^*$, e.g., 
at a bifurcation point \cite{Se79}, Newton's method converges linearly in a star-like 
region containing the root \cite{Gr80}. 
This singular setting has been studied in great detail 
\cite{DeKeKe83,DeKe80,Gr80,Re78,Re79}, 
and a number of acceleration schemes have been proposed and analyzed 
\cite{BeFi12,BelMo15,DeKeKe83,DeKe82,HMT09,KaYaFu04,KeSu83,ScFr84}. For example, the Levenberg-Marquardt method featured in \cite{BeFi12,BelMo15,KaYaFu04} is known to be effective for solving nonlinear systems with singular Jacobians under the local-error bound condition.
The focus of this paper is the analysis 
and demonstration of an extrapolation scheme called Anderson 
acceleration, sometimes called Anderson mixing, e.g.,\cite{TRL04}, applied to 
Newton's method, for singular problems.
We will show that with the proposed safeguarding 
strategy, the method is both 
theoretically sound and can be beneficial in practice for singular problems.

Anderson acceleration was first proposed in 
\cite{Anderson65}, in the context of integral equations, to improve the convergence 
of fixed-point iterations.
Anderson acceleration is an attractive method to improve the convergence of linearly
converging fixed-point iterations due to its low computational cost, ease of 
implementation, and track record of success over a wide range of problems. 
The method recombines the $m$ most recent update steps and iterates to form an 
accelerated iterate at each stage of a given fixed-point method, where the particular
combination is generally given as the solution to a least-squares problem.  
Here $m$ may be
referred to as the algorithmic depth, which is often chosen small, say less than five
\cite{ToKe15}, but may sometimes benefit from being taken substantially larger
\cite{PoRe21,WaNi11}.
The method has been found beneficial in diverse applications, such as 
the computation of canonical tensor decompositions \cite{WHdS21},
the study of block copolymer systems \cite{TRL04}, 
geometry optimization and simulation \cite{sim-app-2018}, 
flow problems \cite{LWWY12,PRX18}, and electronic-structure computations 
\cite{AJW17,FaSa09},
to name a few. 
Substantial advances in understanding the method in relation to 
generalized Broyden methods and (nonlinear) GMRES 
are developed in \cite{Eyert96,FaSa09,WaNi11}.
Recently, significant effort has been devoted to analyzing Anderson acceleration applied 
to contractive and noncontractive operators with certain nondegeneracy assumptions 
\cite{EPRX19,PRX18,PoRe21,ToKe15}. 

Here, we will focus on the analysis of Anderson acceleration applied to 
Newton's method for a problem of the form $f(x) = 0$, when the derivative $f'$ 
is singular at a root $x^*$. 
Rapid convergence of the accelerated scheme in comparison with standard Newton 
has been demonstrated numerically in this singular case \cite{posc20}, where it is also 
observed that it is generally both sufficient and preferable
to set the algorithmic depth to $m=1$. It was also found in \cite{EPRX19} in 
a nondegenerate setting that Anderson accelerated Newton iterations with algorithmic
depth $m=1$ could converge where Newton iterations failed, but that increasing $m$
only slowed convergence.
In the remainder, we will
consider Anderson acceleration with depth $m=1$ applied to Newton iterations, which we 
will refer to simply as Newton-Anderson. 
In comparison to the accelerated Newton methods of 
\cite{DeKeKe83,DeKe82,KeSu83}, Newton-Anderson may be seen as advantageous
as it does not require explicit knowledge of the order of the root 
(defined in section \ref{sec:high_order_roots}), or
construction of an approximate projection mapping onto the null space of $f'(x)$.
In further contrast to these predictor-corrector methods, Newton-Anderson requires a 
single linear solve per iteration.
An analysis of Newton-Anderson in the one-dimensional singular case is presented in 
\cite{Po21_book}; 
however, to our knowledge no previous convergence theory has been developed for 
dimension $n > 1$.
The goal of this paper is to provide such a theory.
 
The remainder of the paper is organized as follows.
The underlying foundation of the analysis relies on
a technique for approximating the inverse of the derivative near a given point as developed in \cite{DeKeKe83}. We discuss this technique in \cref{preliminaries}, and in \cref{err-exp} apply it to a Newton-Anderson step to obtain an expansion of the error at step $k$. 
We then analyze this expansion in 
sections \ref{null-comp-and-theta} and 
\ref{analysis-of-pair-types} with a one-step analysis of the error based on previous
consecutive error-pairs, revealing the mechanism behind the changes in convergence rate
demonstrated by the method.
The main challenge of proving convergence for any Newton-like method in the singular case is that the geometry of the region of invertibility is more complex. To handle this, in \cref{convergence} we introduce a novel safeguarding scheme, which we call $\gamma$-safeguarding. This technique leads to the main results of this paper: 
when the null space 
of the derivative at the root is one-dimensional, then under the same conditions 
implying local convergence of the standard Newton method e.g., \cite[Theorem 1.2]{DeKe80}, Newton-Anderson with $\gamma$-safeguarding exhibits local convergence, and in general the rate of convergence is improved. We extend these results to high order roots in section \ref{sec:high_order_roots}. 
 These results are then demonstrated numerically in 
\cref{numerical-results} with several standard benchmark problems, both singular and nonsingular, including the Chandrasekhar H-equation \cite{chandra60,Ke18}. The introduced $\gamma$-Newton-Anderson is further shown to perform
favorably in comparison to existing methods developed for the problem class both in 
terms of robustness and efficiency.

\section{Preliminaries}
\label{preliminaries}
Let $f:\mathbb{R}^n\to\mathbb{R}^n$ be a $C^3$ function such that $f(x^*)=0$ with $x^*\in \mathbb{R}^n$. 
This regularity assumption is standard for the problem class; see, for
example \cite{DeKeKe83,DeKe80,DeKe82,GrOs83,Re78,Re79}.
Suppose $N=\text{null }\big(f'(x^*)\big)$ is nontrivial,  let $R=\text{range }\big(f'(x^*)\big)$, and let 
$\mathbb{R}^n=N\oplus R$. Throughout this paper, $B_r(x)$ denotes a ball of radius $r>0$ 
centered at $x$, $P_N$ and $P_R$ denote the orthogonal projections onto $N$ and $R$ respectively.
Denote the error by  $e_k=x_k-x^*$, and the Newton update step by 
$w_{k+1}=-f'(x_k)^{-1}f(x_k)$.
We take the {\em singular set} $S$ to be the set of all $x\in\mathbb{R}^n$ such that $\det \big(f'(x)\big)=0$.
When $\|\cdot\|=\|\cdot\|_2$, the \emph{Newton-Anderson algorithm} reads as follows.

\begin{algorithm}[H]
\begin{algorithmic}
\caption{Newton-Anderson}
\label{alg:n.anderson}
\STATE{Choose $x_0\in\mathbb{R}^n$. Set $w_1=-f'(x_0)^{-1}f(x_0)$, and $x_1=x_0+w_1$.}
\FOR{k=1,2,...}
    \STATE $w_{k+1}\gets -f'(x_k)^{-1}f(x_k)$
    \STATE $\gamma_{k+1}\gets (w_{k+1}-w_k)^Tw_{k+1}/\|w_{k+1}-w_k\|_2^2$
    \STATE $x_{k+1}\gets x_k+w_{k+1}-\gamma_{k+1}(x_k-x_{k-1}+w_{k+1}-w_k)$
\ENDFOR 
\end{algorithmic}
\end{algorithm}

Let $\hat{D}_N(x)(\cdot):=P_Nf''(x^*)(P_N(x-x^*),P_N(\cdot))$. Here we're writing $f''(x^*)$ as the bilinear map $f''(x^*)(\cdot,\cdot)$ on $\real^n\times\real^n$. Hence $\hat{D}_N(x)(\cdot)$ is a linear map from $N$ to $N$. 
If $\hat{D}_N(x)$ is invertible as a map on $N$ whenever $P_N(x-x^*)\neq 0$, one can show that there exists constants $\hat{\rho}>0$ and $\hat{\sigma}>0$ for which $f'(x)$ is invertible in the region
\begin{align}
    \hat{W}:=W(\hat{\rho},\hat{\sigma},x^*)=\set{x\in\mathbb{R}^n : \, \|x-x^*\|<\hat{\rho}  ,\,\|P_R(x-x^*)\|<\hat{\sigma}\|P_N(x-x^*)\|},
\end{align} 
and that the standard Newton iterates remain in this region for sufficiently small $\|e_0\|$. Further, $f'(x)^{-1}=\hat{D}_N(x)^{-1}+\bigo(1)$ and $\|f'(x)^{-1}\|\leq c\|x-x^*\|^{-1}$. See \cite{DeKeKe83,DeKe80}, or \cite{Gr80} for details. 
 When accelerating Newton's method, care must be taken to ensure the accelerated iterates remain in $W(\rho,\sigma,x^*)$. We won't insist that the iterates lie in $W(\rho,\sigma,x^*)$ in \cref{err-exp} through \cref{analysis-of-pair-types}, as the primary focus is a one-step analysis. Rather, we'll make the more relaxed assumption
that 
$\hat{D}(x_i)(\cdot)=P_Nf''(x^*)(e_i,P_N(\cdot))$ 
is  invertible as a map on $N$ in $B_r(x^*)\setminus S$, and the error $\|e_i\|$ is so small that $f'(x_i)$ is invertible so that $f'(x_i)^{-1}=\hat{D}(x_i)^{-1}+\bigo(1)$ holds with $i=k,k-1$. 
 Note that near $N$, $\hat{D}(x_i)=P_Nf''(x^*)(e_i,P_N)$ is invertible if and only if $\hat{D}_N(x_i)$ is invertible. In \cref{convergence}, we'll show that with safeguarding and  sufficiently small $\|e_0\|$, the Newton-Anderson iterates remain in $\hat{W}$ if $x_0\in \hat{W}$. 
We remark that $\hat{D}_N(x)$ fails to be invertible for all $P_N(x-x^*)\neq 0$ if $\dim N>1$ and odd (see \cite[p.148]{GrOs81}). In these cases, one may instead assume that there exists a $\varphi\in N$ for which the linear map $\hat{D}(\varphi)=P_Nf''(x^*)(\varphi,P_N)$ is invertible, and then work with the set
 \begin{align}
     W(\rho,\sigma,\xi,x^*)=W \cap \set{x\in\mathbb{R}^n : \|(P_N-P_{\varphi})(x-x^*)\|<\xi\|P_{\varphi}(x-x^*)\|}. 
 \end{align}
 Here $P_{\varphi}$ denotes the projection onto the one-dimensional subspace of $N$ spanned by $\varphi$ and $W=W(\rho,\sigma,x^*)$. This approach may be found in 
\cite{DeKeKe83,KeSu83,Re79}. A more general analysis of regions of invertibility may be found in \cite{Gr80}. For our purposes studying rates of convergence, the sets $W(\rho,\sigma,x^*)$ and $W(\rho,\sigma,\xi,x^*)$ suffice. In \cref{convergence} we focus on the case when $\dim N=1$, where we are able to work with the larger set  $W(\rho,\sigma,x^*)$. For reference, we explicitly state
 \begin{assumption}\label{order-of-root-assumption}
 The linear operator $\hat{D}(x)$ is invertible as a map on $N$ in the ball $B_{\hat{r}}(x^*)\setminus S$, and $\|x-x^*\|<\hat{r}$ implies $f'(x)^{-1}=\hat{D}(x)^{-1}+\bigo(1)$.
 \end{assumption}

Next we consider a standard error expansion for the analysis of Newton's
method for singular problems, and show how the behavior of Newton-Anderson iterates
differs from the behavior of Newton iterates without acceleration.
\section{Error Expansion}
\label{err-exp}
\begin{theorem}\label{error-expansion}
Let \cref{order-of-root-assumption} hold, and let $x_k\in B_{\hat{r}}(x^*)\setminus S$. 
Then 
\begin{align}
    \label{initial-keybound1}
    e_k+w_{k+1}&= \frac{1}{2}P_Ne_k+\frac{1}{2}\hat{D}^{-1}(x_k)f''(x_k)(e_k,P_Re_k)+\bigo(\|e_k\|^2)\\ 
    \label{initial-keybound2}
    w_{k+1}&=-\frac{1}{2}P_Ne_k+\bigg(\frac{1}{2}\hat{D}^{-1}(x_k)f''(x_k)(e_k,\cdot)-I\bigg)P_Re_k+\bigo(\|e_k\|^2).
\end{align}
\end{theorem}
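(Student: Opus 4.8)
The plan is to reduce the quantity $e_k+w_{k+1}$ to a second-order Taylor remainder and then replace $f'(x_k)^{-1}$ by $\hat D(x_k)^{-1}$ in the surviving leading term via \cref{order-of-root-assumption}. Since $f\in C^3$ and $f(x^*)=0$, Taylor's theorem gives, with remainders uniform on the closed ball $\overline{B_{\hat r}(x^*)}$,
\begin{align*}
 f(x_k) &= f'(x^*)e_k + \tfrac12 f''(x^*)(e_k,e_k) + \bigo(\|e_k\|^3),\\
 f'(x_k) &= f'(x^*) + f''(x^*)(e_k,\cdot) + \bigo(\|e_k\|^2),
\end{align*}
the second line being an expansion of operators. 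Evaluating the operator expansion at $e_k$ and subtracting the first line, the terms $f'(x^*)e_k$ cancel and the quadratic terms combine to
\begin{align*}
 f'(x_k)e_k - f(x_k) = \tfrac12 f''(x^*)(e_k,e_k) + \bigo(\|e_k\|^3).
\end{align*}
Since $f'(x_k)w_{k+1}=-f(x_k)$ gives $f'(x_k)(e_k+w_{k+1}) = f'(x_k)e_k - f(x_k)$, and since $\hat D(x_k)$ is linear in $e_k$ so that $\|\hat D(x_k)^{-1}\|=\bigo(\|e_k\|^{-1})$, whence $\|f'(x_k)^{-1}\|=\bigo(\|e_k\|^{-1})$ by \cref{order-of-root-assumption}, applying $f'(x_k)^{-1}$ absorbs the cubic remainder into $\bigo(\|e_k\|^2)$ and yields
\begin{align*}
 e_k+w_{k+1} = \tfrac12 f'(x_k)^{-1}f''(x^*)(e_k,e_k) + \bigo(\|e_k\|^2).
\end{align*}

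Next I would split $f''(x^*)(e_k,e_k)=f''(x^*)(e_k,P_Ne_k)+f''(x^*)(e_k,P_Re_k)$ by linearity in the second argument and treat the two halves separately. For the $N$-half, $f'(x^*)P_Ne_k=0$, so the operator expansion gives $f''(x^*)(e_k,P_Ne_k)=f'(x_k)P_Ne_k+\bigo(\|e_k\|^3)$; applying $f'(x_k)^{-1}$ (again $\bigo(\|e_k\|^{-1})$) leaves $P_Ne_k+\bigo(\|e_k\|^2)$, contributing $\tfrac12 P_Ne_k$. For the $R$-half, \cref{order-of-root-assumption} gives $f'(x_k)^{-1}=\hat D(x_k)^{-1}+\bigo(1)$, and $C^3$ regularity gives $f''(x^*)=f''(x_k)+\bigo(\|e_k\|)$; each resulting error term carries a factor $\|P_Re_k\|\le\|e_k\|$ against at most one compensating $\|e_k\|^{-1}$ from $\hat D(x_k)^{-1}$, so is $\bigo(\|e_k\|^2)$, leaving $\tfrac12 \hat D(x_k)^{-1}f''(x_k)(e_k,P_Re_k)$. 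Summing the two contributions gives \eqref{initial-keybound1}. Finally \eqref{initial-keybound2} follows immediately by subtracting $e_k=P_Ne_k+P_Re_k$ from \eqref{initial-keybound1} and regrouping, noting that $\hat D(x_k)^{-1}f''(x_k)(e_k,P_Re_k)$ is the linear map $v\mapsto\hat D(x_k)^{-1}f''(x_k)(e_k,v)$ evaluated at $v=P_Re_k$.

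The delicate point is the order bookkeeping at the inversion step: $f'(x_k)^{-1}$ is not bounded as $x_k\to x^*$, only $\bigo(\|e_k\|^{-1})$, so one must check that every remainder fed into it carries either three powers of $\|e_k\|$ or an extra factor $\|P_Re_k\|$, so that it survives as $\bigo(\|e_k\|^2)$. This is also precisely why $\tfrac12\hat D(x_k)^{-1}f''(x_k)(e_k,P_Re_k)$, whose norm is only $\bigo(\|P_Re_k\|)$, must be displayed rather than folded into the remainder: under the hypotheses of the theorem $\|P_Re_k\|$ need not be $\bigo(\|e_k\|^2)$. A secondary, routine point is that all implied constants may be taken uniform over $B_{\hat r}(x^*)\setminus S$, which follows from continuity of $f$ and its first three derivatives on the closed ball together with the uniform bound $\|\hat D(x_k)^{-1}\|\,\|e_k\|=\bigo(1)$ supplied by \cref{order-of-root-assumption}.
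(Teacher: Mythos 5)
Your proposal is correct, and it in fact supplies more than the paper does: the paper never proves \eqref{initial-keybound1}, instead citing \cite{DeKeKe83,DeKe80,Re79} for bounds of this type, and its entire stated argument is the remark that \eqref{initial-keybound2} follows from \eqref{initial-keybound1} by subtracting $e_k=P_Ne_k+P_Re_k$ and regrouping --- which is exactly your final step. Your derivation of \eqref{initial-keybound1} is the standard one from the cited references: reduce $e_k+w_{k+1}$ to $f'(x_k)^{-1}\bigl[\tfrac12 f''(x^*)(e_k,e_k)+\bigo(\|e_k\|^3)\bigr]$ by Taylor expansion, split the quadratic term along $N\oplus R$, use $f'(x^*)P_Ne_k=0$ together with the operator expansion of $f'(x_k)$ to extract $\tfrac12 P_Ne_k$ from the null half, and use the substitution $f'(x_k)^{-1}=\hat D(x_k)^{-1}+\bigo(1)$ plus $f''(x^*)=f''(x_k)+\bigo(\|e_k\|)$ for the range half; your bookkeeping that every remainder entering $f'(x_k)^{-1}$ carries either three powers of $\|e_k\|$ or a factor $\|P_Re_k\|$ is exactly the point that makes the expansion close. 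The one caveat worth flagging is your claim that linearity of $\hat D(x_k)$ in $e_k$ gives $\|\hat D(x_k)^{-1}\|=\bigo(\|e_k\|^{-1})$ uniformly: homogeneity gives the $\|e_k\|^{-1}$ scaling, but uniformity of the constant requires the inverse of the direction map $u\mapsto P_Nf''(x^*)(u,P_N\cdot)$ to stay bounded over the directions $u=e_k/\|e_k\|$ that occur, which can degenerate as $x_k$ approaches the singular set inside $B_{\hat r}(x^*)\setminus S$. The paper asserts the same bound $\|f'(x)^{-1}\|\leq c\|x-x^*\|^{-1}$ without proof in \cref{preliminaries} and effectively builds the needed uniformity into \cref{order-of-root-assumption} (and later restricts to the cone $\hat W$), so your argument operates at the same level of rigor as the paper; just be aware that this uniformity is an assumption, not a consequence of linearity alone.
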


Bounds similar to \eqref{initial-keybound1} may be found in \cite{DeKeKe83,DeKe80,Re79}. Note that \eqref{initial-keybound2} follows from \eqref{initial-keybound1} by subtracting $e_k$ to the right hand side, writing $e_k=P_Ne_k+P_Re_k$, and grouping like terms.

Henceforth, we will let $T_k(\cdot)=(1/2)\hat{D}^{-1}(x_k)f''(x_k)(e_k,\cdot)$. Let $\gamma_{k+1}$ be the coefficient computed in Algorithm \ref{alg:n.anderson}.  
For $x,y\in\mathbb{R}^n$, define the function 
\begin{align}\label{newt-and-sum}
    L_{k+1}(x,y):=(1-\gamma_{k+1})x+\gamma_{k+1}y.
\end{align} 
We will call any term of the form $L_{k+1}(x,y)$ the \textit{Newton-Anderson sum} of $x$ and $y$. When the inputs are indexed, such as $x_k$ and $x_{k-1}$, we write $L_{k+1}(x_k,x_{k-1})=x_k^{\alpha}$. As in \cite{PoRe21}, we define the \textit{optimization gain} 
\begin{align}\label{optimization-gain-def}
    \theta_{k+1}:=\|w_{k+1}^{\alpha}\|/\|w_{k+1}\|=\|w_{k+1}-\gamma_{k+1}(w_{k+1}-w_k)\|/\|w_{k+1}\|.
\end{align} 
In \cite{PoRe21}, $\theta_{k+1}$ was shown to be the key quantity determining the acceleration from a Newton-Anderson step in the nonsingular case. Here, in the singular case, it will be shown to be the key quantity determining the acceleration of the error along the null component. It was also shown in \cite{PoRe21} that $\theta_{k+1}=|\sin(w_{k+1}-w_k,w_k)|$, where $\sin(w_{k+1}-w_k,w_k)^2=1-\big((w_{k+1}-w_k)^Tw_{k+1}\big)^2/(\|w_{k+1}-w_k\|^2\|w_{k+1}\|^2)$ is the direction sine between $w_{k+1}-w_k$ and $w_{k+1}$. Therefore, $\theta_{k+1}$ is small when $w_{k+1}-w_k$ is nearly parallel to $w_{k+1}$. 

With the notation described in the preceding paragraph, \eqref{initial-keybound1} and \eqref{initial-keybound2} become 
\begin{align}
    \label{keybound1}
    e_k+w_{k+1}&= \frac{1}{2}P_Ne_k+T_kP_Re_k+\bigo(\|e_k\|^2)\\ 
    \label{keybound2}
    w_{k+1}&=-\frac{1}{2}P_Ne_k+\big(T_k-I\big)P_Re_k+\bigo(\|e_k\|^2).
\end{align}

The following proposition provides the error expansions for a Newton-Anderson step that will be fundamental to our analysis. 
\begin{proposition}
    Let $q_{k-1}^k$ denote a term for which $\|q_{k-1}^k\|\leq c\big(|1-\gamma_{k+1}|\,\|e_k\|^2+|\gamma_{k+1}|\,\|e_{k-1}\|^2\big)$. Under the assumptions of theorem \eqref{error-expansion}, if $x_{k+1}$ is the $(k+1)$-st Newton-Anderson iterate, then we can expand $e_{k+1}$ and $w_{k+1}^{\alpha}$ as  
    \begin{align}\label{newton-anderson-error}
    e_{k+1}&=\frac{1}{2}P_Ne_k^{\alpha}+\left(T_kP_Re_k\right)^{\alpha}+q_{k-1}^k\\ \label{theta-w-expansion}
    w_{k+1}^{\alpha}&=-\frac{1}{2}P_Ne_k^{\alpha}+\left((T_k-I)P_Re_k\right)^{\alpha}+q_{k-1}^k.
    \end{align}
\end{proposition}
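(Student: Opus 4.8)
The plan is to reduce both identities to the error expansions \eqref{keybound1}--\eqref{keybound2}, applied at the two consecutive steps $k$ and $k-1$, together with the affine-linearity of the Newton-Anderson sum $L_{k+1}$ from \eqref{newt-and-sum}. The starting point is to subtract $x^*$ from the update in \cref{alg:n.anderson} and regroup. Since $x_{k+1}=x_k+w_{k+1}-\gamma_{k+1}(x_k-x_{k-1}+w_{k+1}-w_k)$ and $x_k-x_{k-1}=e_k-e_{k-1}$, we obtain
\begin{align*}
e_{k+1}=(e_k+w_{k+1})-\gamma_{k+1}\big[(e_k+w_{k+1})-(e_{k-1}+w_k)\big]=L_{k+1}\big(e_k+w_{k+1},\,e_{k-1}+w_k\big),
\end{align*}
and, directly from \eqref{optimization-gain-def}, $w_{k+1}^{\alpha}=w_{k+1}-\gamma_{k+1}(w_{k+1}-w_k)=L_{k+1}(w_{k+1},w_k)$.

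Next I would substitute the expansion \eqref{keybound1} for $e_k+w_{k+1}$, together with its index-$(k-1)$ analogue for $e_{k-1}+w_k$, into the first identity; both are valid because the hypotheses of \cref{error-expansion} are in force at $k$ and $k-1$, so $x_k,x_{k-1}\in B_{\hat r}(x^*)\setminus S$. Because $L_{k+1}$ is affine in each argument and $P_N$ is linear, the $P_N$-contributions combine as $\tfrac12\big[(1-\gamma_{k+1})P_Ne_k+\gamma_{k+1}P_Ne_{k-1}\big]=\tfrac12P_Ne_k^{\alpha}$, and the $T$-contributions, under the indexing convention $x_k^{\alpha}=L_{k+1}(x_k,x_{k-1})$, combine as $(1-\gamma_{k+1})T_kP_Re_k+\gamma_{k+1}T_{k-1}P_Re_{k-1}=(T_kP_Re_k)^{\alpha}$. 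The leftover second-order terms are $(1-\gamma_{k+1})\bigo(\|e_k\|^2)+\gamma_{k+1}\bigo(\|e_{k-1}\|^2)$, which is exactly a term $q_{k-1}^k$ satisfying $\|q_{k-1}^k\|\le c\big(|1-\gamma_{k+1}|\,\|e_k\|^2+|\gamma_{k+1}|\,\|e_{k-1}\|^2\big)$; this gives \eqref{newton-anderson-error}. The expansion \eqref{theta-w-expansion} is obtained in the same way, substituting \eqref{keybound2} and its index-$(k-1)$ analogue into $w_{k+1}^{\alpha}=L_{k+1}(w_{k+1},w_k)$.

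The computation is essentially linear bookkeeping, so there is no serious obstacle; the two points that warrant care are (i) confirming that the $\bigo(\cdot)$ constant in \cref{error-expansion} is uniform over the neighborhood in question — which it is, since it depends only on bounds for $f''$ and $f'''$ on $B_{\hat r}(x^*)$ and on $\|\hat D(x)^{-1}\|\,\|x-x^*\|=\bigo(1)$, whence $T_k$ is itself $\bigo(1)$ — and (ii) tracking the $(\cdot)^{\alpha}$ notation consistently, so that the weights $1-\gamma_{k+1}$ and $\gamma_{k+1}$ attach to the $k$- and $(k-1)$-indexed pieces respectively. With these two observations, the proposition is just a change of variables from the error pair $(e_k,e_{k-1})$ to its Newton-Anderson sum.
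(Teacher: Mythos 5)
Your argument is correct and follows the paper's own proof essentially step for step: you write $e_{k+1}=L_{k+1}(e_k+w_{k+1},\,e_{k-1}+w_k)$, substitute \eqref{keybound1} (resp.\ \eqref{keybound2}) at indices $k$ and $k-1$, and regroup via the affine $\alpha$-notation, with the leftover second-order terms giving $q_{k-1}^k$. Your added remarks on uniformity of the $\bigo$ constant and on the indexing convention are sensible but do not change the substance of the argument.
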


\begin{proof}
Given $x_k$ and $x_{k-1}$, a Newton-Anderson step takes the form $x_{k+1} = L_{k+1}(x_k+w_{k+1},x_{k-1}+w_k)=(1-\gamma_{k+1})(x_k+w_{k+1})+\gamma_{k+1}(x_{k-1}+w_k)$. It follows that 
\begin{align}
	e_{k+1} = (1-\gamma_{k+1})(e_k+w_{k+1})+\gamma_{k+1}(e_{k-1}+w_k).
\end{align}
Applying \eqref{keybound1} to $e_k+w_{k+1}$ and $e_{k-1}+w_k$, and grouping up appropriate terms yields 
\begin{align}
	e_{k+1} &= \frac{(1-\gamma_{k+1})}{2}P_Ne_k+\frac{\gamma_{k+1}}{2}P_Ne_{k-1}+(1-\gamma_{k+1})T_kP_Re_k \\
&+\gamma_{k+1}T_{k-1}P_Re_{k-1} 
+(1-\gamma_{k+1})\bigo(\|e_k\|^2)+\gamma_{k+1}\bigo(\|e_{k-1}\|^2).\nonumber
\end{align}
Writing $q_{k-1}^k = (1-\gamma_{k+1})\bigo(\|e_k\|^2)+\gamma_{k+1}\bigo(\|e_{k-1}\|^2)$ and using the $\alpha$ notation for a Newton-Anderson sum, we have 
\begin{align}
	e_{k+1} = \frac{1}{2} P_Ne_k^{\alpha}+\left(T_kP_Re_k\right)^{\alpha}+q_{k-1}^k.
\end{align}
By analogous reasoning, applying \eqref{keybound2} to $(1-\gamma_{k+1})w_{k+1}+\gamma_{k+1}w_k$ yields \eqref{theta-w-expansion}.
\end{proof}

The structure of $f$ leads to a simple upper bound on $P_Re_{k+1}$. Apply $P_R$ to \cref{newton-anderson-error}. Since the range of $T_k$ lies in $N$, the only term remaining on the right hand side is $q_{k-1}^k$. Thus  
    \begin{align}\label{range-comp-bound}
         \|P_Re_{k+1}\|\leq |1-\gamma_{k+1}|C_1\|e_k\|^2+|\gamma_{k+1}|C_2\|e_{k-1}\|^2.
    \end{align}
The constants $C_1$ and $C_2$ are independent of $k$ and depend on $f$.
The bound in \eqref{range-comp-bound} resembles the result of Lemma 1 in \cite{posc20} for depth $m=1$. There, the Jacobian is assumed to be nonsingular at the solution $x^*$, 
and the $\gamma$ coefficients are assumed bounded. We do not assume $\gamma_{k+1}$ is bounded for our one-step analysis in sections \ref{null-comp-and-theta} and \ref{analysis-of-pair-types}. When we consider convergence in section \ref{convergence}, $\gamma$-safeguarding will ensure that the $\gamma_{k+1}$'s remain bounded in the region of convergence.
The point here is that like a standard Newton step, the range component of the error from a Newton-Anderson step behaves as if the Jacobian were nonsingular at the solution. Thus, when the Jacobian is singular at $x^*$, the source of slow convergence must come from the null space component. Conversely, if Newton-Anderson is seen to accelerate a given Newton sequence, and $f'(x^*)$ is singular, then it must accelerate the null component. If $P_Re_{k+1}$ is the only accelerated component, then the source of the linear convergence remains unaltered, and linear convergence would still be observed. Therefore much of the analysis focuses on the null component error, the foundation of which is the notion of \textit{pair-types}.
This method introduces a new technique to the one-step analysis of 
Anderson acceleration, and explains how the convergence rate changes at different 
steps for Newton-Anderson.

\subsection{Pair Types}
\label{pair-types}

We may consider the Newton-Anderson algorithm as acting on ordered pairs $(x_k,x_{k-1})$, where $x_k$ and $x_{k-1}$ are the previous two Newton-Anderson iterates. We then analyze the output, $x_{k+1}$, based on where the vectors $x_k$ and $x_{k-1}$ lie in $\mathbb{R}^n$, e.g., $x_k$ and $x_{k-1}$ both lie near $N$, or $x_k$ lies near $N$ and $x_{k-1}$ lies near $R$. This leads to the notion of pair types, which we now define. Here, ``dominant" means greatest in norm relative to the other terms on the right hand side of \eqref{keybound1}. We'll make this more explicit in the next section.

\begin{definition}\label{pair-type-def}
Let $\set{x_k}$ be a sequence of Newton-Anderson iterates. 
\begin{enumerate}
    \item $(x_k,x_{k-1})$ is an \emph{N-pair} if $(1/2)P_Ne_i$ is the dominant term in \eqref{keybound1} for $i=k,k-1$,
    \item $(x_k,x_{k-1})$ is an \emph{R-pair} if $T_iP_Re_i$ is the dominant term in \eqref{keybound1} for $i=k,k-1$, 
    \item $(x_k,x_{k-1})$ is an \emph{NR-pair} if $(1/2)P_Ne_i$ is the dominant term for $i=k$, and $T_iP_Re_i$ is the dominant term for $i=k-1$, and 
    \item $(x_k,x_{k-1})$ is an \emph{RN-pair} if $T_iP_Re_i$ is the dominant term for $i=k$, and $(1/2)P_Ne_i$ is the dominant term for $i=k-1$. 
\end{enumerate}

Each respective pair is a \emph{strong pair} if the corresponding Newton-Anderson sum is dominant in equation \eqref{newton-anderson-error}.
\end{definition}

Strong pairs are those for which the dominant terms in $x_k$ and $x_{k-1}$ remain dominant in the Newton-Anderson step.
Our analysis focuses on strong pairs, and of particular interest is how Newton-Anderson acts on strong N-pairs, since near $N$ the standard Newton method exhibits linear convergence.
One of the most interesting, but perhaps not surprising, results is that strong N-pairs are the most responsive to a successful optimization step in the Newton-Anderson algorithm. In other words, the pairs that are ``close" to the null space, the ``slow" region for standard Newton, stand to gain the most from Anderson acceleration.

\Cref{pair-flow-chart-update} below summarizes the results of the technical analysis in
sections \ref{null-comp-and-theta} and \ref{analysis-of-pair-types}. The left most path is optimal in the sense that we have effectively recovered the bound from the nonsingular case. The right most path is the most Newton-like case, where there is little or no acceleration of the null components. The middle path, in which $(x_k,x_{k-1})$ is compatible (see definition \ref{def:compatible}), lies between these two cases. As $\theta_{k+1}\to 1$, $x_{k+1}$ tends to look more like a standard Newton step, and as $\theta_{k+1}\to 0$, we see acceleration  of $x_{k+1}$. In this sense, $\theta_{k+1}$ interpolates between the nonsingular case where we have superlinear order, and the singular case with linear order. Note that \cref{loading-step-prop} can be interpreted as saying that no more than two consecutive pairs can follow the right most path in \cref{pair-flow-chart-update}. 

\begin{figure}[H]
    \centering
    \includegraphics[width=120mm]{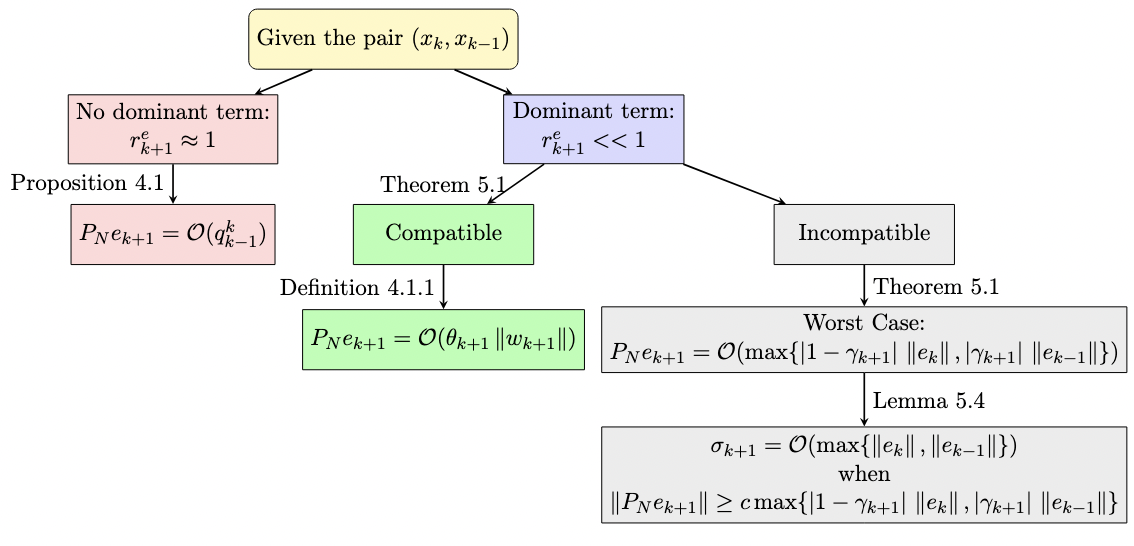}
    \caption{Summary of results from sections \cref{null-comp-and-theta} and \cref{analysis-of-pair-types}, with labels noting where each relation is proven or defined. }
    \label{pair-flow-chart-update}
\end{figure}

\section{The Null Space Component and Compatibility}\label{null-comp-and-theta}

Our goal in this section is to derive a general relation between the null component error and the optimization gain $\theta_{k+1}$. Such a relation will allow us to rigorously describe how the optimization step in \cref{alg:n.anderson} accelerates the null component. This relation is given in proposition \eqref{theta-controls-error-prop}. The strategy is to take the largest term (with respect to the norm) in the expansion of $P_Ne_{k+1}$ from \eqref{newton-anderson-error} and find conditions under which this term is bounded by $\theta_{k+1}\|w_{k+1}\|$. We'll then see (in the next section) when each strong pair-type satisfies these conditions. First, 
consider each term in expansion  \eqref{newton-anderson-error}.  Namely, 
\begin{align}\label{listed-terms-pn}
   \frac{(1-\gamma_{k+1})P_Ne_k}{2},\hspace{1mm} \frac{\gamma_{k+1}P_Ne_{k-1}}{2},\hspace{1mm}(1-\gamma_{k+1})T_kP_Re_k,\hspace{2mm}\gamma_{k+1}T_{k-1}P_Re_{k-1}, \text{ and }P_Nq_{k-1}^k.
\end{align}
Any sum of a given strict subset of these terms could be the dominant term in equation \eqref{newton-anderson-error}. 
We'll let $S^e_{k+1}$ denote the set of all such sums. 
For example, if $(x_k,x_{k-1})$ is a strong N-pair as defined in Definition \eqref{pair-type-def}, then $(1-\gamma_{k+1})P_Ne_k+\gamma_{k+1}P_Ne_{k-1}$ is the dominant term. 
We'll write 
\begin{align}\label{set-of-ratios-e}
R_{k+1}^e := \{ \|P_Ne_{k+1}-\overset{\sim}{E}_{k+1}\|/\|\overset{\sim}{E}_{k+1}\| : \overset{\sim}{E}_{k+1}\in S^e_{k+1} \},
\end{align}
and 
$E_{k+1} := \argmin R_{k+1}^e.$
We then define 
\begin{align}\label{min-ratio-def}
r_{k+1}^e:= \min R_{k+1}^e = \|P_Ne_{k+1}-E_{k+1}\|/\|E_{k+1}\|.
\end{align} 

Observe that if $q\in R_{k+1}^e$, then $1/q\in R_{k+1}^e$. This gives the following lemma.

\begin{lemma}\label{min-ratio-is-less-than-one} The minimum ratio $r_{k+1}^e$ satisfies $r_{k+1}^e\leq 1$ for all $k\geq 0$. 
\end{lemma}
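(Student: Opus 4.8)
The plan is to exploit the reciprocal-closure of $R_{k+1}^e$ noted immediately above the lemma, together with the elementary fact that any nonempty set of positive reals that is closed under $q\mapsto 1/q$ has minimum element at most $1$.

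First I would make the closure property precise. Applying $P_N$ to the expansion \eqref{newton-anderson-error} and using that $\text{range}(T_i)\subseteq N$ for $i=k,k-1$, one sees that $P_Ne_{k+1}$ is exactly the sum of all five terms listed in \eqref{listed-terms-pn}; call them $t_1,\dots,t_5$. Every element of $S^e_{k+1}$ then has the form $\widetilde{E}_{k+1}=\sum_{i\in A}t_i$ for some proper nonempty subset $A\subseteq\set{1,\dots,5}$, and $P_Ne_{k+1}-\widetilde{E}_{k+1}=\sum_{i\in A^c}t_i$, where the complement $A^c$ is again a proper nonempty subset; hence $P_Ne_{k+1}-\widetilde{E}_{k+1}\in S^e_{k+1}$ as well. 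In the generic case where the partial sums arising are nonzero (otherwise one discards from $R_{k+1}^e$ those $\widetilde{E}_{k+1}$ with $\widetilde{E}_{k+1}=0$ or $P_Ne_{k+1}-\widetilde{E}_{k+1}=0$, which does not affect the minimizer), the ratio attached to $\widetilde{E}_{k+1}$, namely $q=\norm{P_Ne_{k+1}-\widetilde{E}_{k+1}}/\norm{\widetilde{E}_{k+1}}$, and the ratio attached to the complementary element, which equals $\norm{\widetilde{E}_{k+1}}/\norm{P_Ne_{k+1}-\widetilde{E}_{k+1}}=1/q$, both lie in $R_{k+1}^e$. This is exactly the observation $q\in R_{k+1}^e\Rightarrow 1/q\in R_{k+1}^e$.

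Then I would finish in one line: $R_{k+1}^e$ is finite and nonempty, so $r_{k+1}^e=\min R_{k+1}^e$ is attained and positive; by the closure property $1/r_{k+1}^e\in R_{k+1}^e$, and minimality gives $1/r_{k+1}^e\ge r_{k+1}^e$, i.e. $(r_{k+1}^e)^2\le 1$, whence $r_{k+1}^e\le 1$ since $r_{k+1}^e>0$. (The hypotheses of \cref{error-expansion}, namely Assumption \ref{order-of-root-assumption} and $x_k,x_{k-1}\in B_{\hat r}(x^*)\setminus S$, are in force throughout this section and are what allow $P_Ne_{k+1}$ to be decomposed as the full sum $t_1+\cdots+t_5$.) I do not expect any genuine obstacle here: the only points requiring a little care are the set-theoretic bookkeeping that the complement of a proper nonempty subset of the five terms is again a proper nonempty subset — so that differences of elements of $S^e_{k+1}$ remain in $S^e_{k+1}$ — and the harmless exclusion of degenerate configurations with a vanishing denominator; the rest is the one-line fact about reciprocal-closed sets.
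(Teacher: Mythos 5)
Your proof is correct and is essentially the paper's own argument: the paper proves the lemma precisely by the observation that $q\in R_{k+1}^e$ implies $1/q\in R_{k+1}^e$ (stated immediately before the lemma), which you have simply made explicit via the complementary-subset bookkeeping and the one-line fact about reciprocal-closed sets. Your handling of degenerate (zero-denominator) configurations is a harmless refinement the paper leaves implicit.
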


Analogous notation will be used for the expansion $P_N w_{k+1}^{\alpha}$ in \eqref{theta-w-expansion}. 
Namely, $S^{w}_{k+1}$ will denote the set of sums of strict subsets of the terms
\begin{align}\label{listed-terms-wn}
  -\frac{(1-\gamma_{k+1})}{2}P_Ne_k,\hspace{0.5mm} -\frac{\gamma_{k+1}P_Ne_{k-1}}{2},\hspace{0.5mm}(1-\gamma_{k+1})T_{k}P_Re_k,\hspace{0.5mm}\gamma_{k+1}T_{k-1}P_Re_{k-1}, \text{ and }P_Nq_{k-1}^k,
\end{align}

We'll write $R_{k+1}^w = \{\|P_Nw_{k+1}^{\alpha}-\overset{\sim}{F}_{k+1}\|/\|\overset{\sim}{F}_{k+1}\| : \overset{\sim}{F}_{k+1}\in S^w_{k+1}\}$,
$F_{k+1} := \argmin R_{k+1}^w$, 
and 
$r_{k+1}^w = \min R_{k+1}^w.$

We now state and prove the proposition referenced at the beginning of this section
that provides a bound on the null space component of the error in terms of the optimization gain $\theta_{k+1}$ (defined in \eqref{optimization-gain-def}).

\begin{proposition}
\label{theta-controls-error-prop} {Let the assumptions of} \cref{error-expansion} hold 
 for $x_k$ and $x_{k-1}$. Let $q_{k-1}^k$ denote a term for which $\|q_{k-1}^k\|\leq c\big(|1-\gamma_{k+1}|\,\|e_k\|^2+|\gamma_{k+1}|\,\|e_{k-1}\|^2\big)$, and suppose
    \begin{align}\label{compatibility-condition-general}
        \|E_{k+1}\|\leq \frac{\|P_Nw_{k+1}^{\alpha}\|}{1-r_{k+1}^e}.
    \end{align}
Then 
\begin{align}\label{null-bound-theta}
    \|P_Ne_{k+1}\|\leq 
\bigg(\frac{1+r_{k+1}^e}{1-r_{k+1}^e}\bigg)\theta_{k+1}\|w_{k+1}\|. 
\end{align}

\end{proposition}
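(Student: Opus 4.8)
The plan is to bound $\|P_N e_{k+1}\|$ by first replacing it with the ``dominant'' term $E_{k+1}$ up to the controlled relative error $r_{k+1}^e$, then relating $E_{k+1}$ to $\|P_N w_{k+1}^\alpha\|$ via the hypothesis \eqref{compatibility-condition-general}, and finally using the known identity $\|w_{k+1}^\alpha\| = \theta_{k+1}\|w_{k+1}\|$ from \eqref{optimization-gain-def} together with $\|P_N w_{k+1}^\alpha\| \le \|w_{k+1}^\alpha\|$. The first step is immediate from the definition of $r_{k+1}^e$ in \eqref{min-ratio-def}: by the triangle inequality, $\|P_N e_{k+1}\| \le \|P_N e_{k+1} - E_{k+1}\| + \|E_{k+1}\| = (1 + r_{k+1}^e)\|E_{k+1}\|$.

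Next I would invoke \eqref{compatibility-condition-general} to get $\|E_{k+1}\| \le \|P_N w_{k+1}^\alpha\| / (1 - r_{k+1}^e)$, which is legitimate since $r_{k+1}^e \le 1$ by \cref{min-ratio-is-less-than-one} (one should note that the borderline case $r_{k+1}^e = 1$ must be excluded or handled separately, since the bound \eqref{null-bound-theta} is vacuous/undefined there; presumably the proposition is understood with $r_{k+1}^e < 1$, or the statement is interpreted as $\|P_N e_{k+1}\| \le (1-r_{k+1}^e)^{-1}(1+r_{k+1}^e)\theta_{k+1}\|w_{k+1}\|$ with the convention that the right side is $+\infty$). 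Chaining the two inequalities gives
\begin{align}
\|P_N e_{k+1}\| \le \frac{1 + r_{k+1}^e}{1 - r_{k+1}^e}\,\|P_N w_{k+1}^\alpha\|.
\end{align}

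To finish, I would bound $\|P_N w_{k+1}^\alpha\| \le \|w_{k+1}^\alpha\|$, since $P_N$ is an orthogonal projection and hence has operator norm one, and then substitute the optimization-gain identity $\|w_{k+1}^\alpha\| = \theta_{k+1}\|w_{k+1}\|$, which is exactly \eqref{optimization-gain-def}. This yields
\begin{align}
\|P_N e_{k+1}\| \le \frac{1 + r_{k+1}^e}{1 - r_{k+1}^e}\,\theta_{k+1}\|w_{k+1}\|,
\end{align}
as claimed. The argument is essentially three short estimates strung together; the only real content is the hypothesis \eqref{compatibility-condition-general}, so the ``main obstacle'' is conceptual rather than technical — namely recognizing that \eqref{compatibility-condition-general} is precisely the condition that converts the purely algebraic relative-error bound into a bound against $\theta_{k+1}\|w_{k+1}\|$. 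I would double-check that the expansions \eqref{newton-anderson-error} and \eqref{theta-w-expansion} share the same list of candidate terms (up to signs on the $P_N e$ pieces), so that $S^e_{k+1}$ and $S^w_{k+1}$ are in natural correspondence and $E_{k+1}$, $r_{k+1}^e$ interact correctly with $P_N w_{k+1}^\alpha$; this correspondence is what makes \eqref{compatibility-condition-general} a natural rather than ad hoc assumption, and it is worth stating explicitly in the writeup.
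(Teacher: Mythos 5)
Your argument is correct and is essentially identical to the paper's own proof: triangle inequality via adding and subtracting $E_{k+1}$ to get $\|P_Ne_{k+1}\|\leq(1+r_{k+1}^e)\|E_{k+1}\|$, then the hypothesis \eqref{compatibility-condition-general}, then $\|P_Nw_{k+1}^{\alpha}\|\leq\|w_{k+1}^{\alpha}\|=\theta_{k+1}\|w_{k+1}\|$. Your side remarks (excluding the degenerate case $r_{k+1}^e=1$, and the correspondence between $S_{k+1}^e$ and $S_{k+1}^w$) are sensible observations but not needed beyond what the paper does.
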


\begin{proof}
Adding and subtracting $E_{k+1}$ to $P_Ne_{k+1}$ gives $\|P_Ne_{k+1}\|\leq (1+r_{k+1}^e)\|E_{k+1}\|\leq (1+r_{k+1}^e)(1-r_{k+1}^e)^{-1}\|P_Nw_{k+1}^{\alpha}\|$.
Then observing that
\begin{align}
     \bigg(\frac{1+r_{k+1}^e}{1-r_{k+1}^e}\bigg)\|P_Nw_{k+1}^{\alpha}\| 
    \leq \bigg(\frac{1+r_{k+1}^e}{1-r_{k+1}^e}\bigg)\|w_{k+1}^{\alpha}\|
    =\bigg(\frac{1+r_{k+1}^e}{1-r_{k+1}^e}\bigg)\theta_{k+1}\|w_{k+1}\|\nonumber
\end{align}
completes the proof.
\end{proof}

We'll soon state definition \ref{def:compatible}, which is motivated by proposition \ref{theta-controls-error-prop}, but first we'll prove two related propositions. The following gives sufficient conditions for \eqref{compatibility-condition-general} to hold.

\begin{proposition}\label{compatbility-criteria-remark} 
The relation \eqref{compatibility-condition-general} holds if
$r_{k+1}^e < 1$, and there is an element $\overset{\sim}{F}_{k+1}\in S_{k+1}^w$ 
such that $\|\overset{\sim}{F}_{k+1}\|=\|E_{k+1}\|$, and $\|P_Ne_{k+1}-E_{k+1}\|=\|P_Nw_{k+1}^{\alpha}-\overset{\sim}{F}_{k+1}\|$. 
\end{proposition}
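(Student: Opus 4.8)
The plan is to show that the two hypothesized equalities reduce the inequality \eqref{compatibility-condition-general} to the elementary fact $r_{k+1}^e \le 1$ proven in \cref{min-ratio-is-less-than-one}, combined with the definition of $F_{k+1}$ as the minimizer over $S_{k+1}^w$. First I would unpack the notation: by definition of $r_{k+1}^e$ in \eqref{min-ratio-def}, we have $\|P_Ne_{k+1}-E_{k+1}\| = r_{k+1}^e\|E_{k+1}\|$. The supplied $\widetilde F_{k+1}\in S_{k+1}^w$ satisfies $\|\widetilde F_{k+1}\| = \|E_{k+1}\|$ and $\|P_Nw_{k+1}^{\alpha}-\widetilde F_{k+1}\| = \|P_Ne_{k+1}-E_{k+1}\| = r_{k+1}^e\|E_{k+1}\|$.

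Next I would run the reverse triangle inequality on $P_Nw_{k+1}^{\alpha}$ using this particular $\widetilde F_{k+1}$:
\begin{align}
\|P_Nw_{k+1}^{\alpha}\| \ge \|\widetilde F_{k+1}\| - \|P_Nw_{k+1}^{\alpha}-\widetilde F_{k+1}\| = \|E_{k+1}\| - r_{k+1}^e\|E_{k+1}\| = (1-r_{k+1}^e)\|E_{k+1}\|.\nonumber
\end{align}
Since $r_{k+1}^e < 1$ by hypothesis, the factor $1-r_{k+1}^e$ is positive, so dividing through yields $\|E_{k+1}\| \le \|P_Nw_{k+1}^{\alpha}\|/(1-r_{k+1}^e)$, which is exactly \eqref{compatibility-condition-general}. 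A small point to address cleanly is that $\widetilde F_{k+1}$ need not equal the minimizer $F_{k+1}$; this is fine, because we only need a \emph{lower} bound on $\|P_Nw_{k+1}^{\alpha}\|$, and any element of $S_{k+1}^w$ with the stated matching norms supplies one via the reverse triangle inequality. (If one preferred, one could note $r_{k+1}^w \le \|P_Nw_{k+1}^{\alpha}-\widetilde F_{k+1}\|/\|\widetilde F_{k+1}\| = r_{k+1}^e$, but this detour is not needed.)

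There is no real obstacle here; the statement is essentially a bookkeeping lemma that isolates a transparent sufficient condition for the compatibility inequality. The only thing requiring a moment's care is making sure the direction of every triangle inequality is the one that helps — we want a lower bound on $\|P_Nw_{k+1}^{\alpha}\|$ and an upper bound on $\|P_Ne_{k+1}-E_{k+1}\|$ — and that the positivity of $1-r_{k+1}^e$ (guaranteed by the strict inequality $r_{k+1}^e<1$, which is consistent with \cref{min-ratio-is-less-than-one}) is invoked before dividing. I would keep the write-up to three or four lines along exactly the chain above.
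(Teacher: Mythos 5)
Your proposal is correct and follows essentially the same route as the paper's own proof: both identify $\|P_Nw_{k+1}^{\alpha}-\overset{\sim}{F}_{k+1}\| = r_{k+1}^e\|\overset{\sim}{F}_{k+1}\|$ from the two hypothesized equalities and then apply the reverse triangle inequality to obtain $\|P_Nw_{k+1}^{\alpha}\|\geq (1-r_{k+1}^e)\|E_{k+1}\|$, dividing by the positive factor $1-r_{k+1}^e$. Your added remark that $\overset{\sim}{F}_{k+1}$ need not be the minimizer $F_{k+1}$ is a correct and slightly more explicit observation than the paper makes, but it does not change the argument.
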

\begin{proof}
Suppose $\|\overset{\sim}{F}_{k+1}\|=\|E_{k+1}\|$ and $\|P_Ne_{k+1}-E_{k+1}\|=\|P_Nw_{k+1}^{\alpha}-\overset{\sim}{F}_{k+1}\|$. Then $r_{k+1}^e=\|P_Nw_{k+1}^{\alpha}-\overset{\sim}{F}_{k+1}\|/\|\overset{\sim}{F}_{k+1}\|$, and $r_{k+1}^e\leq 1$ by \cref{min-ratio-is-less-than-one}. If $r_{k+1}^e<1$, then 
\begin{align}
    \|P_Nw_{k+1}^{\alpha}\|&\geq \|\overset{\sim}{F}_{k+1}\|-\|P_Nw_{k+1}^{\alpha}-\overset{\sim}{F}_{k+1}\|
    =(1-r_{k+1}^e)\|E_{k+1}\|.
\end{align}
\end{proof}

As $r_{k+1}^e$ approaches 1, the denominator in \eqref{null-bound-theta} approaches zero, resulting in a poor bound. However, the following proposition shows that  $r_{k+1}^e\approx 1$ implies $P_Ne_{k+1}$ was accelerated. 
\begin{proposition}\label{quadratic-case}
Let $r_{k+1}^e$ be defined as in \eqref{min-ratio-def} and let the assumptions of
\cref{error-expansion} hold for $x_k$ and $x_{k-1}$. If $0\leq \varepsilon< 1$ and $r_{k+1}^e=1-\varepsilon$, then $P_Ne_{k+1}=\big((2-\varepsilon)/(1-\varepsilon)\big)\bigo(\max\set{\|e_k\|^2,\|e_{k-1}\|^2})$. 
\end{proposition}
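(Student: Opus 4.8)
The plan is to decompose $P_Ne_{k+1}$ into a first-order piece and a quadratic remainder, recognize the first-order piece as an admissible subset-sum in $S_{k+1}^e$, and exploit that $r_{k+1}^e$ is a \emph{minimum} over $R_{k+1}^e$ to force that piece to be dominated by the remainder. First I would apply $P_N$ to \eqref{newton-anderson-error}; since the range of each $T_i$ lies in $N$, the term $\left(T_kP_Re_k\right)^{\alpha}=(1-\gamma_{k+1})T_kP_Re_k+\gamma_{k+1}T_{k-1}P_Re_{k-1}$ already lies in $N$, so
\begin{align*}
P_Ne_{k+1}=P+Q,\qquad P:=\tfrac12 P_Ne_k^{\alpha},\qquad Q:=\left(T_kP_Re_k\right)^{\alpha}+P_Nq_{k-1}^k .
\end{align*}
Comparing with \eqref{listed-terms-pn}, $P$ is exactly the sum of its first two listed terms and $Q$ is exactly the sum of its last three; hence $P\in S_{k+1}^e$ whenever $P\neq 0$, and $Q\in S_{k+1}^e$ whenever $Q\neq 0$. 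I would then dispatch the degenerate cases: if $P=Q=0$ then $P_Ne_{k+1}=0$ and there is nothing to prove, while if exactly one of $P,Q$ vanishes then $P_Ne_{k+1}$ equals the nonzero one, which lies in $S_{k+1}^e$, so using it as $\overset{\sim}{E}_{k+1}$ puts $0$ into $R_{k+1}^e$ and forces $r_{k+1}^e=0$, i.e.\ $\varepsilon=1$, contrary to hypothesis.

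So assume $P\neq 0$ and $Q\neq 0$. Since $P\in S_{k+1}^e$ and $P_Ne_{k+1}-P=Q$, the number $\|Q\|/\|P\|$ lies in $R_{k+1}^e$, so minimality of $r_{k+1}^e$ gives $1-\varepsilon=r_{k+1}^e\le\|Q\|/\|P\|$, which (as $\varepsilon<1$) rearranges to $\|P\|\le\|Q\|/(1-\varepsilon)$. The triangle inequality then yields $\|P_Ne_{k+1}\|\le\|P\|+\|Q\|\le\big(\tfrac{1}{1-\varepsilon}+1\big)\|Q\|=\tfrac{2-\varepsilon}{1-\varepsilon}\|Q\|$. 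It remains to check $\|Q\|=\bigo(\max\set{\|e_k\|^2,\|e_{k-1}\|^2})$: the term $P_Nq_{k-1}^k$ obeys this directly from the defining bound on $q_{k-1}^k$, and for $\left(T_kP_Re_k\right)^{\alpha}$ one uses that \cref{order-of-root-assumption} together with $f\in C^3$ makes each $T_i$ a bounded operator (so $\|T_iP_Re_i\|=\bigo(\|P_Re_i\|)$), while the range components $P_Re_i$ are of higher order by the same structural property of $f$ exploited in \eqref{range-comp-bound}; the $\gamma$-dependent constants are absorbed into $\bigo$. Substituting into the previous display proves the proposition.

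The step I expect to be the main obstacle is the observation underlying the whole argument: that the ``first-order'' component $\tfrac12 P_Ne_k^{\alpha}$ of $P_Ne_{k+1}$ is itself one of the admissible elements of $S_{k+1}^e$. Once this is noticed, testing the minimum ratio $r_{k+1}^e=1-\varepsilon$ against it instantly shows that this component cannot exceed $\|Q\|/(1-\varepsilon)$, hence is quadratically small; the decomposition into $P+Q$, the vanishing cases, the triangle inequality, and the order estimate for $Q$ are all routine.
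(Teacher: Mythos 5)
Your overall skeleton (pick a test element of $S_{k+1}^e$, use minimality of $r_{k+1}^e$ to bound it by the remainder, then apply the triangle inequality to get the factor $(2-\varepsilon)/(1-\varepsilon)$) is exactly the paper's strategy, but your choice of test element creates a genuine gap. You split $P_Ne_{k+1}=P+Q$ with $P=\tfrac12 P_Ne_k^{\alpha}$ and $Q=(T_kP_Re_k)^{\alpha}+P_Nq_{k-1}^k$, which forces you to show $\|Q\|=\bigo(\max\set{\|e_k\|^2,\|e_{k-1}\|^2})$. That claim is unjustified and in general false: under the hypotheses of the proposition (only the assumptions of \cref{error-expansion} for $x_k$ and $x_{k-1}$), the inputs $P_Re_k$ and $P_Re_{k-1}$ are full first-order quantities, so $T_iP_Re_i=\bigo(\|e_i\|)$ only, even though each $T_i$ is bounded. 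The structural property behind \eqref{range-comp-bound} concerns the \emph{output} $P_Re_{k+1}$ of a Newton--Anderson step, not the inputs; it would give quadratic smallness of $P_Re_k$ only if $x_k$ were itself assumed to be a previous Newton--Anderson (or Newton) output, which the proposition does not assume, and even then the bound would be quadratic in $\|e_{k-1}\|,\|e_{k-2}\|$ rather than in $\max\set{\|e_k\|,\|e_{k-1}\|}$. Indeed, the whole pair-type framework exists precisely because $T_iP_Re_i$ can dominate $P_Ne_i$ (strong R-pairs), so $Q$ need not be of higher order and your final display does not deliver the stated conclusion.

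The repair is to move the $T$-terms into the test element rather than the remainder: take $\overset{\sim}{E}_{k+1}=\tfrac12 P_Ne_k^{\alpha}+\left(T_kP_Re_k\right)^{\alpha}$, which is again a strict subset sum in $S_{k+1}^e$. Then $P_Ne_{k+1}-\overset{\sim}{E}_{k+1}=P_Nq_{k-1}^k$, so minimality gives $(1-\varepsilon)\|\overset{\sim}{E}_{k+1}\|\leq\|q_{k-1}^k\|$, and the triangle inequality yields $\|P_Ne_{k+1}\|\leq\big((2-\varepsilon)/(1-\varepsilon)\big)\|q_{k-1}^k\|$, which is quadratic purely by the defining bound on $q_{k-1}^k$ (with the $\gamma$-factors absorbed exactly as in the statement). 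This is the paper's proof. Your treatment of the degenerate cases where $P$ or $Q$ vanishes is careful and harmless, but the main estimate must go through $q_{k-1}^k$ alone, not through $Q$.
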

\begin{proof}
Since $r_{k+1}^e\leq \|P_Ne_{k+1}-\overset{\sim}{E}_{k+1}\|/\|\overset{\sim}{E}_{k+1}\|$ for any $\overset{\sim}{E}_{k+1}\in S_{k+1}^e$,
we have
\begin{align}
    1-\varepsilon=r_{k+1}^e\leq \frac{\|P_Ne_{k+1}-\overset{\sim}{E}_{k+1}\|}{\|\overset{\sim}{E}_{k+1}\|}
\end{align}
for all $\overset{\sim}{E}_{k+1}\in S_{k+1}^e$. Taking $\overset{\sim}{E}_{k+1}=(1/2)P_Ne_k^{\alpha}+T_kP_Re_k^{\alpha}$, it follows that 
\begin{align}
    1-\varepsilon\leq \frac{\|q_{k-1}^k\|}{\|(1/2)P_Ne_k^{\alpha}+T_kP_Re_k^{\alpha}\|}. 
\end{align}
Thus $(1-\varepsilon)\|(1/2)P_Ne_k^{\alpha}+T_kP_Re_k^{\alpha}\|\leq \|q_{k-1}^k\|$, and it follows that $\|P_Ne_{k+1}\|\leq \big((2-\varepsilon)/(1-\varepsilon)\big)\|q_{k-1}^k\|$.
\end{proof}

Now we state definition \ref{def:compatible}.

\begin{definition}\label{def:compatible}
Let $\set{x_k}$ be a sequence of Newton-Anderson iterates. We say that $x_{k+1}$ is \emph{compatible} or a \emph{compatible step} if there exists a moderate constant $C>0$ independent of $k$ such that $\|P_Ne_{k+1}\|\leq C \theta_{k+1}\|w_{k+1}\|$, in which case we'll write $P_Ne_{k+1}=\bigo(\theta_{k+1}\|w_{k+1}\|)$. Otherwise,
    $(x_k,x_{k-1})$ is an \emph{incompatible pair}, and $x_{k+1}$ is \emph{incompatible} or an \emph{incompatible step} 
\end{definition}
We note that this particular use of $\bigo$ is common, e.g., \cite{Wr95}.\\

\textit{Compatible} here is suggestive of the result of \cref{theta-controls-error-prop}. If $(x_k,x_{k-1})$ is a pair that satisfies \cref{theta-controls-error-prop}, then $P_Ne_{k+1}=\bigo(\theta_{k+1}\|w_{k+1}\|)$. Hence a successful optimization step in Newton-Anderson implies acceleration of $P_Ne_{k+1}$, and therefore of $e_{k+1}$. This can be seen by applying \eqref{keybound2} to bound $w_{k+1}$ in terms of $e_k$. In particular, when $(x_k,x_{k-1})$ is a strong N-pair, \eqref{keybound2} implies that $\|w_{k+1}\|\leq (1/2)(1+c_1\sigma_k+c_2(1+\sigma_k)\|e_k\|)\,\|P_Ne_k\|$, where $\sigma_k = \|P_Re_k\|/\|P_Ne_k\|$, and $c_1$ and $c_2$ are constants determined by $f$. Combining this with compatibility gives 
\begin{align}\label{theta-acceleration}
	\|P_Ne_{k+1}\|\leq C\theta_{k+1}(1/2)(1+c_1\sigma_k+c_2(1+\sigma_k)\|e_k\|)\,\|P_Ne_k\|.
\end{align}
The error for the null component of an analogous standard Newton step, i.e., one where $(1/2)\|P_Ne_k\|$ is norm-dominant on the right-hand-side of \eqref{keybound1}, is given by 
\begin{align}\label{no-theta-acceleration}
\|P_N(e_k+w_{k+1})\|\leq (1/2)(1+c_1\sigma_k+c_2(1+\sigma_k)\|e_k\|)\|P_Ne_k\|.	
\end{align}
In $\hat{W}$, the region were the Jacobian is invertible defined in section \ref{sec:introduction}, we can bound $\sigma_k\leq \hat{\sigma}$ and $\|e_k\|\leq \hat{\rho}$, and we can expect $C$ in \eqref{theta-acceleration} to be moderate in size. In appendix \ref{appendix-b}, with $\gamma$-safeguarding, we obtain a bound of the form \eqref{theta-acceleration} with $C<1$ for sufficiently small $\hat{\sigma}$ and $\hat{\rho}$. Hence the bound in \eqref{theta-acceleration} for a Newton-Anderson step in $\hat{W}$ is essentially the bound seen in \eqref{no-theta-acceleration} for a standard Newton step in $\hat{W}$ scaled by $\theta_{k+1}$. Since $\theta_{k+1}\leq 1$, this implies that in $\hat{W}$, a Newton-Anderson step will be no worse than a Newton step, and in the case of a successfull optimization step, i.e., when $\theta_{k+1}$ is small, we have acceleration. Compatible steps also nicely mirror the Anderson theory developed in \cite{PoRe21} under certain nondegeneracy assumptions, where it's shown that small $\theta_{k+1}$ results in acceleration, and $\theta_{k+1}\approx 1$ results in a standard, non-accelerated step. In this sense, compatible pairs are those that behave like nonsingular pairs, where the mechanism behind the acceleration is $\theta_{k+1}$.
When $(x_k,x_{k-1})$ is \textit{incompatible}, i.e., $P_Ne_{k+1}\neq \bigo(\theta_{k+1}\|w_{k+1}\|)$, acceleration may still occur. However, this is not guaranteed, and a successful optimization step does not imply acceleration of $P_Ne_{k+1}$.
So far, it has been shown that the expansion of $P_Ne_{k+1}$ in \eqref{newton-anderson-error} either has a dominant term, and this term is  $\bigo(\theta_{k+1}\|w_{k+1}\|)$ when $(x_k,x_{k-1})$ is a compatible pair, or there is no dominant term and $P_Ne_{k+1}=\bigo(\max\{||e_k||^2,||e_{k-1}||^2\})$.

\section{Analysis of Pair Types}
\label{analysis-of-pair-types}

In this section, the results of \cref{null-comp-and-theta} are applied to pair types. As stated in the paragraph following \cref{pair-type-def}, we focus on strong pair types. If a given pair $(x_k,x_{k-1})$ is not strong, and there is a dominant term in equation \eqref{newton-anderson-error}, then this term can be analyzed analogously to the strong terms analyzed here. 
Moreover, strong pairs, in particular strong N-pairs, are most relevant for the convergence theory developed in section \ref{convergence}, and theorem \ref{loading-step-prop} proven later in this section essentially says that after two consecutive steps with little improvement in the null component of the error, the next step will either be a strong N-pair or be bounded only by higher order terms. 

\subsection{Compatibility Conditions}
Our aim here is to establish conditions for each strong pair type under which compatibility is assured. That is, conditions under which $P_Ne_{k+1}=\bigo(\theta_{k+1}\|w_{k+1}\|)$. Evidently, each pair type can be compatible if certain alignment conditions are met. These vary by pair type, with strong mixed pairs having the most stringent alignment conditions. On the other hand, strong N-pairs are automatically compatible.
\begin{lemma}\label{strong-n-pairs-are-compatible}
{Let the assumptions in} \cref{error-expansion}  hold for $x_k$ and $x_{k-1}$, and suppose $r_{k+1}^e < 1$.
If $(x_k,x_{k-1})$ is a strong N-pair, then $(x_k,x_{k-1})$ is a compatible pair. 
\end{lemma}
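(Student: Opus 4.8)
The plan is to verify the hypothesis of Proposition \ref{theta-controls-error-prop}—specifically the compatibility condition \eqref{compatibility-condition-general}—for a strong N-pair, and then invoke that proposition directly to conclude $\|P_Ne_{k+1}\|\leq C\theta_{k+1}\|w_{k+1}\|$ with $C=(1+r_{k+1}^e)/(1-r_{k+1}^e)$, which is a moderate constant independent of $k$ since $r_{k+1}^e < 1$ by assumption. I would use Proposition \ref{compatbility-criteria-remark} as the bridge: it suffices to exhibit an element $\widetilde{F}_{k+1}\in S_{k+1}^w$ with $\|\widetilde{F}_{k+1}\|=\|E_{k+1}\|$ and $\|P_Ne_{k+1}-E_{k+1}\|=\|P_Nw_{k+1}^\alpha-\widetilde{F}_{k+1}\|$.

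The key observation is the parallel structure of the two expansions \eqref{newton-anderson-error} and \eqref{theta-w-expansion}: the list of terms \eqref{listed-terms-pn} for $P_Ne_{k+1}$ and the list \eqref{listed-terms-wn} for $P_Nw_{k+1}^\alpha$ differ only in the sign (and factor) on the $P_Ne$-terms—the $(1/2)(1-\gamma_{k+1})P_Ne_k$ and $(1/2)\gamma_{k+1}P_Ne_{k-1}$ pieces appear with a minus sign in the $w$-expansion—while the $T$-terms and the $q_{k-1}^k$ remainder are identical. For a strong N-pair, the dominant term in \eqref{newton-anderson-error} is $E_{k+1}=(1/2)P_Ne_k^\alpha=(1/2)(1-\gamma_{k+1})P_Ne_k+(1/2)\gamma_{k+1}P_Ne_{k-1}$, and everything else—$(T_kP_Re_k)^\alpha$ plus $P_Nq_{k-1}^k$—is the complementary "small" part, so $P_Ne_{k+1}-E_{k+1}=(T_kP_Re_k)^\alpha+P_Nq_{k-1}^k$. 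The natural candidate is $\widetilde{F}_{k+1}=-E_{k+1}=-\frac{1}{2}P_Ne_k^\alpha$, which is a sum of a strict subset of the terms in \eqref{listed-terms-wn} (the negated $P_Ne$-terms), hence lies in $S_{k+1}^w$. Then $\|\widetilde{F}_{k+1}\|=\|E_{k+1}\|$ trivially, and $P_Nw_{k+1}^\alpha-\widetilde{F}_{k+1}=-\frac{1}{2}P_Ne_k^\alpha+(T_kP_Re_k-P_Re_k)^\alpha+q_{k-1}^k - (-\frac12 P_N e_k^\alpha)$; I need to check that the leftover equals $(T_kP_Re_k)^\alpha+P_Nq_{k-1}^k$ in norm, i.e. that projecting \eqref{theta-w-expansion} onto $N$ kills the $-P_Re_k^\alpha$ contribution up to the $q$-term, which follows because $P_N P_R = 0$ and the range of $T_k$ lies in $N$, so $P_N w_{k+1}^\alpha = -\frac12 P_N e_k^\alpha + (T_k P_R e_k)^\alpha + P_N q_{k-1}^k = P_N e_{k+1} - 2E_{k+1}$. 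That identity gives $\|P_Nw_{k+1}^\alpha-\widetilde{F}_{k+1}\| = \|P_Ne_{k+1}-2E_{k+1}+E_{k+1}\| = \|P_Ne_{k+1}-E_{k+1}\|$, exactly as required.

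With both conditions of Proposition \ref{compatbility-criteria-remark} met (using the assumed $r_{k+1}^e < 1$), \eqref{compatibility-condition-general} holds, Proposition \ref{theta-controls-error-prop} applies, and $\|P_Ne_{k+1}\|\leq \big((1+r_{k+1}^e)/(1-r_{k+1}^e)\big)\theta_{k+1}\|w_{k+1}\|$; setting $C = (1+r_{k+1}^e)/(1-r_{k+1}^e)$, which is independent of $k$ and moderate since $r_{k+1}^e$ is bounded away from $1$, shows $(x_k,x_{k-1})$ is a compatible pair per Definition \ref{def:compatible}. The main obstacle I anticipate is purely bookkeeping: making precise that for a strong N-pair the set $S_{k+1}^e$ is such that $E_{k+1}$ really is $\frac12 P_N e_k^\alpha$ (rather than some other subset-sum being the minimizer of the ratio) and, correspondingly, that $-\frac12 P_N e_k^\alpha \in S_{k+1}^w$ with the matching complement—this rests on the strong-pair hypothesis that the Newton-Anderson sum of the dominant terms stays dominant in \eqref{newton-anderson-error}, together with the sign-mirroring between \eqref{listed-terms-pn} and \eqref{listed-terms-wn}. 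Everything else is a short computation using $P_NP_R=0$ and $\operatorname{range}(T_k)\subseteq N$.
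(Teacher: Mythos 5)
Your proposal is correct and follows essentially the same route as the paper: you verify \eqref{compatibility-condition-general} via Proposition \ref{compatbility-criteria-remark} using the same witness $F_{k+1}=-\tfrac{1}{2}P_Ne_k^{\alpha}$, noting that $P_Nw_{k+1}^{\alpha}-F_{k+1}=(T_kP_Re_k)^{\alpha}+P_Nq_{k-1}^k=P_Ne_{k+1}-E_{k+1}$, and then conclude with Proposition \ref{theta-controls-error-prop}. The identification $E_{k+1}=\tfrac{1}{2}P_Ne_k^{\alpha}$ for a strong N-pair, which you flag as bookkeeping, is taken for granted in the paper's proof as well.
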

\begin{proof}
Let $E_{k+1}$ and $F_{k+1}$ be defined as in the discussion preceding \cref{theta-controls-error-prop}. To show that a strong N-pair is compatible, it suffices to prove \eqref{compatibility-condition-general}. Here, the condition from \cref{compatbility-criteria-remark} is used. Suppose $(x_k,x_{k-1})$ is a strong N-pair, so that $E_{k+1}=(1/2)P_Ne_k^{\alpha}$. Let $F_{k+1}=-(1/2)P_Ne_k^{\alpha}$. Then $\|E_{k+1}\|=\|F_{k+1}\|$, and 
\begin{align}
\|P_Nw_{k+1}^{\alpha}-F_{k+1}\|=\|T_kP_Re_k^{\alpha}+P_Nq_{k-1}^k\|=\|P_Ne_{k+1}-E_{k+1}\|.
\end{align} 
Relation \eqref{compatibility-condition-general} then follows by \cref{compatbility-criteria-remark}, by which $\|P_Ne_{k+1}\|=\bigo(\theta_{k+1}\|w_{k+1}\|)$. Thus $(x_k,x_{k-1})$ is a compatible pair. 
\end{proof}

\begin{remark} Lemma \ref{strong-n-pairs-are-compatible} is significant because it says that 
if $x_k$ and $x_{k-1}$ are close to $N$, and the resulting Newton-Anderson step remains 
near $N$, the region in which standard Newton is slowest, then $(x_k,x_{k-1})$ is 
guaranteed to be compatible, and therefore $P_Ne_{k+1}$ is controlled by 
$\theta_{k+1}\|w_{k+1}\|$ which implies acceleration when $\theta_{k+1}$ is small as described in the paragraph following definition \ref{def:compatible}.
This justifies the statement at the end of 
\cref{err-exp} that pairs near $N$ are the most responsive to a successful 
optimization step in the Newton-Anderson algorithm. In \cref{convergence} we introduce $\gamma$-safeguarding, which will ensure the iterates remain near $N$ if $x_0$ is chosen near $N$,
thereby guaranteeing the iterates remain well-defined and compatible.
\end{remark}

Lemma \ref{strong-r-pair-bounds}  gives conditions in which a strong R-pair is compatible, and a bound for the incompatible case. Note that in the case of an incompatible strong R-pair, we obtain a quadratic bound. However, we have no guarantee that the new iterate generated from this strong R-pair will be well-defined, 
as the Jacobian may not be invertible at this iterate.

\begin{lemma}\label{strong-r-pair-bounds}
{Let the assumptions in} \cref{error-expansion} hold for $x_k$ and $x_{k-1}$, 
let $(x_k,x_{k-1})$ be a strong R-pair, and suppose $r_{k+1}^e<1$. Then $(x_k,x_{k-1})$ is compatible if $(P_Ne_k^{\alpha})^Tq_{k-1}^k\geq 0$. Otherwise, if $(x_k,x_{k-1})$ is incompatible, then 
\begin{align}\label{strong-r-pair-incompatible-upper-bound}
    \|P_Ne_{k+1}\|\leq (1+r_{k+1}^e)\bigg(|1-\gamma_{k+1}|\,C_1\,q_{k-2}^{k-1}+|\gamma_{k+1}|\,C_2\,q_{k-3}^{k-2}
    \bigg),
\end{align}
where $q_{i-1}^i$ is a term such that $\|q_{i-1}^i\|\leq |1-\gamma_{i+1}|C_3\|e_i\|^2+C_4\bigo(\|e_{i-1}\|^2)$ for $i\in\set{k-2,k-1}$, where $C_3$ and $C_4$ are constants determined by $f$.
\end{lemma}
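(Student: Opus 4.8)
The plan is to project the two expansions \eqref{newton-anderson-error} and \eqref{theta-w-expansion} onto $N$ and exploit that the range of each $T_i$ lies in $N$. Since $P_N(T_iP_Re_i)=T_iP_Re_i$ and $P_NP_Re_i=0$, applying $P_N$ collapses both expansions to the same leading term:
\begin{align*}
P_Ne_{k+1}&=\tfrac{1}{2}P_Ne_k^{\alpha}+(T_kP_Re_k)^{\alpha}+P_Nq_{k-1}^k,\\
P_Nw_{k+1}^{\alpha}&=-\tfrac{1}{2}P_Ne_k^{\alpha}+(T_kP_Re_k)^{\alpha}+P_Nq_{k-1}^k,
\end{align*}
so $P_Ne_{k+1}$ and $P_Nw_{k+1}^{\alpha}$ differ only in the sign of $\tfrac{1}{2}P_Ne_k^{\alpha}$. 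Because $(x_k,x_{k-1})$ is a strong R-pair, $E_{k+1}=(T_kP_Re_k)^{\alpha}$, hence $P_Ne_{k+1}-E_{k+1}=\tfrac{1}{2}P_Ne_k^{\alpha}+P_Nq_{k-1}^k$ and $r_{k+1}^e\|E_{k+1}\|=\|\tfrac{1}{2}P_Ne_k^{\alpha}+P_Nq_{k-1}^k\|$.

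For the compatible case I would verify \eqref{compatibility-condition-general} directly, rather than through \cref{compatbility-criteria-remark}, since the hypothesis $(P_Ne_k^{\alpha})^Tq_{k-1}^k\ge 0$ is one-sided while \cref{compatbility-criteria-remark} would require an equality. Writing $P_Nw_{k+1}^{\alpha}=E_{k+1}+\bigl(-\tfrac{1}{2}P_Ne_k^{\alpha}+P_Nq_{k-1}^k\bigr)$ and applying the reverse triangle inequality, the relation $(1-r_{k+1}^e)\|E_{k+1}\|\le\|P_Nw_{k+1}^{\alpha}\|$ (which is \eqref{compatibility-condition-general}, since $r_{k+1}^e<1$) follows once $\|-\tfrac{1}{2}P_Ne_k^{\alpha}+P_Nq_{k-1}^k\|\le\|\tfrac{1}{2}P_Ne_k^{\alpha}+P_Nq_{k-1}^k\|$. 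Squaring and simplifying, this last inequality is equivalent to $(P_Ne_k^{\alpha})^T(P_Nq_{k-1}^k)\ge 0$; and since $P_Ne_k^{\alpha}\in N$ and $N\perp R$ we have $(P_Ne_k^{\alpha})^T(P_Nq_{k-1}^k)=(P_Ne_k^{\alpha})^Tq_{k-1}^k$, so it is exactly the hypothesis. With $r_{k+1}^e<1$, \cref{theta-controls-error-prop} then yields $\|P_Ne_{k+1}\|\le\frac{1+r_{k+1}^e}{1-r_{k+1}^e}\theta_{k+1}\|w_{k+1}\|$, i.e.\ $(x_k,x_{k-1})$ is compatible.

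For the bound \eqref{strong-r-pair-incompatible-upper-bound} I would note that incompatibility is not actually used: adding and subtracting $E_{k+1}$ gives $\|P_Ne_{k+1}\|\le(1+r_{k+1}^e)\|E_{k+1}\|=(1+r_{k+1}^e)\|(T_kP_Re_k)^{\alpha}\|$ for every strong R-pair. I would then expand $(T_kP_Re_k)^{\alpha}=(1-\gamma_{k+1})T_kP_Re_k+\gamma_{k+1}T_{k-1}P_Re_{k-1}$, bound $\|T_iP_Re_i\|\le C\|P_Re_i\|$ (the factor $\|e_i\|$ in $T_i$ cancelling the $\|e_i\|^{-1}$ growth of $\hat{D}(x_i)^{-1}$ from \cref{order-of-root-assumption}), and use that $x_k$ and $x_{k-1}$ are themselves Newton-Anderson iterates, so that applying $P_R$ to the error expansion of the step producing $x_i$ gives $P_Re_i=P_Rq_{i-2}^{i-1}$, hence $\|P_Re_i\|\le\|q_{i-2}^{i-1}\|$ with $\|q_{i-2}^{i-1}\|\le|1-\gamma_i|C_3\|e_{i-1}\|^2+C_4\bigo(\|e_{i-2}\|^2)$ for $i=k,k-1$. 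Collecting the two pieces produces \eqref{strong-r-pair-incompatible-upper-bound}.

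The step I expect to be the main obstacle is the sign bookkeeping in the second paragraph: one has to recognize that for a strong R-pair the \emph{only} obstruction to \eqref{compatibility-condition-general} is the cross term $(P_Ne_k^{\alpha})^Tq_{k-1}^k$, so its sign is precisely the dividing line between a compatible and an incompatible step. A secondary point requiring care is that the constants $C_1,C_2$ in \eqref{strong-r-pair-incompatible-upper-bound} are genuinely independent of $k$, which rests on the uniform bound $\|T_i\|\le C$ near $N$ coming from \cref{order-of-root-assumption}, and on there being enough prior iterates (here $k\ge 3$) for $q_{k-2}^{k-1}$ and $q_{k-3}^{k-2}$ to be defined.
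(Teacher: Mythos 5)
Your proposal is correct and follows essentially the same route as the paper's proof: identifying $E_{k+1}=(T_kP_Re_k)^{\alpha}$ for a strong R-pair, using the sign condition $(P_Ne_k^{\alpha})^Tq_{k-1}^k\geq 0$ (your squaring step is exactly the paper's polarization-identity argument) to verify \eqref{compatibility-condition-general} and invoke \cref{theta-controls-error-prop}, and then bounding $\|P_Ne_{k+1}\|\leq(1+r_{k+1}^e)\|(T_kP_Re_k)^{\alpha}\|$ and applying \eqref{range-comp-bound} to the earlier iterates for \eqref{strong-r-pair-incompatible-upper-bound}. The only differences are cosmetic: you apply the reverse triangle inequality directly instead of the paper's quotient form, and you are slightly more careful in writing $P_Nq_{k-1}^k$ and in noting that incompatibility is not actually needed for the second bound, which the paper also implicitly acknowledges.
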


\begin{proof}
First, it's shown that $(P_Ne_k^{\alpha})^Tq_{k-1}^k\geq 0$ implies $(x_k,x_{k-1})$ is compatible. Since $(x_k,x_{k-1})$ is strong R-pair, the dominant term $E_{k+1}$ is $T_kP_Re_k^{\alpha}$. By \cref{theta-controls-error-prop}, it suffices to show 
\begin{align}
    \|T_kP_Re_k^{\alpha}\|\leq \frac{\|P_Nw_{k+1}^{\alpha}\|}{1-r_{k+1}^e},
\end{align}
where $r_{k+1}^e$ is defined as in \eqref{min-ratio-def}. Here $r_{k+1}^e=\|(1/2)P_Ne_k^{\alpha}+q_{k-1}^k\|/\|T_kP_Re_k^{\alpha}\|$.

For any vectors $u$ and $v$ in $\mathbb{R}^n$, the polarization identity $4u^Tv=\|u+v\|^2-\|u-v\|^2$ implies that if $u^Tv\geq 0$, then $\|u-v\|\leq \|u+v\|$. Letting $u=q_{k-1}^k$ and $v=(1/2)P_Ne_k^{\alpha}$, it follows that if $\left(P_Ne_k^{\alpha}\right)^Tq_{k-1}^k\geq 0$, then $\|q_{k-1}^k-(1/2)P_Ne_k^{\alpha}\|\leq \|q_{k-1}^k+(1/2)P_Ne_k^{\alpha}\|$. By \eqref{theta-w-expansion}, 
\begin{align}
    \|T_kP_Re_k^{\alpha}\|\leq \frac{\|P_Nw_{k+1}^{\alpha}\|}{1-\frac{\|q_{k-1}^k-(1/2)P_Ne_k^{\alpha}\|}{\|T_kP_Re_k^{\alpha}\|}}\leq \frac{\|P_Nw_{k+1}^{\alpha}\|}{1-\frac{\|q_{k-1}^k+(1/2)P_Ne_k^{\alpha}\|}{\|T_kP_Re_k^{\alpha}\|}}.
\end{align}
The last inequality follows because $\|q_{k-1}^k-(1/2)P_Ne_k^{\alpha}\|\leq \|q_{k-1}^k+(1/2)P_Ne_k^{\alpha}\|$. Since $r_{k+1}^e=\|(1/2)P_Ne_k^{\alpha}+q_{k-1}^k\|/\|T_kP_Re_k^{\alpha}\|$, $(x_k,x_{k-1})$ is compatible by \cref{theta-controls-error-prop}. When $(x_k,x_{k-1})$ is incompatible, we still have $\|P_Ne_{k+1}\|\leq (1+r_{k+1}^e)\|T_kP_Re_k^{\alpha}\|$. After applying the triangle inequality and \eqref{range-comp-bound} to $\|T_kP_Re_k^{\alpha}\|$ we arrive at 
\begin{align}
    \|T_kP_Re_k^{\alpha}\|\leq |1-\gamma_{k+1}|\,C_1\,q_{k-2}^{k-1}+|\gamma_{k+1}|\,C_2\,q_{k-3}^{k-2}.
\end{align}
This gives the bound in \eqref{strong-r-pair-incompatible-upper-bound}.
\end{proof}

Now we come to strong mixed pairs, the case with the most stringent alignment conditions. As in \cref{theta-controls-error-prop}, $E_{k+1}=\argmin R_{k+1}^e$, where $R_{k+1}^e$ is defined in \eqref{set-of-ratios-e}.
We will also use the \textit{Newton-Anderson sum} notation $L_k(x,y)$ defined in \eqref{newt-and-sum}. 
\begin{lemma}\label{strong-mixed-compatibility-cond}
{Let the assumptions of} \cref{error-expansion} hold for $x_k$ and $x_{k-1}$, 
and suppose $r_{k+1}^e < 1$. Suppose $(x_k,x_{k-1})$ is a strong mixed pair. There are two cases. 
    \begin{enumerate}
        \item (Strong NR-pair) If $E_{k+1}=L_{k+1}(P_Ne_k,T_{k-1}P_Re_{k-1})$, then $(x_k,x_{k-1})$ is compatible if 
        \begin{enumerate}
            \item $(\,(1-\gamma_{k+1})P_Ne_k)^T(\gamma_{k+1}T_{k-1}P_Re_{k-1})\leq 0$, and \item $(\gamma_{k+1}P_Ne_{k-1})^T((1-\gamma_{k+1})T_kP_Re_k+q_{k-1}^k)\geq 0$.
        \end{enumerate}   
        
When $(x_k,x_{k-1})$ is not compatible, 
        \begin{align}
        \scalebox{0.93}{
            $\|P_Ne_{k+1}\|\leq C(1+r_{k+1}^e)\max\set{|1-\gamma_{k+1}|\,\|P_Ne_{k}\|,|\gamma_{k+1}|\,\|T_{k-1}P_Re_{k-1}\|}.$}
        \end{align}
        \item (Strong RN-pair) If $E_{k+1}=L_{k+1}(T_kP_Re_k,P_Ne_{k-1})$, then $(x_k,x_{k-1})$ is compatible if 
        \begin{enumerate}
            \item $(\,(1-\gamma_{k+1})T_kP_Re_k)^T(\gamma_{k+1}P_Ne_{k-1})\leq 0$, and \item $((1-\gamma_{k+1}P_Ne_{k})^T(\gamma_{k+1}T_{k-1}P_Re_{k-1}+q_{k-1}^k)\geq 0$.
        \end{enumerate}   When $(x_k,x_{k-1})$ is not compatible, 
        \begin{align}
        \scalebox{0.90}{
            $\|P_Ne_{k+1}\|\leq C(1+r_{k+1}^e)\max\set{|1-\gamma_{k+1}|\,\|T_kP_Re_k\|,(|\gamma_{k+1}|/2)\,\|P_Ne_{k-1}\|}.$}
        \end{align}
    \end{enumerate}
    In both cases, $C$ denotes a constant determined by $f$. 
\end{lemma}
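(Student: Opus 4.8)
The plan is to follow the template set by the proofs of \cref{strong-n-pairs-are-compatible} and \cref{strong-r-pair-bounds}. For a strong mixed pair the dominant sum $E_{k+1}\in S_{k+1}^e$ in \eqref{newton-anderson-error} is one of the two Newton--Anderson sums named in the statement, and the objective is to establish the compatibility condition \eqref{compatibility-condition-general}, after which \cref{theta-controls-error-prop} gives $\|P_Ne_{k+1}\|=\bigo(\theta_{k+1}\|w_{k+1}\|)$. The feature absent from the strong N-pair case is that the two constituents of $E_{k+1}$ transform differently in passing from the expansion \eqref{newton-anderson-error} of $e_{k+1}$ to the expansion \eqref{theta-w-expansion} of $w_{k+1}^\alpha$: a null-space term $\tfrac12 P_Ne_i$ changes sign while a term $T_iP_Re_i$ is unchanged. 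Hence the exact norm-matching hypothesis of \cref{compatbility-criteria-remark} is no longer available, and in its place I would use $\|P_Nw_{k+1}^\alpha\|\ge\|\widetilde F_{k+1}\|-\|P_Nw_{k+1}^\alpha-\widetilde F_{k+1}\|$ for a suitable $\widetilde F_{k+1}\in S_{k+1}^w$, with the two alignment conditions providing, respectively, $\|\widetilde F_{k+1}\|\ge\|E_{k+1}\|$ and $\|P_Nw_{k+1}^\alpha-\widetilde F_{k+1}\|\le\|P_Ne_{k+1}-E_{k+1}\|=r_{k+1}^e\|E_{k+1}\|$, the last equality because $E_{k+1}=\argmin R_{k+1}^e$. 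Chaining these yields $\|P_Nw_{k+1}^\alpha\|\ge(1-r_{k+1}^e)\|E_{k+1}\|$, which is \eqref{compatibility-condition-general}; here the hypothesis $r_{k+1}^e<1$ is used so that $1-r_{k+1}^e>0$.

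For the strong NR-pair, $E_{k+1}=\tfrac{1-\gamma_{k+1}}{2}P_Ne_k+\gamma_{k+1}T_{k-1}P_Re_{k-1}$ and I would take its mirror $\widetilde F_{k+1}=-\tfrac{1-\gamma_{k+1}}{2}P_Ne_k+\gamma_{k+1}T_{k-1}P_Re_{k-1}\in S_{k+1}^w$. Squaring, $\|\widetilde F_{k+1}\|^2-\|E_{k+1}\|^2=-2\big((1-\gamma_{k+1})P_Ne_k\big)^T\big(\gamma_{k+1}T_{k-1}P_Re_{k-1}\big)$, so condition (a) gives $\|\widetilde F_{k+1}\|\ge\|E_{k+1}\|$. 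Removing $E_{k+1}$ from \eqref{newton-anderson-error} and $\widetilde F_{k+1}$ from \eqref{theta-w-expansion} leaves two vectors that coincide except for the sign of the term $\tfrac{\gamma_{k+1}}{2}P_Ne_{k-1}$, so the polarization identity $4u^Tv=\|u+v\|^2-\|u-v\|^2$, applied as in the proof of \cref{strong-r-pair-bounds}, together with condition (b), gives $\|P_Nw_{k+1}^\alpha-\widetilde F_{k+1}\|\le\|P_Ne_{k+1}-E_{k+1}\|$; this completes \eqref{compatibility-condition-general}. The strong RN-pair is entirely symmetric: $E_{k+1}=(1-\gamma_{k+1})T_kP_Re_k+\tfrac{\gamma_{k+1}}{2}P_Ne_{k-1}$, the companion is $\widetilde F_{k+1}=(1-\gamma_{k+1})T_kP_Re_k-\tfrac{\gamma_{k+1}}{2}P_Ne_{k-1}$, condition (a) again controls $\|\widetilde F_{k+1}\|-\|E_{k+1}\|$ through one cross term, and the two residual vectors differ only in the sign of $\tfrac{1-\gamma_{k+1}}{2}P_Ne_k$, which is where (b) is used. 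In either case the incompatible bound comes from the coarser estimate $\|P_Ne_{k+1}\|\le(1+r_{k+1}^e)\|E_{k+1}\|$ (add and subtract $E_{k+1}$, exactly as in the proof of \cref{theta-controls-error-prop}), followed by the triangle inequality on $\|E_{k+1}\|$, with the factor $\tfrac12$ and the bound on $T_\bullet$ from \cref{order-of-root-assumption} absorbed into the constant $C$; this reproduces the stated $\max$-bounds.

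The step I expect to be the main obstacle is the bookkeeping that fixes $\widetilde F_{k+1}$: one must check term by term that, once $E_{k+1}$ is removed from \eqref{newton-anderson-error} and $\widetilde F_{k+1}$ from \eqref{theta-w-expansion}, the residual parts agree except for a single sign flip, and that the chosen $\widetilde F_{k+1}$ really is a sum of a strict subset of the $w$-terms in \eqref{listed-terms-wn}. Once that alignment is correct, conditions (a) and (b) are exactly the two polarization inequalities needed, and the rest of the argument is the same chain of inequalities used for strong N- and R-pairs.
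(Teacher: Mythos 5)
Your proposal is correct and takes essentially the same route as the paper's proof: the same sign-flipped companion term in the $w$-expansion, the same two applications of the polarization identity driven by alignment conditions (a) and (b), the reverse triangle inequality to establish \eqref{compatibility-condition-general} followed by \cref{theta-controls-error-prop}, and the same $(1+r_{k+1}^e)\|E_{k+1}\|$ estimate with the triangle inequality for the incompatible case. The only difference is presentational—the paper organizes the key estimate as a ratio bound after factoring, while you chain the inequalities additively.
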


\begin{proof}
The proof is very similar to that of  \cref{strong-r-pair-bounds}. We only discuss the part that uses the additional alignment conditions, namely conditions (a) in the lemma statement. Further, the proofs are identical for strong NR-pairs and strong RN-pairs. We'll focus on strong NR-pairs. By \cref{theta-controls-error-prop}, it suffices to show $\|E_{k+1}\|\leq \|P_Nw_{k+1}^{\alpha}\|/(1-r_{k+1}^e)$. Suppose $(x_k,x_{k-1})$ is a strong NR-pair. Then $E_{k+1}=(1-\gamma_{k+1})P_Ne_k/2+\gamma_{k+1}T_{k-1}P_Re_{k-1}$. 
Write \eqref{theta-w-expansion} as 
\begin{align}
P_Nw_{k+1}^{\alpha} &= -\frac{1}{2}(1-\gamma_{k+1})P_Ne_k+\gamma_{k+1}T_{k-1}P_Re_{k-1} \\ 
&-\frac{1}{2}\gamma_{k+1}P_Ne_{k-1}+(1-\gamma_{k+1})T_kP_Re_k+P_Nq_{k-1}^k.\nonumber
\end{align}
Applying the reverse triangle inequality gives 
\begin{align}
\|P_Nw_{k+1}^{\alpha}\| &\geq \left\|-\frac{1}{2}(1-\gamma_{k+1})P_Ne_k+\gamma_{k+1}T_{k-1}P_Re_{k-1}\right\|  \\ 
	&- \left\| -\frac{1}{2}\gamma_{k+1}P_Ne_{k-1}+(1-\gamma_{k+1})T_kP_Re_k+P_Nq_{k-1}^k\right\|.\nonumber
\end{align}
Factoring $ \|-\frac{1}{2}(1-\gamma_{k+1})P_Ne_k+\gamma_{k+1}T_{k-1}P_Re_{k-1}\|$ and dividing gives 
\begin{align}
    \|\gamma_{k+1}T_{k-1}P_Re_{k-1}-\frac{(1-\gamma_{k+1})}{2}P_Ne_k\|\leq \frac{\|P_Nw_{k+1}^{\alpha}\|}{1-\frac{\|(1-\gamma_{k+1})T_kP_Re_k-(\gamma_{k+1})/2)P_Ne_{k-1}+q_{k-1}^k\|}{\|\gamma_{k+1}T_{k-1}P_Re_{k-1}-((1-\gamma_{k+1})/2)P_Ne_k\|}}.
\end{align}
Since $(\,(1-\gamma_{k+1})P_Ne_k)^T(\gamma_{k+1}T_{k-1}P_Re_{k-1})\leq 0$, the polarization identity implies that $\|\gamma_{k+1}T_{k-1}P_Re_{k-1}+((1-\gamma_{k+1})/2)P_Ne_k\|\leq \|\gamma_{k+1}T_{k-1}P_Re_{k-1}-((1-\gamma_{k+1})/2)P_Ne_k\|$. Hence
\begin{align}\label{penultimate-step}
    \|((1-\gamma_{k+1})/2)P_Ne_k+\gamma_{k+1}T_{k-1}P_Re_{k-1}\|\leq \frac{\|P_Nw_{k+1}^{\alpha}\|}{1-\frac{\|(1-\gamma_{k+1})T_kP_Re_k-(\gamma_{k+1}/2)P_Ne_{k-1}+q_{k-1}^k\|}{\|\gamma_{k+1}T_{k-1}P_Re_{k-1}+(-(1-\gamma_{k+1})/2)P_Ne_k\|}}.
\end{align}
Now apply $(\gamma_{k+1}P_Ne_{k-1})^T((1-\gamma_{k+1})T_kP_Re_k+q_{k-1}^k)\geq 0$ to the right hand side of \eqref{penultimate-step}, and proceed as in the proof of \cref{strong-r-pair-bounds}. The proof of the strong RN-pair case is identical.
\end{proof}
\begin{remark}
In principal , as long as $f'(x_0)$ and $f'(x_1)$ are invertible, $x_0$ and $x_1$ can be chosen arbitrarily, but this strategy may lead to a mixed pair $(x_0,x_1)$. The stringent alignment conditions on strong mixed pairs suggest the better strategy is to chose $x_0$ near $N$ and take $x_1=x_0+w_1$ as written in \cref{alg:gamma-safeguarding-pseudocode}. This way both $x_1$ and $x_0$ will be near $N$, and in \cref{convergence} this will be shown to imply compatibility when $\gamma$-safeguarding is used. 
\end{remark}
The results of lemmas \ref{strong-n-pairs-are-compatible}-\ref{strong-mixed-compatibility-cond} together are summarized in the following theorem.

\begin{theorem}\label{summary-of-compatibility}
{Let the assumptions of} \cref{error-expansion} hold for $x_k$ and $x_{k-1}$. 
Suppose $(x_k,x_{k-1})$ is a 
strong pair. If $(x_k,x_{k-1})$ is a strong N-pair, then $(x_k,x_{k-1})$ is compatible. 
Otherwise, there are alignment conditions under which $(x_k,x_{k-1})$ is compatible. 
If $(x_k,x_{k-1})$ is an incompatible strong R-pair, then $P_Ne_{k+1}$ is bounded by higher order terms. 
If $(x_k,x_{k-1})$ is an incompatible strong mixed pair, then $P_Ne_{k+1}$ is no worse 
than $\bigo\big(\max\{|1-\gamma_{k+1}|\,\|e_k\|,|\gamma_{k+1}|\,\|e_{k-1}\|\}\big)$.  
\end{theorem}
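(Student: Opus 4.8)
The plan is to assemble \cref{summary-of-compatibility} directly from \cref{strong-n-pairs-are-compatible}, \cref{strong-r-pair-bounds}, and \cref{strong-mixed-compatibility-cond}: each clause of the theorem is one of those lemmas restated with the $f$-dependent constants collapsed into a single $\bigo$. First I would dispose of the strong N-pair case, which is exactly \cref{strong-n-pairs-are-compatible}: under the standing hypothesis $r_{k+1}^e<1$ it yields $\|P_Ne_{k+1}\|=\bigo(\theta_{k+1}\|w_{k+1}\|)$, i.e.\ compatibility, with no alignment conditions. For the remaining strong pair types, the ``alignment conditions under which $(x_k,x_{k-1})$ is compatible'' are precisely the sign conditions on the relevant inner products appearing in \cref{strong-r-pair-bounds} (the hypothesis $(P_Ne_k^{\alpha})^Tq_{k-1}^k\ge 0$) and in the two cases of \cref{strong-mixed-compatibility-cond} (conditions (a) and (b) in each case), so that clause is simply a pointer to those statements.

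Next I would treat the incompatible strong R-pair. \Cref{strong-r-pair-bounds} already supplies the bound \eqref{strong-r-pair-incompatible-upper-bound}; since every $q_{i-1}^i$ appearing there is $\bigo(\|e_i\|^2+\|e_{i-1}\|^2)$ and the $\gamma$-coefficients enter only through $|1-\gamma_{k+1}|$ and $|\gamma_{k+1}|$, the right-hand side is dominated by higher order (quadratic) terms in the errors. Combined with $1+r_{k+1}^e\le 2$, which follows from \cref{min-ratio-is-less-than-one}, this is exactly the stated conclusion that $P_Ne_{k+1}$ is bounded by higher order terms.

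For the incompatible strong mixed pair, I would start from the two max-bounds in \cref{strong-mixed-compatibility-cond}. The only additional observation needed is that $\|T_kP_Re_k\|=\bigo(\|e_k\|)$: indeed $T_k(\cdot)=(1/2)\hat{D}^{-1}(x_k)f''(x_k)(e_k,\cdot)$, and under \cref{order-of-root-assumption} one has $\|\hat{D}(x_k)^{-1}\|=\bigo(\|e_k\|^{-1})$ while $f''$ is bounded near $x^*$ by the $C^3$ hypothesis, so $\|T_k\|=\bigo(1)$ and hence $\|T_kP_Re_k\|\le\|T_k\|\,\|P_Re_k\|=\bigo(\|e_k\|)$, and likewise $\|T_{k-1}P_Re_{k-1}\|=\bigo(\|e_{k-1}\|)$. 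Substituting these, again using $1+r_{k+1}^e\le 2$, and absorbing $C$ and the two factors $1/2$ into the constant collapses both max-bounds into $\bigo\big(\max\{|1-\gamma_{k+1}|\,\|e_k\|,|\gamma_{k+1}|\,\|e_{k-1}\|\}\big)$, as claimed.

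I do not expect a serious obstacle; the argument is essentially bookkeeping over the three preceding lemmas. The one point that needs a little care is confirming $\|T_kP_Re_k\|=\bigo(\|e_k\|)$ — that is, that the apparent singularity $\|\hat{D}(x_k)^{-1}\|\sim\|e_k\|^{-1}$ is exactly cancelled by the factor $e_k$ sitting inside $f''(x_k)(e_k,\cdot)$ — and, relatedly, verifying that all the constants involved ($C_1,\dots,C_4$, the constant $C$ of \cref{strong-mixed-compatibility-cond}, and $1+r_{k+1}^e\le 2$) are genuinely independent of $k$, so that they may legitimately be absorbed into a single $\bigo$.
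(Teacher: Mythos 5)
Your proposal is correct and follows essentially the same route as the paper, which offers no separate argument for \cref{summary-of-compatibility} beyond stating that it summarizes \cref{strong-n-pairs-are-compatible}, \cref{strong-r-pair-bounds}, and \cref{strong-mixed-compatibility-cond}. Your added bookkeeping (in particular $\|T_k\|=\bigo(1)$ so that $\|T_kP_Re_k\|=\bigo(\|e_k\|)$, and $1+r_{k+1}^e\le 2$) is consistent with what the paper itself uses elsewhere, e.g.\ in the proof of \cref{gamma-safeguarding}.
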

    
    \subsection{Incompatible Pairs}
    In this section, we investigate the incompatible case further. We'll use $\sigma_{k+1}:=\|P_Re_{k+1}\|/\|P_Ne_{k+1}\|$ introduced in \cref{preliminaries}. One can think of $\sigma_{k+1}$ as measuring the angle between $e_{k+1}$ and the null space $N$, but it can also be interpreted as measuring the acceleration of $\|P_Ne_{k+1}\|$ relative to the standard Newton algorithm.
In a standard Newton step, $P_Ne_{k+1}=\bigo(\|e_k\|)$, and $P_Re_{k+1}=\bigo(\|e_k\|^2)$. Hence $\sigma_{k+1}=\bigo(\|e_k\|)$. So we can think of any Newton-Anderson step as ``Newton-like" when $\sigma_{k+1}=\bigo(\max\set{|1-\gamma_{k+1}|\,\|e_k\|,|\gamma_{k+1}|\,\|e_{k-1}\|})$, and as accelerated, if $\sigma_{k+1}=\bigo(\max\set{|1-\gamma_{k+1}|\,\|e_k\|^{\ell},|\gamma_{k+1}|\,\|e_{k-1}\|^{\ell}})$ with $0\leq \ell<1$.  From this perspective, the worst-case for a Newton-Anderson step is when $P_Ne_{k+1}$ is {incompatible and} bounded below by first order in the sense that $\|E_{k+1}\|>C\max\set{|1-\gamma_{k+1}|\,\|e_k\|,\,|\gamma_{k+1}|\,\|e_{k-1}\|}$. This can occur if there is a single term in \eqref{listed-terms-pn} that is much larger than the others in norm, or if $(x_k,x_{k-1})$ is an incompatible strong mixed pair since $P_Ne_{k+1}=\bigo(|1-\gamma_{k+1}|\,\|e_k\|,|\gamma_{k+1}|\,\|e_{k-1}\|)$ in the worst case by \cref{strong-mixed-compatibility-cond}.

\begin{lemma} \label{newton-like-sigma-bound}
Let the assumptions of \cref{error-expansion} hold for $x_k$ and $x_{k-1}$.
As in \cref{theta-controls-error-prop}, let $E_{k+1}=\argmin R_{k+1}^e$.
If $\|E_{k+1}\|>C\max\set{|1-\gamma_{k+1}|\,\|e_k\|,\,|\gamma_{k+1}|\,\|e_{k-1}\|}$, where $C>0$ is some constant dependent on
$f$, and $1-r_{k+1}^e\neq 0$, then  $\sigma_{k+1}=\bigo(\max\set{\|e_k\|,\|e_{k-1}\|})$. 
\end{lemma}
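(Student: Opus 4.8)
The plan is to bound $\sigma_{k+1} = \|P_Re_{k+1}\|/\|P_Ne_{k+1}\|$ by bounding the numerator from above and the denominator from below, both in terms of $\max\set{\|e_k\|,\|e_{k-1}\|}$. For the numerator I would invoke \eqref{range-comp-bound}, which gives $\|P_Re_{k+1}\|\leq |1-\gamma_{k+1}|C_1\|e_k\|^2+|\gamma_{k+1}|C_2\|e_{k-1}\|^2$; this is already $\bigo\big(\max\set{|1-\gamma_{k+1}|\,\|e_k\|^2,\,|\gamma_{k+1}|\,\|e_{k-1}\|^2}\big)$, i.e. second order in the relevant quantities. The work is therefore to get a matching first-order lower bound on $\|P_Ne_{k+1}\|$.

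For the denominator, the hypothesis $\|E_{k+1}\|>C\max\set{|1-\gamma_{k+1}|\,\|e_k\|,\,|\gamma_{k+1}|\,\|e_{k-1}\|}$ says precisely that the dominant term $E_{k+1}$ of the expansion \eqref{newton-anderson-error} is itself first order. Since $E_{k+1}=\argmin R_{k+1}^e$, we have $\|P_Ne_{k+1}-E_{k+1}\|=r_{k+1}^e\|E_{k+1}\|$, and adding and subtracting $E_{k+1}$ gives $\|P_Ne_{k+1}\|\geq (1-r_{k+1}^e)\|E_{k+1}\|$. Combining this with the hypothesis yields
\begin{align}
\|P_Ne_{k+1}\|\geq (1-r_{k+1}^e)\,C\max\set{|1-\gamma_{k+1}|\,\|e_k\|,\,|\gamma_{k+1}|\,\|e_{k-1}\|}.\nonumber
\end{align}
Now form the ratio: the $|1-\gamma_{k+1}|$ and $|\gamma_{k+1}|$ factors that appear (quadratically weighted in the numerator, linearly weighted in the denominator) cancel favorably, and one power of $\|e_k\|$ or $\|e_{k-1}\|$ survives in the numerator. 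Concretely,
\begin{align}
\sigma_{k+1}=\frac{\|P_Re_{k+1}\|}{\|P_Ne_{k+1}\|}\leq \frac{|1-\gamma_{k+1}|C_1\|e_k\|^2+|\gamma_{k+1}|C_2\|e_{k-1}\|^2}{(1-r_{k+1}^e)\,C\max\set{|1-\gamma_{k+1}|\,\|e_k\|,\,|\gamma_{k+1}|\,\|e_{k-1}\|}}\nonumber
\end{align}
and bounding each term of the numerator by the $\max$ in the denominator times $\max\set{\|e_k\|,\|e_{k-1}\|}$ (using $|1-\gamma_{k+1}|\|e_k\|^2 = (|1-\gamma_{k+1}|\|e_k\|)\|e_k\|$ and likewise for the other term) gives $\sigma_{k+1}\leq \big((C_1+C_2)/(C(1-r_{k+1}^e))\big)\max\set{\|e_k\|,\|e_{k-1}\|}$, which is the claim.

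The only delicate point is the step bounding the numerator: one must check that $|1-\gamma_{k+1}|\,\|e_k\|^2 \le \big(\max\set{|1-\gamma_{k+1}|\,\|e_k\|,\,|\gamma_{k+1}|\,\|e_{k-1}\|}\big)\cdot\max\set{\|e_k\|,\|e_{k-1}\|}$ and the analogous inequality for the $\|e_{k-1}\|^2$ term, which is immediate since $|1-\gamma_{k+1}|\,\|e_k\|$ is itself $\leq$ the max, and $\|e_k\|\leq\max\set{\|e_k\|,\|e_{k-1}\|}$. The hypothesis $1-r_{k+1}^e\neq 0$ is used exactly to keep the lower bound on $\|P_Ne_{k+1}\|$ from collapsing; note $r_{k+1}^e\leq 1$ by \cref{min-ratio-is-less-than-one}, so $1-r_{k+1}^e>0$ and the constant is well-defined. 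I expect no real obstacle here — the lemma is essentially a bookkeeping consequence of \eqref{range-comp-bound} and the definition of $r_{k+1}^e$ — the only thing to be careful about is tracking the $\gamma$-weights so the cancellation is legitimate rather than merely plausible.
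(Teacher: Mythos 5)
Your proposal is correct and follows essentially the same route as the paper's proof: an upper bound on $\|P_Re_{k+1}\|$ from \eqref{range-comp-bound}, the lower bound $\|P_Ne_{k+1}\|\geq (1-r_{k+1}^e)\|E_{k+1}\|\geq C(1-r_{k+1}^e)\max\set{|1-\gamma_{k+1}|\,\|e_k\|,|\gamma_{k+1}|\,\|e_{k-1}\|}$, and then the cancellation of the $\gamma$-weights term by term in the ratio. The only cosmetic difference is that you keep the numerator as a sum while the paper uses its max form; both yield the same $\bigo(\max\set{\|e_k\|,\|e_{k-1}\|})$ conclusion with constant $\bigo\big(1/(1-r_{k+1}^e)\big)$.
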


\newpage
\begin{proof}
By \eqref{range-comp-bound}, $\|P_Re_{k+1}\|\leq C\max\set{|1-\gamma_{k+1}|\,\|e_k\|^2,|\gamma_{k+1}|\,\|e_{k-1}\|^2}$. Let

$M_{k,k-1} = \max\set{|1-\gamma_{k+1}|\,\|e_k\|,\,|\gamma_{k+1}|\,\|e_{k-1}\|}$.
The assumption on $E_{k+1}$ allows us to bound $\|P_Ne_{k+1}\|$ below:
\begin{align}
\|P_Ne_{k+1}\| &= \|P_Ne_{k+1}-E_{k+1}+E_{k+1}\|  \\ 
&\geq (1-r_{k+1}^e)\|E_{k+1}\|\nonumber \\
&\geq C(1-r_{k+1}^e)M_{k,k-1}\nonumber.
\end{align} 

Combining the constants from the upper bound on $\|P_Re_{k+1}\|$ and the lower bound on $\|P_Ne_{k+1}\|$, we have 
\begin{align}
    \sigma_{k+1}=\frac{\|P_Re_{k+1}\|}{\|P_Ne_{k+1}\|}&\leq \frac{C}{1-r_{k+1}^e}\frac{\max\set{|1-\gamma_{k+1}|\,\|e_k\|^2,|\gamma_{k+1}|\,\|e_{k-1}\|^2}}{\max\set{|1-\gamma_{k+1}|\,\|e_k\|, |\gamma_{k+1}|\,\|e_{k-1}\|}} \\
    &\leq \frac{C}{1-r_{k+1}^e}\max\set{\|e_k\|,\|e_{k-1}\|}.\nonumber
\end{align}
The last inequality holds since $\max\set{|1-\gamma_{k+1}|\,\|e_k\|,|\gamma_{k+1}|\,\|e_{k-1}\|}\geq |1-\gamma_{k+1}|\,\|e_k\|$ and $|\gamma_{k+1}|\,\|e_{k-1}\|$.
\end{proof}

\begin{remark}\label{remark-on-sigma-bound}
Under the same assumptions, we can make the slightly stronger statement: Suppose $\|E_{k+1}\|> C\max\set{|1-\gamma_{k+1}|\,\|e_k\|^{\ell},|\gamma_{k+1}|\,\|e_{k-1}\|^{\ell}}$, where $\ell\in\mathbb{R}$ and $0<\ell<2$, then 

$\sigma_{k+1}\leq(C/(1-r_{k+1}^e))\max\set{\|e_k\|^{2-\ell},\|e_{k-1}\|^{2-\ell}}$. In this paper, however, we'll only use the case $\ell=1$ discussed in lemma \ref{newton-like-sigma-bound}.
\end{remark}

 By definition, $\|P_Re_{k+1}\|=\sigma_{k+1}\|P_Ne_{k+1}\|$. So if $\sigma_{k+1}=\bigo(\max\set{\|e_k\|,\|e_{k-1}\|})$, then 
\begin{align}
    \label{range-comp-bounded-by-sigma}
    \|P_Re_{k+1}\|\leq C\max\set{\|e_k\|,\|e_{k-1}\|}\|P_Ne_{k+1}\|.
\end{align}
This says that if $x_{k+1}$ is a Newton-like step and 
$\max\set{\|e_k\|,\|e_{k-1}\|}\ll 1$, then $\|P_Re_{k+1}\|<<\|P_Ne_{k+1}\|$. Therefore $x_{k+1}$ is close to $N$. That is, Newton-like steps send iterates towards $N$, the region in which compatibility is guaranteed
by \cref{strong-n-pairs-are-compatible}. Moreover, the smaller $\max\set{\|e_k\|,\|e_{k-1}\|}$ is, the closer Newton-like steps are to $N$. One may think that after a finite number of these Newton-like steps, the iterates will be sufficiently clustered around $N$ to yield a compatible pair. This is indeed the case, as is shown in the following theorem.  

\begin{theorem}\label{loading-step-prop}
Let \cref{order-of-root-assumption} hold, and let $x_i\in B_{\hat{r}}(x^*)\setminus S$ for $i=k,k-1,k-2,$ and $k-3$. 
Let $E_k = \argmin R_{k}^e$ and $E_{k-1}=\argmin R_{k-1}^e$.
Suppose $x_j$ and $x_{j-1}$ are incompatible with  $\|E_{j}\|>C\max\set{|1-\gamma_{j}|\,\|e_{j-1}\|,\,|\gamma_{j}|\,\|e_{j-2}\|}$, and $r_{j}^e$ is bounded away from one for $j=k,k-1$. Then $P_Ne_{k+1}$ is either $\bigo(\theta_{k+1}\|w_{k+1}\|)$, i.e., $(x_k,x_{k-1})$ is compatible, or consists only of higher order terms.
\end{theorem}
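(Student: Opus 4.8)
The plan is to leverage \cref{newton-like-sigma-bound} applied at steps $k$ and $k-1$ to show that the hypotheses force both $x_k$ and $x_{k-1}$ to be very close to $N$, which by \cref{strong-n-pairs-are-compatible} makes $(x_k,x_{k-1})$ a (strong) N-pair and hence compatible --- unless the pair fails to be strong, in which case there is no dominant term and the alternative conclusion (only higher order terms) applies via \cref{quadratic-case}. So the first step is to observe that since $x_k,x_{k-1}$ (equivalently the pairs $(x_{k-1},x_{k-2})$ and $(x_{k-2},x_{k-3})$) satisfy the lower-bound hypothesis on $\|E_j\|$ with $r_j^e$ bounded away from $1$, \cref{newton-like-sigma-bound} gives $\sigma_k = \bigo(\max\{\|e_{k-1}\|,\|e_{k-2}\|\})$ and $\sigma_{k-1} = \bigo(\max\{\|e_{k-2}\|,\|e_{k-3}\|\})$. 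Thus, as in \eqref{range-comp-bounded-by-sigma}, $\|P_Re_i\| \ll \|P_Ne_i\|$ for $i=k,k-1$ whenever the errors are small, i.e.\ $e_k$ and $e_{k-1}$ each make a small angle with $N$.

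Next I would feed this back into the expansion \eqref{keybound1}. For $i=k,k-1$ the term $T_iP_Re_i$ has norm $\bigo(\|e_i\|\,\|P_Re_i\|) = \bigo(\max\{\|e_{i-1}\|,\|e_{i-2}\|\}\,\|e_i\|\,\|P_Ne_i\|)$, which is dominated by $(1/2)\|P_Ne_i\|$ once $\max\{\|e_{k-1}\|,\|e_{k-2}\|,\|e_{k-3}\|\}$ is sufficiently small; the $\bigo(\|e_i\|^2)$ term is likewise $o(\|P_Ne_i\|)$ provided $P_Ne_i$ itself is not pathologically small relative to $\|e_i\|^2$ --- but that latter situation is exactly when $P_Ne_i = \bigo(\|e_i\|^2)$, forcing $r_{k+1}^e\approx 1$ and placing us in the ``higher order terms only'' alternative of the statement (this is the content of \cref{quadratic-case}). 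So in the complementary case, $(1/2)P_Ne_i$ is the dominant term in \eqref{keybound1} for $i=k,k-1$, meaning $(x_k,x_{k-1})$ is an N-pair. It remains to check strength: the Newton-Anderson sum $(1/2)P_Ne_k^\alpha = L_{k+1}((1/2)P_Ne_k,(1/2)P_Ne_{k-1})$ must dominate in \eqref{newton-anderson-error}. Either it does --- in which case we have a strong N-pair and \cref{strong-n-pairs-are-compatible} (using $r_{k+1}^e<1$, which holds since we are not in the $r_{k+1}^e\approx1$ branch) gives $P_Ne_{k+1} = \bigo(\theta_{k+1}\|w_{k+1}\|)$ --- or it does not, which means the remaining terms, all of order $\bigo(\max\{\|e_k\|^2,\|e_{k-1}\|^2\})$ or subject to the cancellation analyzed in \cref{quadratic-case}, dominate, and $P_Ne_{k+1}$ consists only of higher order terms. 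Either way one of the two stated conclusions holds.

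The main obstacle I anticipate is making the dichotomy ``N-pair versus higher-order-only'' airtight: the definition of dominant term is only loosely quantified (``greatest in norm relative to the other terms''), so I need to pin down, uniformly in $k$, a threshold $\hat r$ small enough that the cascade $\sigma_k,\sigma_{k-1}\to 0$ genuinely forces $(1/2)\|P_Ne_i\|$ to win in \eqref{keybound1} whenever $P_Ne_i$ is not itself second order. Concretely the argument hinges on the inequality $\|T_iP_Re_i\| + \bigo(\|e_i\|^2) < (1/2)\|P_Ne_i\|$, and I must rule out the degenerate possibility that $\|P_Ne_i\|$ is comparable to $\|e_i\|^2$ for $i=k$ or $k-1$ without it collapsing the whole pair into the higher-order case. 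I would handle this by a clean case split on whether $\|P_Ne_i\| \geq \|e_i\|^{3/2}$ (say) for both $i$; in the ``yes'' branch all lower-order-than-$P_Ne$ terms are genuinely negligible and we get a strong (or at least an) N-pair, and in the ``no'' branch $r_{k+1}^e$ is within $o(1)$ of $1$ and \cref{quadratic-case} delivers the higher-order conclusion directly. The constants $C_1,C_2$ controlling $q_{k-1}^k$ and the constant $C$ from the hypothesis are all $k$-independent by the standing assumptions, so the threshold can be chosen once and for all.
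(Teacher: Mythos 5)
Your proposal is correct and follows essentially the paper's own route: use the incompatibility hypotheses at steps $k$ and $k-1$ with \cref{newton-like-sigma-bound} and \eqref{range-comp-bounded-by-sigma} to make $\|P_Re_k\|$ and $\|P_Re_{k-1}\|$ higher order, then split on whether the null-component Newton--Anderson sum $(1/2)P_Ne_k^{\alpha}$ dominates the higher-order remainder, getting the $\bigo(\theta_{k+1}\|w_{k+1}\|)$ bound in one case and a purely higher-order bound in the other. The paper performs this dichotomy directly on \eqref{newton-anderson-error} and \eqref{theta-w-expansion} rather than detouring through the strong-N-pair classification and \cref{strong-n-pairs-are-compatible}, and your extra sub-case $\|P_Ne_i\|<\|e_i\|^{3/2}$ is vacuous because the $\sigma_i$ bounds already force $\|P_Ne_i\|\geq \|e_i\|/(1+\sigma_i)$; these differences are cosmetic.
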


\begin{proof}
Apply the triangle inequality to \eqref{newton-anderson-error} to get 
\begin{align}\label{superlinear-or-compatible}
    \|P_Ne_{k+1}\|&\leq \frac{1}{2}\|P_Ne_k^{\alpha}\|+|1-\gamma_{k+1}|\,C\,\|P_Re_k\|\\ 
&+|\gamma_{k+1}|\,C\,\|P_Re_{k-1}\|+\|q_{k-1}^k\|.\nonumber
\end{align}
By \cref{newton-like-sigma-bound} and \eqref{range-comp-bounded-by-sigma}, $\|P_Re_i\|\leq C\|e_i\|\max\set{\|e_{i-1}\|,\|e_{i-2}\|}$ for $i=k,k-1$.
Grouping the higher order terms into a single term denoted $Q_{k+1}$,\footnote{That is, $Q_{k+1}$ denotes a term such that $\|Q_{k+1}\|\leq |1-\gamma_{k+1}|\,C\,\|e_k\|\max\set{\|e_{k-1}\|,\|e_{k-2}\|}+|\gamma_{k+1}|\,C\,\|e_{k-1}\|\max\set{\|e_{k-2}\|,\|e_{k-3}\|}+q_{k-1}^k$.} \eqref{superlinear-or-compatible} becomes 
\begin{align}\label{superlinear-or-compatible-clean}
    \|P_Ne_{k+1}\|\leq \frac{1}{2}\|P_Ne_k^{\alpha}\|+Q_{k+1}.
\end{align}
There are now two cases. If $(1/2)\|P_Ne_k^{\alpha}\|>\|Q_{k+1}\|$, then 

\begin{align}
    \|P_Ne_{k+1}\|&\leq \bigg((1+\frac{\|Q_{k+1}\|}{(1/2)\|P_Ne_k^{\alpha}\|}\bigg)(1/2)\|P_Ne_k^{\alpha}\|
 \\   
& \leq\bigg(\frac{1+\frac{\|Q_{k+1}\|}{(1/2)\|P_Ne_k^{\alpha}\|}}{1-\frac{\|Q_{k+1}\|}{(1/2)\|P_Ne_k^{\alpha}\|}}\bigg)\theta_{k+1}\|w_{k+1}\|.\nonumber
\end{align}
The last inequality follows from applying the reverse triangle inequality to the expansion of $P_Nw_{k+1}^{\alpha}$ from \eqref{theta-w-expansion} as in the proof of \cref{compatbility-criteria-remark}. Otherwise, if $(1/2)\|P_Ne_k^{\alpha}\|\leq \|Q_{k+1}\|$. Then $\|P_Ne_{k+1}\|\leq 2\|Q_{k+1}\|$, so that $P_Ne_{k+1}$ is bounded only by higher order terms.
\end{proof}
From \cref{strong-mixed-compatibility-cond}, we know that incompatible pairs do not necessarily lead to improved error. However, we are assured by \cref{loading-step-prop} that no more than two steps with little or no decrease in error may occur, at which point we can expect improved error (locally) either by compatibility, for sufficiently small $\theta_{k+1}$, or from higher order terms dominating. Indeed, if we have two consecutive steps where there is little decrease (if any at all) in the null component of the error, then we can bound $\|E_j\|$ as seen in the statement of theorem \ref{loading-step-prop}. Then as in the proof we'd find that $\|P_Ne_{k+1}\|$ is compatible, implying acceleration for small $\theta_{k+1}$, or that it's bounded only by higher order terms, which also means acceleration near the solution $x^*$. 

In the next section we introduce the safeguarding strategy called $\gamma$-safeguarding, 
which leads to our main result and proof of convergence, \cref{thm:convergence-thm}.

\section{Convergence}
\label{convergence}
Here we restrict our attention to the case when $\dim N=1$. 
Recall from \cref{preliminaries} that if $\hat{D}_N(x)$ is invertible as a map on $N$ whenever $P_N(x-x^*)\neq 0$, then $f'(x)$ is invertible for all $x\in \hat{W}$. 
We'll show that with an appropriate safeguarding scheme, which we call \emph{$\gamma$-safeguarding}, Newton-Anderson iterates remain in $\hat{W}$ if $x_0\in \hat{W}$, and converge locally under the same conditions that imply local convergence of the standard Newton method. This safeguarding scheme provides an automated way to decide how to scale $\gamma_{k+1}$ so that $\nu_{k+1}$ in equation \ref{eta-notation} below remains bounded away from one. This will prove useful in the proof of theorem \ref{thm:convergence-thm}. Another interpretation is that by scaling $\gamma_{k+1}$ towards zero when appropriate, and therefore taking a more ``Newton-like" step, we prevent the null-space component from accelerating too much and possibly leaving the region of invertibility. We now prove \cref{gamma-safeguarding}, which provides theoretical justification 
for $\gamma$-safeguarding.
\begin{lemma}\label{gamma-safeguarding}
For $\lambda\in (0,1]$, let
    \begin{align}\label{eta-notation}
    \nu_{k+1}=\frac{\min\set{|1-\lambda\gamma_{k+1}|\,\|P_N(e_k+w_{k+1})\|,|\lambda\gamma_{k+1}|\,\|P_N(e_{k-1}+w_k)\|}}{\max\set{|1-\lambda\gamma_{k+1}|\,\|P_N(e_k+w_{k+1})\|,|\lambda\gamma_{k+1}|\,\|P_N(e_{k-1}+w_k)\|}}.
\end{align}
Fix $0<r<1$. Assume $|\gamma_{k+1}|<1$ and nonzero. Then given $\sigma_i<\hat{\sigma}$ and $\rho_i<\hat{\rho}$, $i=k,k-1$, there exists a number $\lambda\in(0,1]$ such that $\nu_{k+1}\leq r<1$ and $\min\set{|1-\lambda\gamma_{k+1}|\,\|P_N(e_k+w_{k+1}),|\lambda\gamma_{k+1}|\,\|P_N(e_{k-1}+w_k)\|}=|\lambda\gamma_{k+1}|\,\|P_N(e_{k-1}+w_k)\|$. 
\end{lemma}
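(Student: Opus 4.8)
The plan is a one-variable continuity argument in $\lambda$, carried out after first ruling out a degenerate denominator. Abbreviate $a(\lambda):=|1-\lambda\gamma_{k+1}|\,\|P_N(e_k+w_{k+1})\|$ and $b(\lambda):=|\lambda\gamma_{k+1}|\,\|P_N(e_{k-1}+w_k)\|$, so that $\nu_{k+1}=\min\{a(\lambda),b(\lambda)\}/\max\{a(\lambda),b(\lambda)\}$; it then suffices to produce $\lambda\in(0,1]$ with $b(\lambda)<a(\lambda)$ and $b(\lambda)/a(\lambda)\le r$. First I would note that for every $\lambda\in[0,1]$ we have $|\lambda\gamma_{k+1}|\le|\gamma_{k+1}|<1$, hence $1-\lambda\gamma_{k+1}>0$; consequently $a(\lambda)=(1-\lambda\gamma_{k+1})\|P_N(e_k+w_{k+1})\|$ and $b(\lambda)=\lambda|\gamma_{k+1}|\,\|P_N(e_{k-1}+w_k)\|$ are continuous on $[0,1]$ with $b(0)=0$.

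The only place the hypotheses $\sigma_i<\hat\sigma$, $\rho_i<\hat\rho$ (i.e.\ $x_i\in\hat W$) are actually needed is to guarantee $\|P_N(e_k+w_{k+1})\|\ne0$. Applying $P_N$ to \eqref{keybound1} and using that the range of $T_k$ lies in $N$ gives $P_N(e_k+w_{k+1})=\tfrac12 P_Ne_k+T_kP_Re_k+P_N\bigo(\|e_k\|^2)$. Since $\|\hat D^{-1}(x_k)\|=\bigo(\|e_k\|^{-1})$ (by the estimates recalled in \cref{preliminaries} and \cref{order-of-root-assumption}) one has $\|T_kP_Re_k\|=\bigo(\|P_Re_k\|)=\bigo(\sigma_k\|P_Ne_k\|)$, and in $\hat W$ also $\|e_k\|^2\le(1+\hat\sigma^2)\|e_k\|\,\|P_Ne_k\|$, so a short computation gives $\|P_N(e_k+w_{k+1})-\tfrac12 P_Ne_k\|\le c(\hat\sigma+\hat\rho)\|P_Ne_k\|$ with $c$ depending only on $f$. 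As $\hat\sigma$ and $\hat\rho$ are already taken small enough for $\hat W$ to be a region of invertibility and for \cref{error-expansion} to apply, we may assume $c(\hat\sigma+\hat\rho)<\tfrac14$, whence $\|P_N(e_k+w_{k+1})\|\ge\tfrac14\|P_Ne_k\|>0$ (if $e_k=0$ the iteration has converged, and $e_k\ne0$ with $x_k\in\hat W$ forces $P_Ne_k\ne0$). Writing $a_0:=\|P_N(e_k+w_{k+1})\|>0$, this also yields the uniform lower bound $a(\lambda)\ge(1-|\gamma_{k+1}|)a_0>0$ for all $\lambda\in[0,1]$.

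Because $a$ is bounded below by a positive constant, $g(\lambda):=b(\lambda)/a(\lambda)$ is continuous on $[0,1]$ with $g(0)=0<r$, so there is $\delta\in(0,1]$ with $g(\lambda)<r$ for all $\lambda\in[0,\delta)$; fix $\lambda^\star:=\delta/2\in(0,1]$. Then $b(\lambda^\star)=g(\lambda^\star)a(\lambda^\star)<r\,a(\lambda^\star)<a(\lambda^\star)$ since $r<1$, so $\min\{a(\lambda^\star),b(\lambda^\star)\}=b(\lambda^\star)$ and $\max\{a(\lambda^\star),b(\lambda^\star)\}=a(\lambda^\star)$; hence $\nu_{k+1}=b(\lambda^\star)/a(\lambda^\star)=g(\lambda^\star)<r$ and the minimizing term is $b(\lambda^\star)=|\lambda^\star\gamma_{k+1}|\,\|P_N(e_{k-1}+w_k)\|$, which is exactly the asserted conclusion, and in fact any sufficiently small $\lambda\in(0,1]$ works. (If $\|P_N(e_{k-1}+w_k)\|=0$ then $b\equiv0$ and every $\lambda\in(0,1]$ is admissible, so the continuity step is only needed in the generic case $\|P_N(e_{k-1}+w_k)\|>0$.) The one genuine obstacle is the nonvanishing of $\|P_N(e_k+w_{k+1})\|$ — that is, keeping the denominator of $\nu_{k+1}$ uniformly away from $0$ as $\lambda$ ranges over $(0,1]$, which is where the smallness of $\hat\sigma$ and $\hat\rho$ is used; everything else is an elementary limit at $\lambda=0$.
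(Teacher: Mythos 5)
Your argument is correct for the lemma as literally stated, but it takes a genuinely different and strictly softer route than the paper. You treat the claim as a pure existence statement: since $|\gamma_{k+1}|<1$, the term $b(\lambda)=|\lambda\gamma_{k+1}|\,\|P_N(e_{k-1}+w_k)\|$ vanishes as $\lambda\to 0^+$ while $a(\lambda)=|1-\lambda\gamma_{k+1}|\,\|P_N(e_k+w_{k+1})\|$ stays bounded below, so any sufficiently small $\lambda$ works; the hypotheses $\sigma_i<\hat\sigma$, $\rho_i<\hat\rho$ enter only to force $\|P_N(e_k+w_{k+1})\|\gtrsim\|P_Ne_k\|>0$ via \eqref{keybound1}, and that step of yours is sound (your bound $\|T_kP_Re_k\|=\bigo(\sigma_k\|P_Ne_k\|)$ matches the paper's own use of $\|T_k\|\le c_1$ in $\hat W$). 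The paper instead proves a quantitative comparison: using \eqref{keybound1} and \eqref{keybound2} it sandwiches $\|P_N(e_i+w_{i+1})\|$ between constant multiples of $\|w_{i+1}\|$ (relation \eqref{e-by-w}), deduces \eqref{nu-apprx}, and thereby reduces the requirement $\nu_{k+1}\le r$ to the computable inequality $|\lambda\gamma_{k+1}|/|1-\lambda\gamma_{k+1}|\le \beta_{k+1}$ with $\beta_{k+1}= r\|w_{k+1}\|/((1+\varepsilon)\|w_k\|)$, yielding the explicit thresholds $\lambda\le\beta_{k+1}/(\gamma_{k+1}(1+\beta_{k+1}))$ (resp.\ $\lambda\le\beta_{k+1}/(\gamma_{k+1}(\beta_{k+1}-1))$) that \cref{alg:gamma-safeguarding-pseudocode} actually implements. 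What the paper's route buys, and yours does not, is that the admissible $\lambda$ is characterized in terms of the observable quantities $\|w_{k+1}\|,\|w_k\|,\gamma_{k+1},r$ alone, so the lemma certifies the specific $\lambda_{k+1}$ returned by the $\gamma$-safeguarding algorithm; this is exactly how the lemma is invoked in the proof of \cref{thm:convergence-thm}, where $1/(1-\overset{\sim}{\nu}_{k+1})$ must be bounded for the algorithm's choice of $\lambda_{k+1}$, not merely for some unspecified small $\lambda$. Your continuity argument is cleaner and more elementary, but if it replaced the paper's proof, the link between the lemma and the implemented safeguarding rule (and hence the later convergence argument) would have to be re-established separately.
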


\begin{proof}
The key relation is 
\begin{align}\label{nu-apprx}
  \frac{|\gamma_{k+1}|\,\|P_N(e_{k-1}+w_k)\|}{|1-\gamma_{k+1}|\,\|P_N(e_k+w_{k+1})\|}\leq \frac{\bigg(\frac{1/2+c_3\sigma_{k}+c_2(1+\sigma_{k})\|e_{k}\|}{1/2-c_1\sigma_{k}-c_2(1+\sigma_{k})\|e_{k}\|}\bigg)}{\bigg(\frac{1/2-c_3\sigma_{k-1}-c_2(1+\sigma_{k-1})\|e_{k-1}\|}{1/2+c_1\sigma_{k-1}+c_2(1+\sigma_{k-1})\|e_{k-1}\|}\bigg)}\frac{|\gamma_{k+1}\,\|w_k\|}{|1-\gamma_{k+1}|\,\|w_{k+1}\|}.
\end{align}

To prove \eqref{nu-apprx}, it suffices to show 
\begin{align}\label{e-by-w}
\frac{\left(1/2-c_1\sigma_k-c_2(1+\sigma_k)\|e_k\|\right)}{\left(1/2+c_3\sigma_k+c_2(1+\sigma_k)\|e_k\|\right)}\|w_{k+1}\|\leq \|P_N(e_k+w_{k+1})\|\leq
\frac{\left(1/2+c_1\sigma_k+c_2(1+\sigma_k)\|e_k\|\right)}{\left(1/2-c_3\sigma_k-c_2(1+\sigma_k)\|e_k\|\right)}\|w_{k+1}\|
\end{align}
where the constants $c_1,c_2,$ and $c_3$ are determined by $f$. From \eqref{keybound1}, it follows that 
\begin{align}
\|P_N(e_{k}+w_{k+1})\|\leq \left(1/2+\|T_k\|\sigma_k+c\frac{\|e_k\|^2}{\|P_Ne_k\|}\right)\|P_Ne_k\|.
\end{align}
We also have $\|e_k\|\leq (1+\sigma_k)\|P_Ne_k\|$ 
Therefore 
\begin{align}
\|P_N(e_{k}+w_{k+1})\|\leq \left(1/2+\|T_k\|\sigma_k+c(1+\sigma_k)\|e_k\|\right)\|P_Ne_k\|.
\end{align}
We'll now use \eqref{keybound2} to bound $\|P_Ne_k\|$ in terms of $\|w_{k+1}\|$. Indeed, applying the reverse triangle inequality to the right hand side of \eqref{keybound2} gives 
\begin{align}
\|P_Ne_k\|\leq \left(1/2-\|T_k-I\|\sigma_k-c(1+\sigma_k)\|e_k\|\right)^{-1}\|w_{k+1}\|. 
\end{align}
Hence 
\begin{align}
\|P_N(e_{k}+w_{k+1})\|&\leq \left(1/2+\|T_k\|\sigma_k+c(1+\sigma_k)\|e_k\|\right)\|P_Ne_k\|\\ 
&\leq \frac{\left(1/2+\|T_k\|\sigma_k+c(1+\sigma_k)\|e_k\|\right)}{\left(1/2-\|T_k-I\|\sigma_k-c(1+\sigma_k)\|e_k\|\right)}\|w_{k+1}\|.\nonumber
\end{align}
Since $f\in C^3$, $\|T_k\|\leq c_1$ indpendent of $k$ in $W(\hat{\rho},\hat{\sigma},x^*)$, and $\|T_k-I\|\leq c_3$. This gives the second inequality in \eqref{e-by-w}. To obtain the first inequality, use \eqref{keybound1} to obtain a lower bound on $\|P_N(e_k+w_{k+1})\|$ in terms of $\|P_Ne_k\|$, then use \eqref{keybound2} to bound $\|P_Ne_k\|$ below in terms of $\|w_{k+1}\|$. 
Now,
since $\sigma_i<\hat{\sigma}$ and $\|e_i\|<\hat{\rho}$, the first ratio on the right-hand-side of \eqref{nu-apprx} is bounded by a constant $c_4<1+\varepsilon$, with $\varepsilon>0$, for sufficiently small $\hat{\sigma}$ and $\hat{\rho}$. Therefore, to prove the lemma it suffices to show that there exists a number $\lambda\in(0,1]$ such that $|\lambda \gamma_{k+1}|/|1-\lambda\gamma_{k+1}|\leq r\|w_{k+1}\|/((1+\varepsilon)\|w_{k}\|)$. Let $\beta_{k+1}=r\|w_{k+1}\|/((1+\varepsilon)\|w_{k}\|)$. Since $|\gamma_{k+1}|<1$, $|1-\lambda\gamma_{k+1}|=1-\lambda\gamma_{k+1}$. There are two cases to consider. If $\gamma_{k+1}>0$, then it suffices to take $\lambda\leq \beta_{k+1}/(\gamma_{k+1}(1+\beta_{k+1}))$. Now suppose $\gamma_{k+1}<0$. This gives $-\lambda\gamma_{k+1}/(1-\lambda_{k+1}\gamma_{k+1})\leq \beta_{k+1}$, and rearranging yields $\lambda\gamma_{k+1}(\beta_{k+1}-1)\leq \beta_{k+1}$. Note that if $\beta_{k+1}\geq 1$, then $\lambda\gamma_{k+1}(\beta_{k+1}-1)\leq \beta_{k+1}$ holds for any positive $\lambda$ since $\beta_{k+1}\geq 0$. In particular, we may take $\lambda=1$. Hence no safeguarding is necessary. If $\beta_{k+1}<1$, then it suffices to take $\lambda\leq \beta_{k+1}/(\gamma_{k+1}(\beta_{k+1}-1))$. In any of these cases, we may take $\lambda\leq 1$. 
\end{proof}

Based on Lemma \ref{gamma-safeguarding}, the safeguarded version of Algorithm
\ref{alg:n.anderson} is summarized as follows.
\begin{algorithm}[H]
\caption{$\gamma$-safeguarding}
\label{alg:gamma-safeguarding-pseudocode}
\begin{algorithmic}[1]
\STATE {Given $x_k$, $x_{k-1}$, $w_{k+1}$, $w_k$, and $\gamma_{k+1}$. Set $r\in (0,1)$ and $\lambda=1$.}
    \STATE {$\beta \gets r\|w_{k+1}\|/\|w_k\|$}
    \IF{$\gamma_{k+1}=0$ \textbf{ or } $\gamma_{k+1}\geq 1$}
    \STATE {$x_{k+1}\gets x_k+w_{k+1}$}
    \ELSE
    \IF{$|\gamma_{k+1}|/|1-\gamma_{k+1}|>\beta$}
        \IF {$\gamma_{k+1}>0\textbf{ and } \beta/(\gamma_{k+1}(1+\beta))<1$} 
        \STATE $\lambda\gets \beta/(\gamma_{k+1}(1+\beta))$
        \ENDIF
        \IF {$\gamma_{k+1}<0\textbf{ and } 0\leq\beta/(\gamma_{k+1}(\beta-1))<1$} 
        \STATE $\lambda\gets \beta/(\gamma_{k+1}(\beta-1))$
        \ENDIF
    \ENDIF 
    \ENDIF
    \STATE $\gamma_{k+1}\gets\lambda\gamma_{k+1}$
    \STATE $x_{k+1}\gets  x_k+w_{k+1}-\gamma_{k+1}(x_k-x_{k-1}+w_{k+1}-w_k)$
\end{algorithmic}
\end{algorithm}

If $\gamma_{k+1}=0$ or $\gamma_{k+1}\geq 1$, we just take a standard Newton step, i.e., $x_{k+1}=x_k+w_{k+1}$. This condition is similar to that of the safeguarding strategy seen in \cite{posc20}. There, the authors take a Newton step if the direction cosine between $w_{k+1}$ and $w_k$ exceeds $0.942$. The condition used in this paper, taking a Newton step when $\gamma_{k+1}\geq 1$, is a stronger condition than $\cos(w_{k+1},w_k)>0.942$ in the sense that if $\cos(w_{k+1},w_k)\geq 0.942$, then $|\gamma_{k+1}|<2$. On the other hand, $\gamma$-safeguarding is weaker in the sense that, depending on $\|w_{k+1}\|/\|w_k\|$, it's possible that $|\gamma_{k+1}|<1$ even if $\cos(w_{k+1},w_k)>0.942$. It is also important to note that bounding $(|\lambda\gamma_{k+1}|\,\|w_k\|)/(|1-\lambda\gamma_{k+1}|\,\|w_{k+1}\|)$ away from one also bounds $(|\lambda\gamma_{k+1}|\,\|P_Ne_{k-1}\|)/(|1-\lambda\gamma_{k+1}|\,\|P_Ne_k\|)$ away from one by \eqref{keybound2}. Indeed, $\eqref{keybound2}$ implies
\begin{align}
\|w_{k+1}\|&\leq (1/2+c_1\sigma_k+c_2(1+\sigma_k)\|e_k\|)\|P_Ne_k\|,\\
   \|w_{k+1}\|&\geq (1/2-c_1\sigma_k-c_2(1+\sigma_k)\|e_k\|)\|P_Ne_k\|. \nonumber
\end{align}
This fact will be used in the proof of \cref{thm:convergence-thm}. Analogous to \eqref{eta-notation}, we'll let 
\begin{align}
   \overset{\sim}{\nu}_{k+1}=\frac{\min\set{|1-\lambda\gamma_{k+1}|\,\|P_Ne_k\|,|\lambda\gamma_{k+1}|\,\|P_Ne_{k-1}\|}}{\max\set{|1-\lambda\gamma_{k+1}|\,\|P_Ne_k\|,|\lambda\gamma_{k+1}|\,\|P_Ne_{k-1}\|}}.
\end{align}

Some notation: let $\lambda_{k+1}$ be the value computed by algorithm \ref{alg:gamma-safeguarding-pseudocode} for the chosen parameter $r$ at step $k+1$, $\theta_{k+1}^{\lambda}=\|w_{k+1}-\lambda_{k+1}\gamma_{k+1}(w_{k+1}-w_k)\|/\|w_{k+1}\|$, and  $L_{k+1}^{\lambda}(x,y)=(1-\lambda_{k+1}\gamma_{k+1})x+\lambda_{k+1}\gamma_{k+1}y$. 
We now state the main convergence result. The proof may be found in appendix \ref{appendix-b}.
\begin{theorem}\label{thm:convergence-thm}
     Let $\dim N=1$, and let $\hat{D}_N(x)$ be invertible as a map on $N$ for all $P_N(x-x^*)\neq 0$. Let $W_k=W(\|e_k\|,\sigma_k,x^*)$. If $x_0$ is chosen so that $\sigma_0<\hat{\sigma}$ and $\|e_0\|<\hat{\rho}$, for sufficiently small $\hat{\sigma}$ and $\hat{\rho}$, $x_1=x_0+w_1$, and $x_{k+1}=L_{k+1}^{\lambda}(x_k+w_{k+1},x_{k-1}+w_k)$ for $k\geq 1$, then $W_{k+1}\subset W_0$ for all $k\geq 0$ and $x_k\to x^*$. That is, $\set{x_k}$ remains well-defined and converges to $x^*$. Furthermore, there exist constants $c_4>0$ and $\kappa\in(1/2,1)$ such that 
        \begin{align}
         \|P_Re_{k+1}\|&\leq
           c_4\max\set{|1-\lambda_{k+1}\gamma_{k+1}|\,\|e_k\|^2,|\lambda_{k+1}\gamma_{k+1}|\,\|e_{k-1}\|^2}\label{range-bd-main}\\
          \|P_Ne_{k+1}\|&< \kappa\theta_{k+1}^{\lambda}\|P_Ne_{k}\| \label{null-rate-of-convergence-main}
     \end{align}
for all $k\geq 1$.
\end{theorem}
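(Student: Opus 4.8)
The plan is to prove the theorem by induction on $k$, carrying the joint invariant that $x_{k-1},x_k\in\hat W$ with $\|e_{k-1}\|>\|e_k\|$ and $\sigma_{k-1},\sigma_k<\hat\sigma$, that $(x_k,x_{k-1})$ is a compatible strong N-pair, and that \eqref{range-bd-main}--\eqref{null-rate-of-convergence-main} hold. The hypothesis $\dim N=1$ is used throughout: it makes $\hat W=W(\hat\rho,\hat\sigma,x^*)$ the actual region of invertibility under the stated assumption on $\hat D_N$, and it makes $P_Ne_k$ and $P_Ne_{k-1}$ collinear, so the Newton--Anderson sum $P_Ne_k^{\alpha}=(1-\lambda_{k+1}\gamma_{k+1})P_Ne_k+\lambda_{k+1}\gamma_{k+1}P_Ne_{k-1}$ is a scalar combination whose only route to being small is near-cancellation of two antiparallel vectors -- precisely what $\gamma$-safeguarding rules out. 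For the base case, $x_1=x_0+w_1$ is an ordinary Newton step, so \eqref{keybound1}--\eqref{keybound2} with $\gamma=0$ (equivalently the classical singular-Newton theory of \cite{DeKeKe83,DeKe80}) give $\|P_Ne_1\|\le(\tfrac12+c_1\hat\sigma+c_2(1+\hat\sigma)\hat\rho)\|P_Ne_0\|$ and $\|P_Re_1\|=\bigo(\|e_0\|^2)$, whence $\sigma_1=\bigo(\|e_0\|)<\hat\sigma$ and $\|e_1\|<\|e_0\|<\hat\rho$; since $\sigma_0,\sigma_1$ are small, $(x_1,x_0)$ is a strong N-pair, and compatible by \cref{strong-n-pairs-are-compatible}.

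For the inductive step the range component is immediate: applying $P_R$ to \eqref{newton-anderson-error} kills every term but $P_Rq_{k-1}^k$ because $\mathrm{range}(T_k)\subset N$, which is \eqref{range-bd-main} with $c_4$ depending only on $f$. For the null component I would first invoke \cref{gamma-safeguarding} to pick $\lambda_{k+1}$ so that $\nu_{k+1}\le r<1$ with dominant term $|1-\lambda_{k+1}\gamma_{k+1}|\,\|P_N(e_k+w_{k+1})\|$; transported to $P_Ne_k$ via \eqref{keybound2} (as in the displayed inequalities following \cref{alg:gamma-safeguarding-pseudocode}) this also bounds $\widetilde\nu_{k+1}$ away from $1$, so $\|P_Ne_k^{\alpha}\|\ge(1-r)|1-\lambda_{k+1}\gamma_{k+1}|\,\|P_Ne_k\|$: no catastrophic cancellation occurs, $\tfrac12 P_Ne_k^{\alpha}$ dominates the higher-order terms in \eqref{newton-anderson-error}, and $(x_k,x_{k-1})$ is still a strong N-pair. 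By \cref{strong-n-pairs-are-compatible} the pair is compatible, $\|P_Ne_{k+1}\|\le C\theta_{k+1}^{\lambda}\|w_{k+1}\|$, and the refinement of the compatibility constant in \cref{appendix-b} gives $C<1$ for $\hat\sigma,\hat\rho$ small. Bounding $\|w_{k+1}\|\le(\tfrac12+c_1\sigma_k+c_2(1+\sigma_k)\|e_k\|)\|P_Ne_k\|$ from \eqref{keybound2} and using $\sigma_k<\hat\sigma$, $\|e_k\|<\hat\rho$ then yields \eqref{null-rate-of-convergence-main} with $\kappa=\tfrac12+c_1\hat\sigma+c_2(1+\hat\sigma)\hat\rho\in(\tfrac12,1)$. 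The case $\gamma_{k+1}=0$ or $\gamma_{k+1}\ge1$, where the step is exactly Newton and $\theta_{k+1}^{\lambda}=1$, is covered by the same $\kappa$ directly from \eqref{keybound1}.

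To close the induction it remains to check $x_{k+1}$ lies in the region of invertibility, i.e. $\|e_{k+1}\|<\hat\rho$ and $\sigma_{k+1}<\hat\sigma$, which then gives the nesting $W_{k+1}\subset W_0$ and keeps the next pair well-defined. The bound $\|P_Ne_{k+1}\|<\kappa\theta_{k+1}^{\lambda}\|P_Ne_k\|\le\kappa\|P_Ne_k\|$ together with the quadratic \eqref{range-bd-main} gives $\|e_{k+1}\|^2=\|P_Ne_{k+1}\|^2+\|P_Re_{k+1}\|^2<\|e_k\|^2$, so $\|e_k\|$ is decreasing and stays below $\hat\rho$. For $\sigma_{k+1}$ I would divide the quadratic numerator of \eqref{range-bd-main} by the safeguarded lower bound $\|P_Ne_{k+1}\|\ge(1-r)(\tfrac12-c_1\hat\sigma-c_2(1+\hat\sigma)\hat\rho)|1-\lambda_{k+1}\gamma_{k+1}|\,\|P_Ne_k\|$: on the branch where the numerator is $|1-\lambda_{k+1}\gamma_{k+1}|\,\|e_k\|^2$ this is $\lesssim(1+\hat\sigma)\hat\rho$ using $\|e_k\|\le(1+\sigma_k)\|P_Ne_k\|$, and on the branch $|\lambda_{k+1}\gamma_{k+1}|\,\|e_{k-1}\|^2$ one additionally uses that $\gamma$-safeguarding keeps $|\lambda_{k+1}\gamma_{k+1}|\,\|P_Ne_{k-1}\|/(|1-\lambda_{k+1}\gamma_{k+1}|\,\|P_Ne_k\|)$ bounded together with $\|e_{k-1}\|\le(1+\hat\sigma)\|P_Ne_{k-1}\|$, again $\lesssim(1+\hat\sigma)\hat\rho$; both are $<\hat\sigma$ once $\hat\rho$ is small. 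Finally, iterating \eqref{null-rate-of-convergence-main} with $\theta_{k+1}^{\lambda}\le1$ forces $\|P_Ne_k\|\to0$ geometrically, and then \eqref{range-bd-main} forces $\|P_Re_k\|\to0$, so $x_k\to x^*$.

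I expect the main obstacle to be the bookkeeping in the last step: fixing a single pair $\hat\sigma,\hat\rho$, independent of $k$, for which the safeguarded lower bound on $\|P_Ne_{k+1}\|$ (the very reason $\gamma$-safeguarding is introduced) and the control of the two-step-memory term $|\lambda_{k+1}\gamma_{k+1}|\,\|e_{k-1}\|^2$ in $\sigma_{k+1}$ hold simultaneously, so that $\hat W$ is genuinely invariant and the constants $c_4,\kappa$ are uniform. By comparison, the compatibility statement and the range estimate follow essentially directly from the one-step machinery of \cref{null-comp-and-theta}--\cref{analysis-of-pair-types}.
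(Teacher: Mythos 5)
Your proposal follows essentially the same route as the paper's proof in appendix \ref{appendix-b}: induction on $k$, the base case from classical singular-Newton theory for the step $x_1=x_0+w_1$, \cref{gamma-safeguarding} used to bound the cancellation ratio away from one so that the N-pair term $\tfrac12 P_Ne_k^{\alpha}$ dominates, the bound \eqref{null-rate-of-convergence-main} with $\kappa\in(1/2,1)$ obtained by passing through \eqref{theta-w-expansion} and \eqref{keybound2}, the quadratic bound \eqref{range-bd-main} from applying $P_R$, and control of $\sigma_{k+1}$ via the safeguarded lower bound on $\|P_Ne_{k+1}\|$ to get $W_{k+1}\subset W_0$. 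Your closing step (geometric decay of $\|P_Ne_k\|$ from $\kappa\theta_{k+1}^{\lambda}\le\kappa<1$, then $\sigma_k<\hat{\sigma}$ forces $\|e_k\|\to 0$) is a mild and legitimate simplification of the paper's recursion $\eta_k=c\max\set{\eta_{k-1},\eta_{k-2}}^2\|e_0\|+\psi_k$; note $\theta_{k+1}^{\lambda}\le 1$ does hold because $\lambda_{k+1}\gamma_{k+1}$ lies between $0$ and the least-squares minimizer $\gamma_{k+1}$.

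Two points in your write-up do not stand as stated. First, the monotonicity claim ``\eqref{null-rate-of-convergence-main} together with \eqref{range-bd-main} gives $\|e_{k+1}\|^2<\|e_k\|^2$'' does not follow from those two bounds alone: the range bound carries the memory term $|\lambda_{k+1}\gamma_{k+1}|\,\|e_{k-1}\|^2$, and without relating $\|P_Ne_k\|$ back to $\|P_Ne_{k-1}\|$ this term need not be dominated by $(1-\kappa^2)\|P_Ne_k\|^2+\|P_Re_k\|^2$ (when $\theta_k^{\lambda}$ was small, $\|P_Ne_k\|$ can be far smaller than $\|e_{k-1}\|$). The repair is available in your own toolkit: either invoke the safeguarded ratio bound $|\lambda_{k+1}\gamma_{k+1}|\,\|P_Ne_{k-1}\|\le c\,|1-\lambda_{k+1}\gamma_{k+1}|\,\|P_Ne_k\|$ (which you only use later, for $\sigma_{k+1}$), or establish $\sigma_{k+1}<\hat{\sigma}$ first and conclude $\|e_{k+1}\|\le(1+\hat{\sigma})\|P_Ne_{k+1}\|<(1+\hat{\sigma})\kappa\|e_k\|<\|e_k\|$ once $(1+\hat{\sigma})\kappa<1$; the paper avoids the issue entirely by comparing everything to $\|e_0\|$ through the sequence $\eta_k$ rather than asserting monotone decrease. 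Second, appealing to ``the refinement of the compatibility constant in appendix \ref{appendix-b}'' for $C<1$ is circular, since that appendix is the proof of the theorem itself; \cref{strong-n-pairs-are-compatible} only supplies a moderate constant, so a self-contained proof must derive $\kappa<1$ directly — pull out $\tfrac12 P_Ne_k^{\alpha}$, use the safeguarded lower bound on its norm to absorb the remaining terms into a factor $1+c\max\set{\sigma_{k-1},\sigma_k,\|e_{k-1}\|,\|e_k\|}$, and convert via \eqref{theta-w-expansion} and \eqref{keybound2} — which is exactly the constant $\kappa=\tfrac12+c_1\hat{\sigma}+c_2(1+\hat{\sigma})\hat{\rho}$ you sketch, so this is a citation problem rather than a missing idea.
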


\section{High Order Roots}\label{sec:high_order_roots}
In this section, we generalize the results from the previous sections to higher order roots, which we define now.  Let $p$ denote the smallest integer such that $f^{(p+1)}(x^*)(x-x^*)^p\neq 0$. Here $(x-x^*)^p$ means that the first $p$ arguments of the $(p+1)$-linear map $f^{(p+1)}(x^*)$ are $x-x^*$. The \textit{order} of the root $x^*$ is the smallest integer $d$ such that $P_Nf^{(d+1)}(x^*)(x-x^*)^d\neq 0$. We define the order of $x^*$ this way since $d$ determines the rate of convergence of $P_Nx_k$ when $\{x_k\}$ is generated by Newton's method. With this definition, we can say that the analysis thus far has focused on first order roots. Now we're interested in roots of order greater than one, that is, roots $x^*$ such that $d>1$.  The discussion in section \ref{preliminaries} generalizes in a reasonably straightforward way to higher order roots. See \cite{DeKeKe83} for details on invertibility of $f'(x)$ in this context. We'll continue to assume $\dim N=1$, and we also assume the following throughout this section. Recall that $S$ is the set in which $f'(x)$ is singular. 
\begin{assumption}\label{assump:high-order}
The operator $\hat{D}^{d}(x)=(d!)^{-1}P_Nf^{(d+1)}(x^*)(x-x^*)^{d}P_N$ is invertible as a map on $N$ in the ball $B_R(x^*)\setminus S$, and $R$ is sufficiently small to ensure $f'(x)^{-1}$ exists and $f'(x)^{-1}=\bigo(||x-x^*||^{-d})$. 
\end{assumption}

With this assumption, we have lemma \ref{thm:key-bound-ho}. The expansions in \eqref{null-bound-ho} and \eqref{range-bound-ho} are similar to those found in \cite{DeKeKe83}. Expansion \eqref{key-w-bd-ho} follows from adding \eqref{null-bound-ho} to \eqref{range-bound-ho} and then isolating $w_{k+1}$. We've let $\hat{T}_k(\cdot)$ denote a linear map on $\real^n$ whose bounded independent of $x_k$.
\begin{lemma}\label{thm:key-bound-ho}
Let assumption \eqref{assump:high-order} hold and let $x_k\in B_{\hat{r}}(x^*)\setminus S$. Then 
\begin{align}\label{null-bound-ho}
P_N(e_k+w_{k+1})&=\frac{d}{d+1} P_Ne_k+\hat{T}_kP_Re_k+\bigo(\|e_k\|^2)\\
\label{range-bound-ho}
P_R(e_k+w_{k+1})&=\bigo(\|e_k\|^{d+1})\\ \label{key-w-bd-ho}
w_{k+1}&=-\frac{1}{d+1}P_Ne_k+(\hat{T}_k-I)P_Re_k+\bigo(\|e_k\|^2). 
\end{align}
\end{lemma}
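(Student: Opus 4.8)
The plan is to follow the route the lemma statement itself sketches: establish \eqref{null-bound-ho} and \eqref{range-bound-ho} first, then read off \eqref{key-w-bd-ho} by a one-line manipulation. For the last step, adding \eqref{null-bound-ho} and \eqref{range-bound-ho} and using $P_N v + P_R v = v$ gives
\[
e_k + w_{k+1} = \frac{d}{d+1}P_N e_k + \hat T_k P_R e_k + \bigo(\|e_k\|^2) + \bigo(\|e_k\|^{d+1});
\]
subtracting $e_k = P_N e_k + P_R e_k$ and absorbing $\bigo(\|e_k\|^{d+1})$ into $\bigo(\|e_k\|^2)$, which is legitimate since $d \ge 1$, yields \eqref{key-w-bd-ho} with the same uniformly bounded map $\hat T_k$. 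So the content lies entirely in the first two expansions, which are the order-$d$ analogues of \eqref{initial-keybound1}--\eqref{initial-keybound2}, and I would prove them by carrying the inverse-approximation technique of \cite{DeKeKe83} one rung higher.

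The first ingredient is an exact identity with a Taylor expansion about $x^*$. Rewrite $e_k + w_{k+1} = f'(x_k)^{-1}(f'(x_k)e_k - f(x_k))$; using $f(x^*)=0$ and that the $j=1$ contributions cancel,
\[
e_k + w_{k+1} = f'(x_k)^{-1}\left( \sum_{j=2}^{d+1}\frac{j-1}{j!}f^{(j)}(x^*)(e_k)^j + \bigo(\|e_k\|^{d+2}) \right).
\]
The order-$d$ hypothesis forces $P_N f^{(j)}(x^*) = 0$ for $2 \le j \le d$ (by polarization, since its "diagonal" vanishes identically), so the null component of the Taylor sum comes only from the $j=d+1$ term; splitting $e_k = P_N e_k + P_R e_k$ multilinearly, its pure-null part satisfies
\[
\frac{d}{(d+1)!}\,P_N f^{(d+1)}(x^*)(P_N e_k)^{d+1} = \frac{d}{d+1}\,\hat D^d(x_k)(P_N e_k),
\]
while every other piece of that term carries at least one factor $P_R e_k$, and the $P_R$-component of the Taylor sum is $\bigo(\|e_k\|^2)$, coming from $j=2$.

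The second ingredient is \cref{assump:high-order}, which via the analysis of \cite{DeKeKe83} supplies, for $x_k \in B_{\hat r}(x^*)\setminus S$, an expansion of $f'(x_k)^{-1}$ to the needed order: its null-to-null block is $\hat D^d(x_k)^{-1}$ up to a lower-order correction, its range block is $\bigo(1)$ (the restriction of $f'(x^*)$ to $R$ being invertible since $\real^n = N \oplus R$), and $\|f'(x_k)^{-1}\| = \bigo(\|e_k\|^{-d})$. Applying this to the Taylor sum, $\hat D^d(x_k)^{-1}$ cancels the $\bigo(\|e_k\|^{-d})$ growth against the degree-$d$ map $\hat D^d(x_k)$, so the pure-null part of the $j=d+1$ term becomes exactly $\frac{d}{d+1}P_N e_k$; each term carrying a factor $P_R e_k$ becomes, after composition with $f'(x_k)^{-1}$, linear in $P_R e_k$ with a coefficient bounded uniformly in $k$ (here $\|P_R e_k\|/\|e_k\| \le 1$ is what tames the $\hat D^d(x_k)^{-1}$ blow-up), and these combine into a single bounded linear map $\hat T_k$ applied to $P_R e_k$; the lower-order correction to the inverse applied to the $\bigo(\|e_k\|^{d+1})$ null data, together with the $j \ge d+2$ remainder, is $\bigo(\|e_k\|^2)$. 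This is \eqref{null-bound-ho}. Applying $P_R$ instead, only the nonsingular range block of $f'(x_k)^{-1}$ acts, which together with the structure of the Taylor sum gives \eqref{range-bound-ho}.

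The main obstacle is this second ingredient: making precise the order-$d$ expansion of $f'(x_k)^{-1}$ --- the analogue of the first-order estimate $f'(x)^{-1} = \hat D(x)^{-1} + \bigo(1)$ used in \cref{error-expansion} --- and tracking exactly which powers of $\|e_k\|$ each block of the inverse and each Taylor term contribute, so that the mixed $N$--$R$ cross terms really collapse into a single uniformly bounded $\hat T_k$ and the remainders are no coarser than $\bigo(\|e_k\|^2)$ and $\bigo(\|e_k\|^{d+1})$ respectively. This is precisely the bookkeeping done in \cite{DeKeKe83}; a self-contained argument would restate their inverse estimate in the present notation, and otherwise \eqref{null-bound-ho}--\eqref{range-bound-ho} can be cited from there, as the discussion following the lemma statement notes.
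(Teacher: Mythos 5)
Your proposal matches the paper's treatment: the paper likewise takes \eqref{null-bound-ho} and \eqref{range-bound-ho} as the order-$d$ analogues of the expansions in \cite{DeKeKe83} (citing that work rather than re-deriving the inverse estimates), and obtains \eqref{key-w-bd-ho} exactly as you do, by adding the two expansions and isolating $w_{k+1}$ after writing $e_k = P_Ne_k + P_Re_k$. Your additional sketch of the Taylor-plus-approximate-inverse bookkeeping is consistent with that cited argument, so there is nothing to correct.
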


Applying lemma \eqref{thm:key-bound-ho} to a Newton-Anderson step yields 
\begin{align}\label{na-null-bd-ho}
P_Ne_{k+1}=\bigg(\frac{d}{d+1}\bigg) P_Ne_k^{\alpha}+(\hat{T}_kP_Re_k)^{\alpha}+q_{k-1}^k.
\end{align}

\begin{lemma}
Under the assumptions of lemma \eqref{thm:key-bound-ho}, strong N-pairs are automatically compatible. Further, writing $x_{k+1}=(1-\gamma_{k+1})(x_k+w_{k+1})+\gamma_{k+1}(x_{k-1}+w_k)$ and $e_k=x_k-x^*$, the following bound holds
\begin{align}\label{null-error-ho}
\|P_Ne_{k+1}\|\leq \theta_{k+1}\bigg(\frac{d}{d+1}\bigg)\bigg(\frac{1+r_{k+1}^e}{1-r_{k+1}^e}\bigg)\|P_Ne_k\|.
\end{align}
\end{lemma}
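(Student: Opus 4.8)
The plan is to mimic, essentially verbatim, the first-order argument: the compatibility of strong N-pairs was proven in \cref{strong-n-pairs-are-compatible} using only (i) the structure of the error expansion \eqref{newton-anderson-error} (namely that the Newton-Anderson sum of the null components is the dominant term $E_{k+1}$), and (ii) the matching structure of the $w_{k+1}^\alpha$ expansion \eqref{theta-w-expansion}, which let us invoke \cref{compatbility-criteria-remark} by choosing $\widetilde F_{k+1} = -E_{k+1}$. In the high-order setting the analogous expansions are \eqref{na-null-bd-ho} for $P_Ne_{k+1}$ and the Newton-Anderson sum of \eqref{key-w-bd-ho} for $P_Nw_{k+1}^\alpha$; the only change is that the coefficient of $P_Ne_k$ is $d/(d+1)$ in the first and $-1/(d+1)$ in the second, rather than $\pm 1/2$. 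So I would first write out $x_{k+1} = (1-\gamma_{k+1})(x_k+w_{k+1})+\gamma_{k+1}(x_{k-1}+w_k)$, subtract $x^*$, and apply \eqref{null-bound-ho} and \eqref{key-w-bd-ho} term by term, grouping the $\bigo$-terms into $q_{k-1}^k$, exactly as in the proof of the proposition preceding \cref{pair-types}, to obtain \eqref{na-null-bd-ho} and the companion
\[
P_Nw_{k+1}^\alpha = -\frac{1}{d+1}P_Ne_k^\alpha + (\hat T_k P_Re_k)^\alpha + q_{k-1}^k.
\]

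Next, for a strong N-pair the dominant term in \eqref{na-null-bd-ho} is $E_{k+1} = \tfrac{d}{d+1}P_Ne_k^\alpha$. I would set $\widetilde F_{k+1} = -\tfrac{1}{d+1}P_Ne_k^\alpha$, observe that the two expansions then have identical ``remainder'' parts $(\hat T_k P_Re_k)^\alpha + q_{k-1}^k$, so that $\|P_Ne_{k+1}-E_{k+1}\| = \|P_Nw_{k+1}^\alpha - \widetilde F_{k+1}\|$, and note $\widetilde F_{k+1}\in S_{k+1}^w$. This is slightly weaker than the first-order case, since $\|\widetilde F_{k+1}\| = \tfrac{1}{d}\|E_{k+1}\|$ rather than $\|E_{k+1}\|$; but \cref{compatbility-criteria-remark} only needs an element of $S_{k+1}^w$ matching $E_{k+1}$ in the way that yields $r_{k+1}^e < 1 \Rightarrow \eqref{compatibility-condition-general}$, and the computation
\[
\|P_Nw_{k+1}^\alpha\| \ge \|\widetilde F_{k+1}\| - \|P_Nw_{k+1}^\alpha - \widetilde F_{k+1}\| = \tfrac{1}{d}\|E_{k+1}\| - r_{k+1}^e\|E_{k+1}\|
\]
is not quite what is needed. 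So the cleaner route is to apply \cref{theta-controls-error-prop} directly: add and subtract $E_{k+1}$ to get $\|P_Ne_{k+1}\| \le (1+r_{k+1}^e)\|E_{k+1}\| = (1+r_{k+1}^e)\tfrac{d}{d+1}\|P_Ne_k^\alpha\|$, then bound $\|P_Ne_k^\alpha\| = \|P_Nw_{k+1}^\alpha + \tfrac{1}{d+1}P_Ne_k^\alpha - (\hat T_kP_Re_k)^\alpha - q_{k-1}^k\|\cdot\tfrac{d+1}{d}$ — more efficiently, use the reverse triangle inequality on the companion $w$-expansion to get $\|P_Ne_k^\alpha\| \le (d+1)\|P_Nw_{k+1}^\alpha\|/(1-r_{k+1}^e)$ (the remainder terms there are again controlled by $r_{k+1}^e\|E_{k+1}\|$, hence by $r_{k+1}^e\cdot\tfrac{d}{d+1}\|P_Ne_k^\alpha\|$, which is $\le r_{k+1}^e\|P_Ne_k^\alpha\|$). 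Chaining these and using $\|P_Nw_{k+1}^\alpha\| \le \|w_{k+1}^\alpha\| = \theta_{k+1}\|w_{k+1}\|$ gives \eqref{null-error-ho}; compatibility is then immediate from \cref{def:compatible} since $(d/(d+1))(1+r_{k+1}^e)/(1-r_{k+1}^e)$ is a moderate constant independent of $k$ (as $r_{k+1}^e$ is bounded away from $1$).

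The main obstacle, and the only place where care is genuinely needed, is the bookkeeping around the factor $d/(d+1)$ versus $1/(d+1)$: one must verify that the ``dominant term matches remainder'' mechanism of \cref{compatbility-criteria-remark} still goes through, or else fall back on the direct estimate via \cref{theta-controls-error-prop} as sketched above, being careful that the remainder terms in the $w$-expansion are bounded by $r_{k+1}^e$ times the correct quantity. Everything else — the term-by-term expansion, the definition of $q_{k-1}^k$, the identification of the dominant term for an N-pair — is an exact transcription of the first-order arguments in \cref{err-exp} and \cref{strong-n-pairs-are-compatible}, with $1/2$ replaced by $d/(d+1)$ throughout.
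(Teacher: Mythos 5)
Your fallback argument is essentially the paper's own proof: for a strong N-pair, add and subtract $E_{k+1}=\tfrac{d}{d+1}P_Ne_k^{\alpha}$ to get $\|P_Ne_{k+1}\|\leq(1+r_{k+1}^e)\tfrac{d}{d+1}\|P_Ne_k^{\alpha}\|$, control $\|P_Ne_k^{\alpha}\|$ via the reverse triangle inequality on the $P_Nw_{k+1}^{\alpha}$ expansion so that $\theta_{k+1}\|w_{k+1}\|$ appears, and then apply \eqref{key-w-bd-ho} once more to trade $\|w_{k+1}\|$ for $\tfrac{1}{d+1}\|P_Ne_k\|$ --- this last conversion is stated explicitly in the paper's proof and is left implicit in your chain, so you should spell it out. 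Your observation that \cref{compatbility-criteria-remark} does not transfer verbatim (since $\|\widetilde F_{k+1}\|=\tfrac1d\|E_{k+1}\|\neq\|E_{k+1}\|$) is correct and is precisely why the direct estimate is used; the only other caveat is that exact bookkeeping of the remainder term yields a denominator $1-d\,r_{k+1}^e$ (or $1-(d+1)r_{k+1}^e$ with your loosening) rather than $1-r_{k+1}^e$, a looseness shared by the paper, which covers it by assuming $r_{k+1}^e\ll1$.
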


\begin{proof}
Assume $r_{k+1}^e<<1$. Then if $(x_k,x_{k-1})$ is a strong N-pair, we have 
\begin{align}
\|P_Ne_{k+1}\|\leq (1+r_{k+1}^e)\bigg(\frac{d}{d+1}\bigg)\|P_Ne_k^{\alpha}\|.
\end{align}
By lemma \eqref{thm:key-bound-ho}, it follows that 
\begin{align}
\|P_Ne_{k+1}\|\leq d\bigg(\frac{1+r_{k+1}^e}{1-r_{k+1}^e}\bigg)\|w_{k+1}^{\alpha}\|= d\theta_{k+1}\bigg(\frac{1+r_{k+1}^e}{1-r_{k+1}^e}\bigg)\|w_{k+1}\|.
\end{align}
Applying \eqref{key-w-bd-ho} completes the proof.
\end{proof}

While $d$ may grow very large for very high-order roots, we still feel it's reasonable to say that strong N-pairs are automatically compatible because the final bound on $P_Ne_{k+1}$ seen in \eqref{null-error-ho} contains $d(d+1)^{-1}$ rather than $d$ alone. This ensures that $\|P_Ne_{k+1}\|=\bigo(\theta_{k+1}\|P_Ne_k\|)$ where the constant is of moderate size. 
Thus $\theta_{k+1}$ can lead to significant acceleration just as in the order one case. It's worth noting that this acceleration can occur without any a priori knowledge of the order of the root. Theorem \ref{thm:convergence-ho} is the generalization of \ref{thm:convergence-thm} for higher order roots. The proofs are nearly identical due to the similar structure of \ref{null-bound-ho} and \ref{keybound1}, and therefore we simply state \ref{thm:convergence-ho}. Analogously to the first order root case, we let $\hat{D}^{d}_N(x)=(d!)^{-1}P_Nf^{(d+1)}(x^*)P_N(x-x^*)^{d}P_N$.

\begin{theorem}\label{thm:convergence-ho}
Let $\dim N=1$, and let $\hat{D}^{d+1}_N(x)$ be invertible as a map on $N$ for all $P_N(x-x^*)\neq 0$. Let $W_k=W(\|e_k\|,\sigma_k,x^*)$. If $x_0$ is chosen so that $\sigma_0<\hat{\sigma}$ and $\|e_0\|<\hat{\rho}$, for sufficiently small $\hat{\sigma}$ and $\hat{\rho}$, $x_1=x_0+w_1$, and $x_{k+1}=L_{k+1}^{\lambda}(x_k+w_{k+1},x_{k-1}+w_k)$ for $k\geq 1$, then $W_{k+1}\subset W_0$ for all $k\geq 0$ and $x_k\to x^*$. That is, $\set{x_k}$ remains well-defined and converges to $x^*$. Furthermore, there exist constants $c>0$ and $\kappa\in(d/(d+1),1)$ such that 
\begin{align}
\|P_Re_{k+1}\|&\leq c\max\{|1-\lambda_{k+1}\gamma_{k+1}|\,\|e_k\|^{d+1},|\lambda_{k+1}\gamma_{k+1}|\,\|e_{k-1}\|^{d+1}\}\\ \|P_Ne_{k+1}\|&\leq \kappa\theta_{k+1}^{\lambda}\|P_Ne_k\|.  \end{align}
\end{theorem}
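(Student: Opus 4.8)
The plan is to mirror, almost line for line, the argument that proves \cref{thm:convergence-thm} in appendix \ref{appendix-b}, exploiting the structural identity between the higher-order null expansion \eqref{null-bound-ho} and the first-order expansion \eqref{keybound1}: the only difference is that the coefficient $1/2$ in front of $P_Ne_k$ is replaced by $d/(d+1)$, and the error on the range component now enters at order $\|e_k\|^{d+1}$ rather than $\|e_k\|^2$. Since $d/(d+1)\in[1/2,1)$ for all $d\geq 1$, every inequality used in the first-order proof in which $1/2$ appeared as a contraction factor survives with $1/2$ replaced by the (still strictly-less-than-one) constant $d/(d+1)$; this accounts for the stated $\kappa\in(d/(d+1),1)$ in place of $\kappa\in(1/2,1)$. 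First I would set up the induction on $k$: assume $x_i\in W_0=W(\|e_0\|,\sigma_0,x^*)$ for $i\leq k$ (so in particular $f'(x_i)$ is invertible and $\sigma_i<\hat\sigma$, $\|e_i\|<\hat\rho$), and show $x_{k+1}\in W_0$, which amounts to establishing \eqref{range-bd-main}'s analogue and the angle bound $\|P_Re_{k+1}\|<\sigma_0\|P_Ne_{k+1}\|$ built from it.

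The key steps, in order: (1) Establish the range bound. Apply $P_R$ to the Newton-Anderson version of \eqref{range-bound-ho} — that is, $e_{k+1}=(1-\gamma_{k+1})(e_k+w_{k+1})+\gamma_{k+1}(e_{k-1}+w_k)$ — and use \eqref{range-bound-ho} at steps $k$ and $k-1$ to get $\|P_Re_{k+1}\|\leq c\max\{|1-\lambda_{k+1}\gamma_{k+1}|\,\|e_k\|^{d+1},|\lambda_{k+1}\gamma_{k+1}|\,\|e_{k-1}\|^{d+1}\}$, exactly as in \eqref{range-comp-bound} but with exponent $d+1$. (2) Establish the null-space rate. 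Because $\dim N=1$, $\gamma$-safeguarding (Lemma \ref{gamma-safeguarding}, whose proof only used \eqref{keybound1}–\eqref{keybound2} type bounds, which have higher-order counterparts from \eqref{null-bound-ho} and \eqref{key-w-bd-ho}) guarantees a $\lambda_{k+1}\in(0,1]$ making $\nu_{k+1}\leq r<1$ and making the min in \eqref{eta-notation} the $(k-1)$-term; hence the pair $(x_k,x_{k-1})$, being near $N$, is a strong N-pair with $r_{k+1}^e$ bounded away from one. Invoke the higher-order strong-N-pair bound \eqref{null-error-ho}, which gives $\|P_Ne_{k+1}\|\leq \theta_{k+1}^\lambda\big(d/(d+1)\big)\big((1+r_{k+1}^e)/(1-r_{k+1}^e)\big)\|P_Ne_k\|$. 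The content of $\gamma$-safeguarding is precisely that $r_{k+1}^e$ — controlled by $\nu_{k+1}$ plus the $\sigma_i,\|e_i\|$ terms, which are $\leq\hat\sigma,\hat\rho$ — can be made small enough that $\big(d/(d+1)\big)\big((1+r_{k+1}^e)/(1-r_{k+1}^e)\big)<\kappa<1$ for some $\kappa$ just above $d/(d+1)$, uniformly in $k$, when $\hat\sigma,\hat\rho$ are small. (3) Close the induction on membership in $W_0$: from $\|P_Ne_{k+1}\|\leq\kappa\theta_{k+1}^\lambda\|P_Ne_k\|\leq\|P_Ne_k\|$ and the range bound of step (1), deduce $\sigma_{k+1}=\|P_Re_{k+1}\|/\|P_Ne_{k+1}\|\leq c\max\{\|e_k\|,\|e_{k-1}\|\}^d/\kappa\cdot(\text{stuff})<\hat\sigma$ and $\|e_{k+1}\|\leq(1+\sigma_{k+1})\|P_Ne_{k+1}\|<\|e_k\|<\hat\rho$, so $x_{k+1}\in W_0$ and $W_{k+1}\subset W_0$. (4) Conclude $\|P_Ne_k\|\to 0$ geometrically from $\prod\kappa\theta_j^\lambda$, hence $\|P_Re_k\|\to 0$ by the range bound, hence $x_k\to x^*$.

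The main obstacle I expect is step (2)'s uniformity claim: one must verify that the constant $\kappa$ can be chosen independent of $k$ and of $d$-dependence in a controlled way — i.e., that the $\gamma$-safeguarding parameter $r$ and the smallness of $\hat\sigma,\hat\rho$ can be fixed once and for all so that $r_{k+1}^e$ stays in a range forcing $\big(d/(d+1)\big)\big((1+r_{k+1}^e)/(1-r_{k+1}^e)\big)\leq\kappa$ at every step. This requires re-examining the proof of Lemma \ref{gamma-safeguarding} to confirm that its higher-order analogue (with $1/2\mapsto d/(d+1)$ throughout \eqref{nu-apprx}–\eqref{e-by-w}, and $\|T_k\|,\|T_k-I\|$ replaced by bounds on $\hat T_k,\hat T_k-I$) still yields $\lambda\in(0,1]$; since $d/(d+1)$ is bounded away from both $0$ and $1$ for each fixed root, and $\|\hat T_k\|$ is bounded by Assumption \ref{assump:high-order}, the same case analysis ($\gamma_{k+1}>0$ versus $\gamma_{k+1}<0$, $\beta_{k+1}\gtrless 1$) goes through verbatim. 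Everything else — the pair-type bookkeeping, the reverse-triangle-inequality manipulations, the telescoping of $\sigma_k$ and $\|e_k\|$ — is mechanically identical to the first-order case, which is exactly why the authors state they "simply state" \cref{thm:convergence-ho} without reproducing the proof.
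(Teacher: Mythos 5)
Your proposal is correct and takes essentially the same route as the paper: the authors justify \cref{thm:convergence-ho} only by noting (end of \cref{appendix-b}) that the proof of \cref{thm:convergence-thm} carries over once the coefficient $1/2$ is replaced by $d/(d+1)$ and the range-component error by $\bigo(\|e_k\|^{d+1})$, with $\gamma$-safeguarding and the $W_k\subset W_0$ induction unchanged, which is exactly your plan. The one loose spot in your sketch is the $\sigma_{k+1}$ estimate, where $\|P_Ne_{k+1}\|$ must be bounded \emph{below} via the reverse triangle inequality together with the safeguarding bound $\nu_{k+1}\leq r<1$ (as the paper does for $\sigma_2$), not via the upper bound involving $\kappa\theta_{k+1}^{\lambda}$ --- but the mirrored Appendix~B argument supplies precisely this, so no genuine gap results.
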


\section{Numerical Results}\label{numerical-results}
In this section we compare the performance of a few variations of Newton-Anderson 
(N.Anderson), including standard N.Anderson, and the projected Levenberg-Marquardt method from \cite{KaYaFu04}. The variations on N.Anderson are $\gamma$-N.Anderson, that is, N.Anderson with $\gamma$-safeguarding from algorithm \ref{alg:gamma-safeguarding-pseudocode}, N.Anderson with an Armijo linesearch (see, for instance, \cite{Armijo66}), and $\gamma$-Armijo-N.Anderson, that is, we use algorithm \ref{alg:gamma-safeguarding-pseudocode} in tandem with a linesearch. The linesearch is only carried out if a given step fails to reduce the residual by a factor of 0.99. It should be noted that in practice, $\gamma$-safeguarding does not appear necessary for convergence, as is demonstrated below. Though it can recover and even improve convergence for some problems. 
There is only one problem for which N.Anderson did not converge. Both Armijo-N.Anderson and $\gamma$-N.Anderson converged well for that problem.

Our first two test problem feature nonlinear systems of order $n = 10^4$: 
the Chandrasekhar H-equation from \cite{chandra60}, and a modified version of the multivariate polynomial seen in example three on page 45 of \cite{LiZhi22}. The Chandrasekhar H-equation is a familiar benchmark problem from the literature concerning Newton's method, in particular Newton's method in the presence of singularities. The multivariate polynomial is a scaleable singular problem where we can easily adjust the order of the root in order to demonstrate the theory for higher order roots. Here we mean the order of the root as defined in \ref{sec:high_order_roots}. We then apply the methods to various smaller-scale benchmark problems, both singular and nonsingular, from the literature. All test problems are square systems. 

All computations were performed in Octave on an M1 Macbook Pro. The iterations were terminated when $||f(x_k)||<10^{-8}$ or the number of iterations exceeded fifty, in which case we say the algorithm failed to converge. The parameters for the projected Levenberg-Marquardt method are mostly those of \cite{KaYaFu04}. Our implementation differs from theirs in that we terminate when $\|f(x_k)\|<10^{-8}$, and our Armijo linesearch parameters may differ. In our search, the step size is $1/2$ and the damping parameter is $10^{-4}$. Thus we seek the smallest $j$ such that $g(x+(1/2)^jd)\leq g(x)+10^{-4}(1/2)^j g'(x)d$, where $g(x)=\|f(x)\|^2$ and $d$ is the search direction. For the N.Anderson method and variants, $d_k = w_{k+1}-\gamma_{k+1}(x_k-x_{k-1}+w_{k+1}-w_k)$, and we scale the step size 1/2 by 3/10 at each iteration for each problem except for Dayton10. That is, we seek the smallest $j$ such that $g(x+(1/2)(3/10)^jd)\leq g(x)+10^{-4}(1/2)(3/10)^jg'(x)d$. For Dayton10, we took the step size to be 4/5 and scaled by 3/10 at each iteration. When we want to emphasize the $\gamma$-safeguarding parameter $r$, we write $\gamma$-N.Anderson($r$).  

\subsection{Singular Problems}
\subsubsection{The Chandrasekhar H-Equation}
The Chandrasekhar H-equation is an important benchmark problem for Newton and Newton-like methods (see \cite{DeKe82,Ke18,KeSu83,posc20}) defined by the integral equation
\begin{align}\label{fun:chh}
    F(H)(\mu):=H(\mu)-\bigg(1-\frac{\omega}{2}\int_0^1 \frac{\mu H(\nu)\,d\nu}{\mu+\nu}\bigg)^{-1}=0.
\end{align}
As discussed in \cite{Ke18}, \cref{fun:chh} 
admits real solutions for parameter $\omega \in [0,1]$. The derivative
at each solution is invertible for $\omega \in [0,1)$, and features a one dimensional
nullspace at the bifurcation point, $\omega = 1$.
As shown in \cite{DeKe82}, \eqref{fun:chh} with $\omega = 1$
satisfies the assumptions of \cref{thm:convergence-thm}. 
Following \cite{Ke18}, we discretize  \eqref{fun:chh}
by the composite midpoint rule with $n = 10^4$ nodes, yielding a 
discrete system in $\mathbb{R}^n$. We take $x^0$ to be the vector of ones. Below are the results. 

\begin{figure}[H]
	\centering
	\includegraphics[width=61mm]{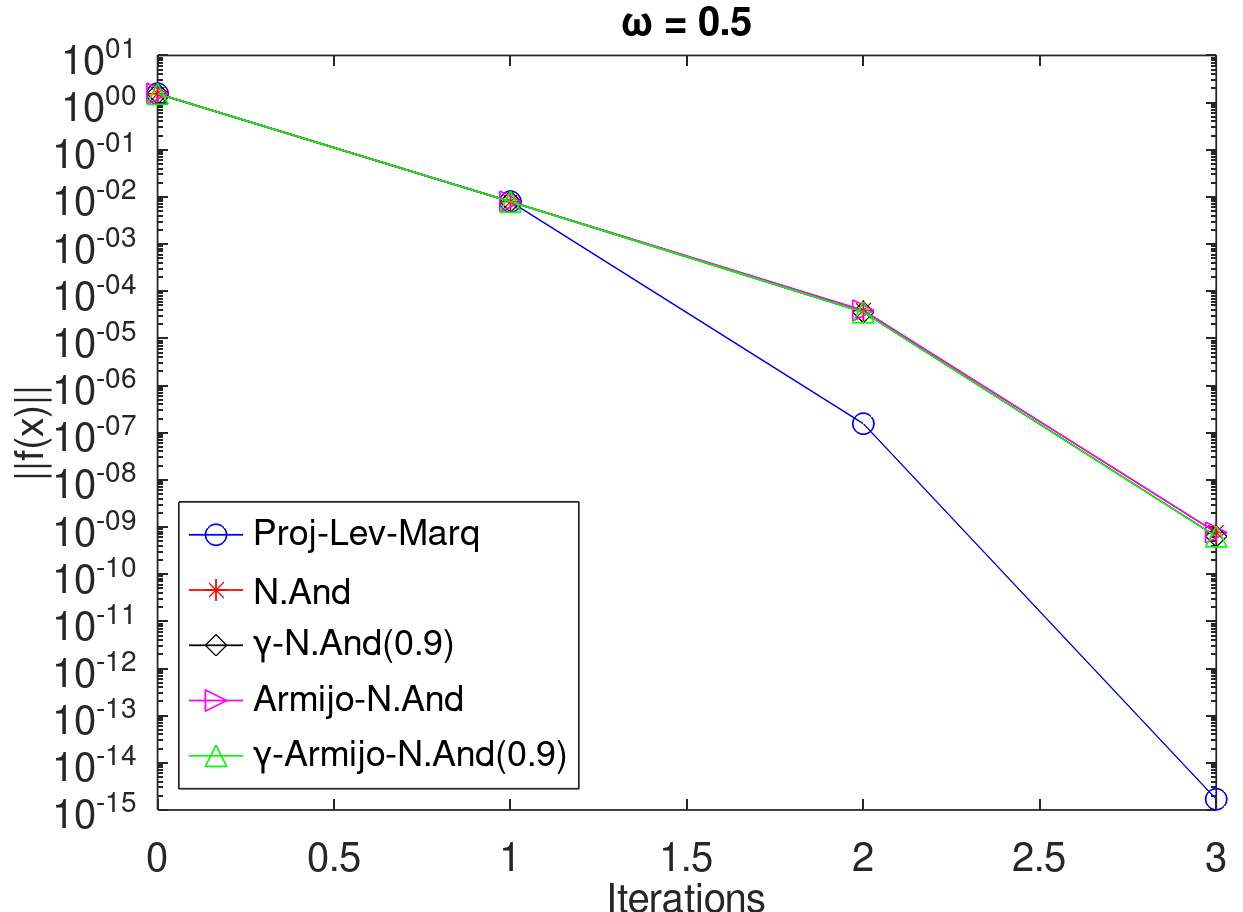}
	\includegraphics[width=61mm]{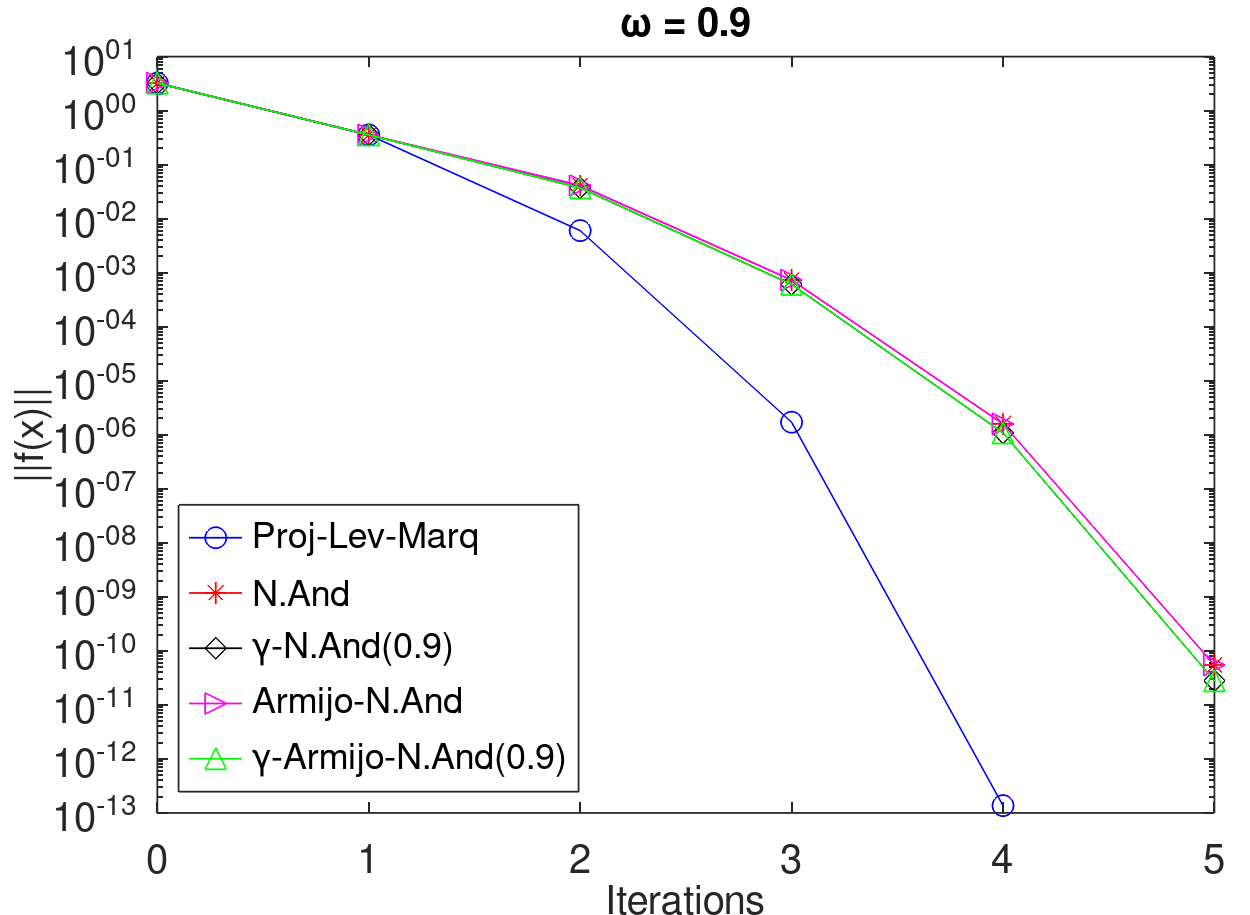}
	\caption{Left: Residual history for the Chandrasekhar H-equation with parameter $\omega=0.5$. Right: Residual history for Chandrasekhar H-equation with $\omega=0.9$.}
\end{figure}

\begin{figure}[H]
	\centering
	\includegraphics[width=61mm]{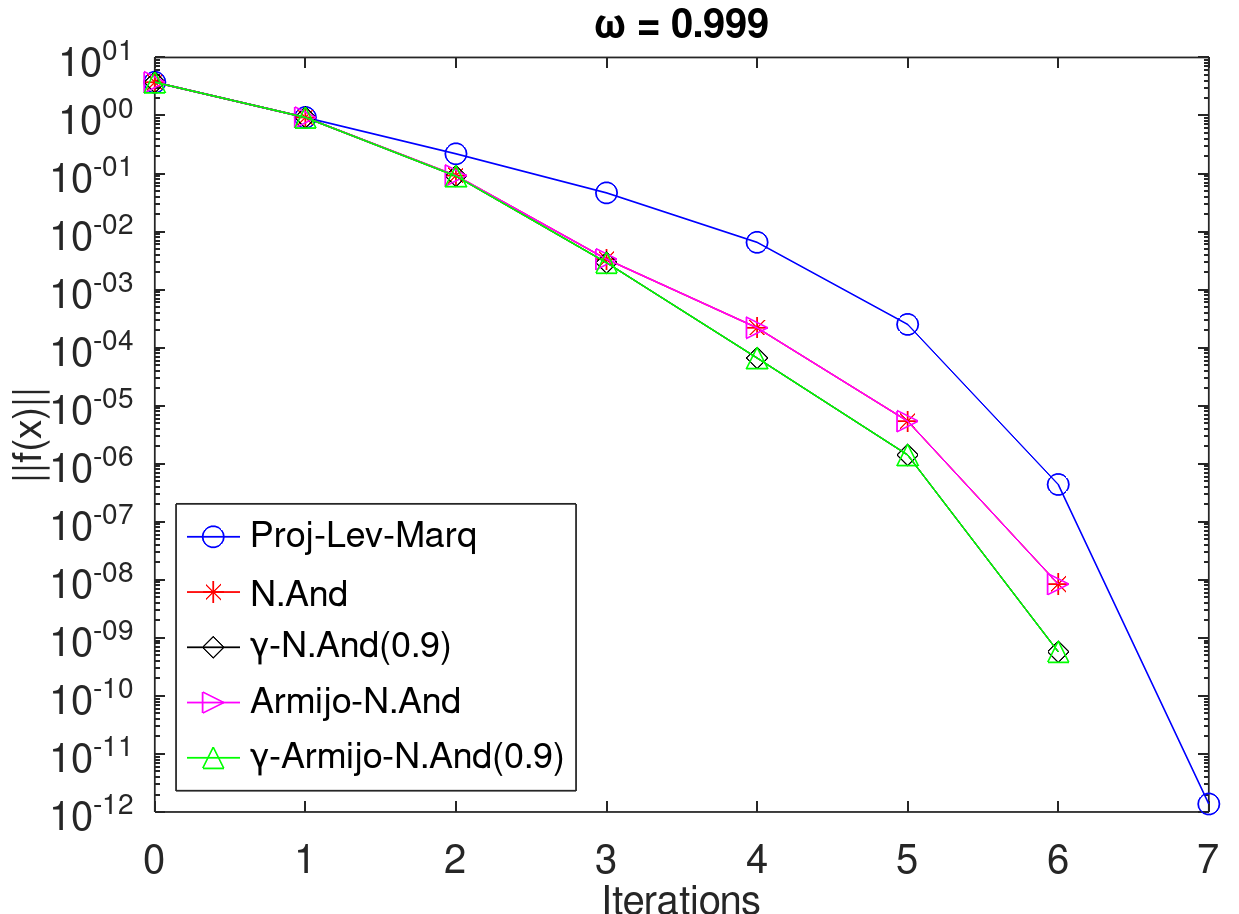}
	\includegraphics[width=61mm]{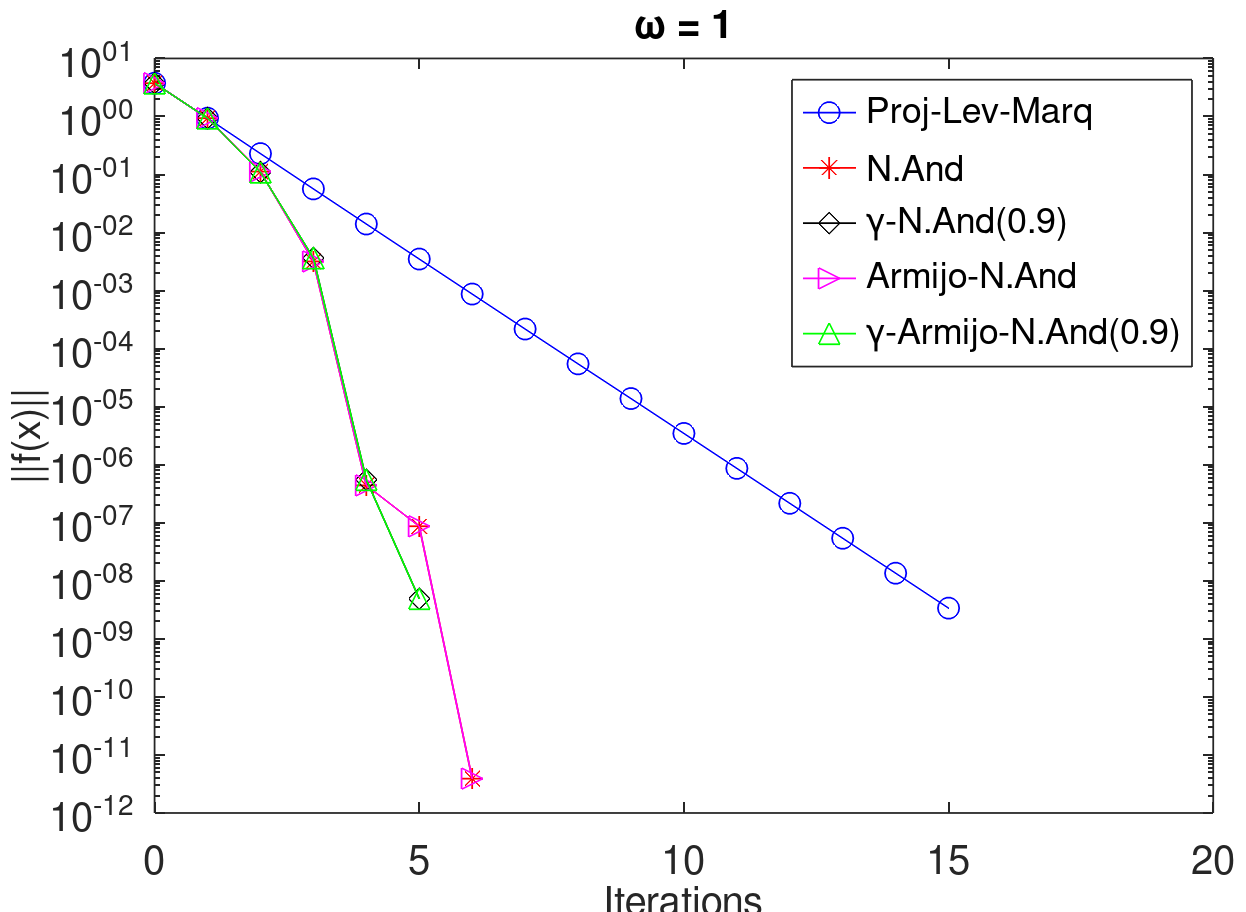}
	\caption{Left: Residual history for the Chandrasekhar H-equation with parameter $\omega=0.999$. Right: Residual history for Chandrasekhar H-equation with $\omega=1$.}
\end{figure}

No line search steps were taken for the N.Anderson variations. Consequently, the residual history for N.Anderson and Armijo-N.Anderson are identical, as are $\gamma$-N.Anderson and $\gamma$-Armijo-N.Anderson. We obesrve that 
Projected-Levenberg-Marquardt
converges moderately faster than the N.Anderson variations up to 
$\omega = 0.999$, however the iteration count to converge to the given tolerance varies by only one. For $\omega = 1$ the problem is singular, and the N.Anderson variations
display substantially better performance.
For each case, the variatons of N.Anderson all perform similarly
to each other with $\gamma$-N.Anderson slightly outperforming N.Anderson for $\omega=0.999$. For comparison, we note that from the same initial iterate, the standard Newton method converged in 4 iterations for $\omega=0.5$, 5 for $\omega=0.9$, 8 for $\omega=0.999$, and 17 for $\omega=1$.

\subsubsection{A Multivariate Polynomial}
Here we apply the methods mentioned at the beginning of this section to 
\begin{align}\label{fun:multipoly}
\scalebox{0.93}{
	$f(x_1,...,x_n) = (x_1^2+x_1-x_2^k,x_2^2+x_2-x_3^k,...,x_{n-1}^2+x_{n-1}-x_n^k,x_n^k), \hspace{1em} n=10^4.$}
\end{align}
The zero vector is a root of order $k-1$. That is, $P_N f^{(j)}(x^*)=0$ for $j=0,1,...,k-1$, and is nonzero for $j=k$. Here $x^*$ is the zero vector. We choose $x^0$ such that $x^0_n = 0.9$, and $x^0_j=0.3$ for all $j=1,2,...,n-1$. 
\begin{figure}[H]
	\centering
	\includegraphics[width=41mm]{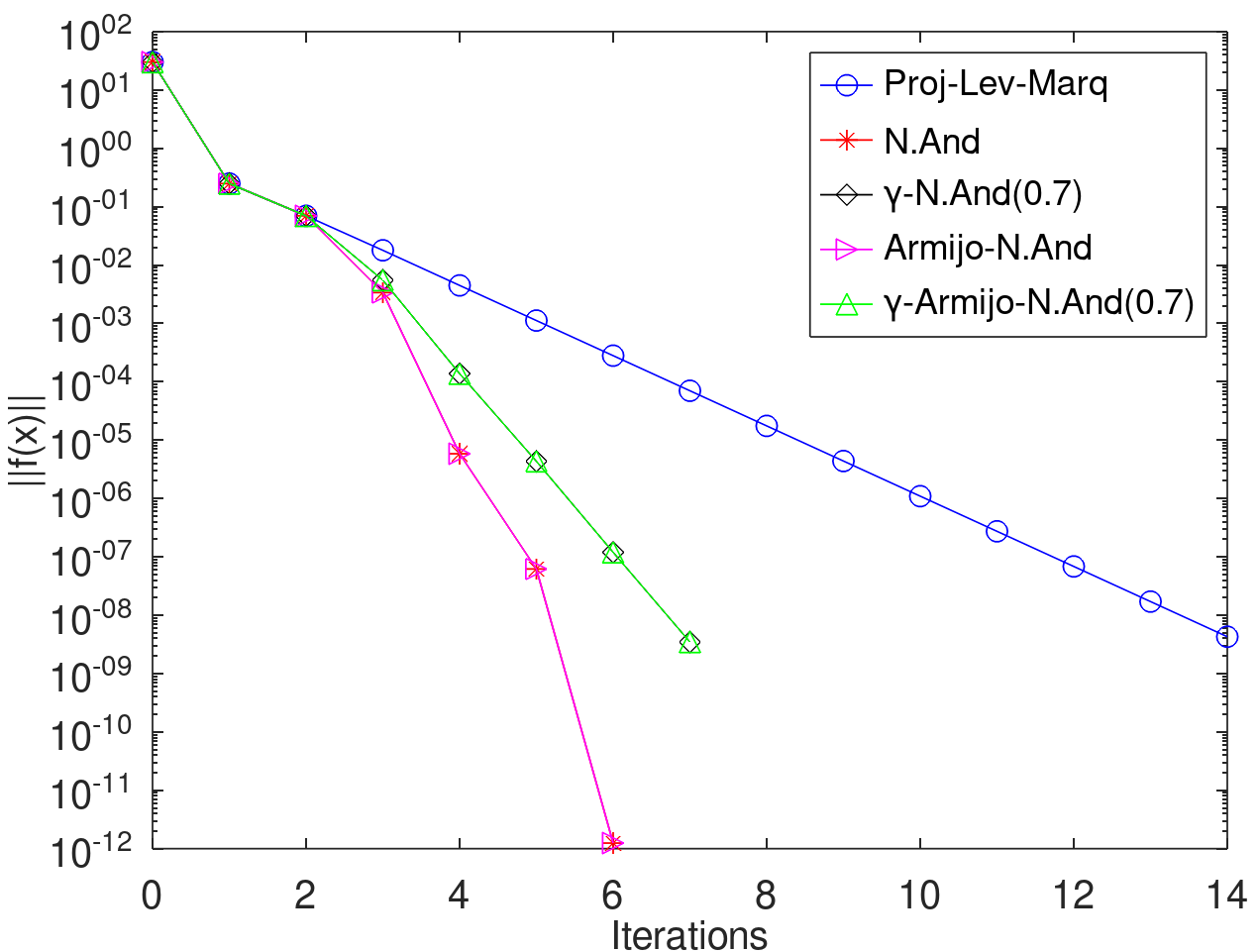}
	\includegraphics[width=41mm]{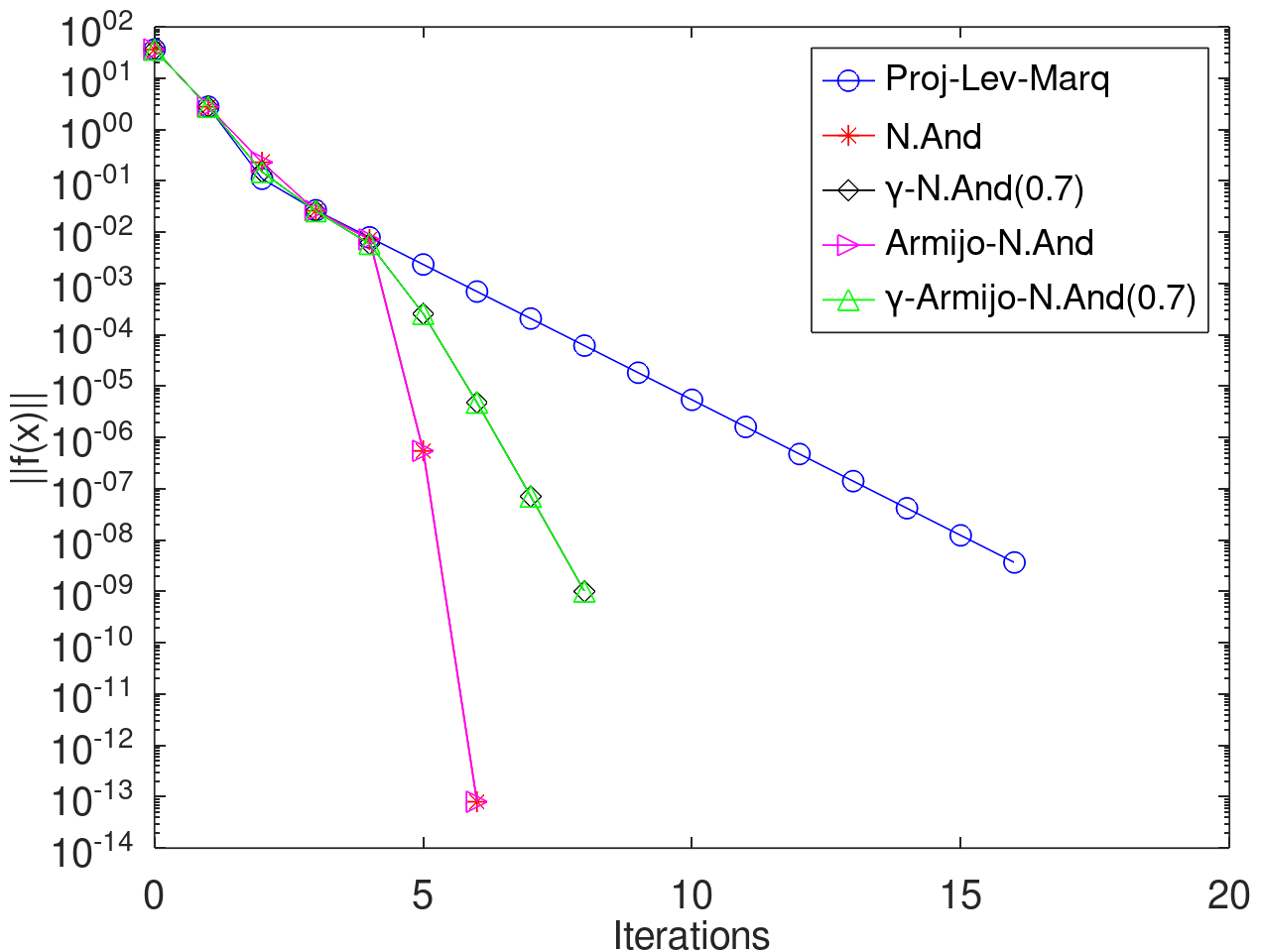}
	\includegraphics[width=41mm]{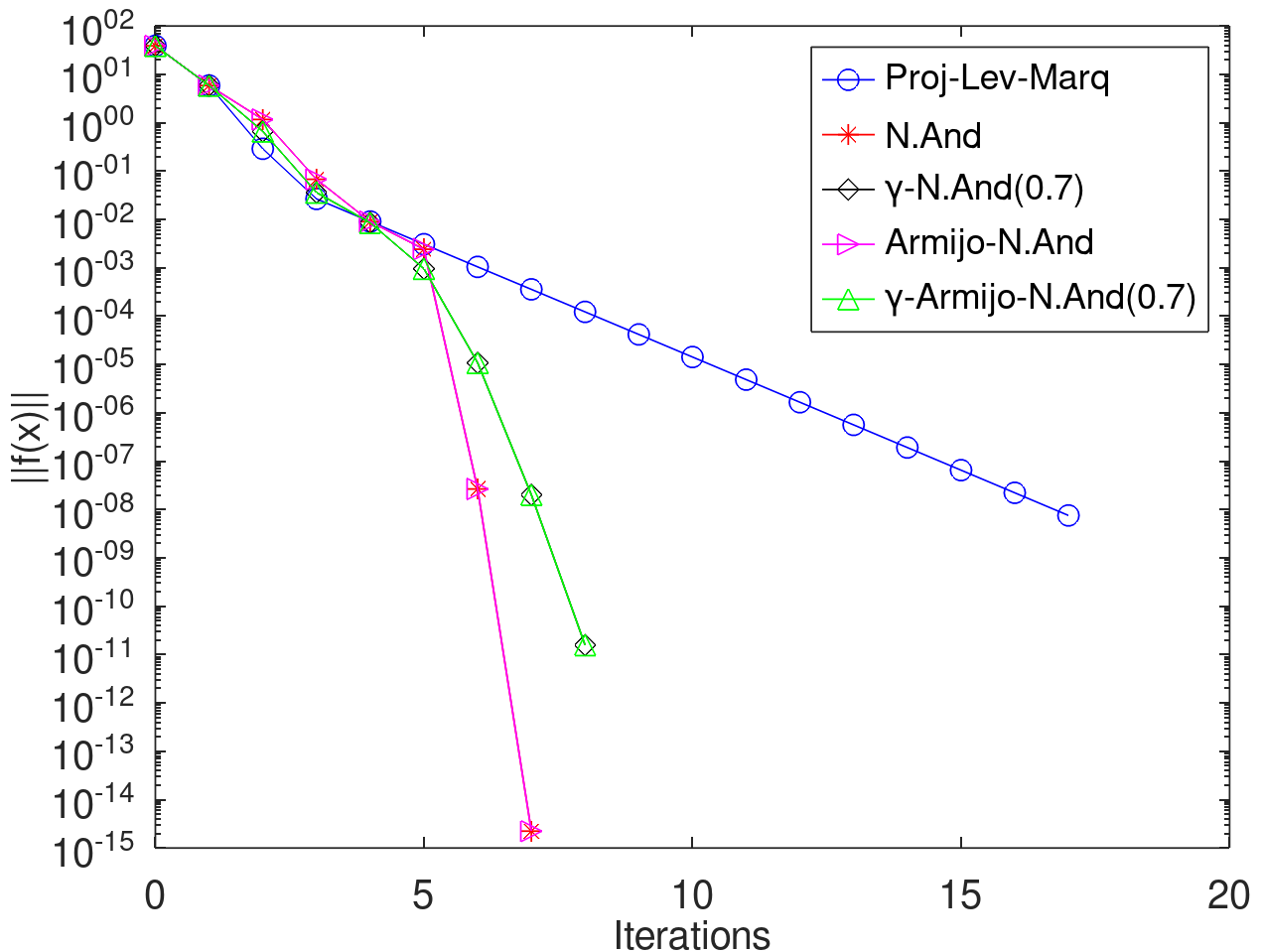}
	\caption{\label{fig:himultipoly}Left: Residual history for $k=2$, in which case the zero vector is a root of order one. Middle: Residual history for $k=3$. Right: Residual history for $k=7$.}
\end{figure}

The results in figure \ref{fig:himultipoly} demonstrate the efficacy of N.Anderson and its variations in the presence of higher order roots. As indicated in the legends, the result of running $\gamma$-N.Anderson with $\gamma$-safeguarding parameter $r=0.7$ are shown. We note that upon setting $r=0.9$, the convergence was very similar to that of standard N.Anderson. No line search steps were taken for the N.Anderson variations. Consequently, the residual history for N.Anderson and Armijo-N.Anderson are identical, as are $\gamma$-N.Anderson and $\gamma$-Armijo-N.Anderson. The Projected-Levenberg-Marquardt method is seen to converge linearly, and converge slightly slower as the order of the root increases. The convergence of the variations of N.Anderson, however, is less affected by the order of the root. With $\gamma$-safegurading parameter $r=0.7$, $\gamma$-N.Anderson converges slower than standard N.Anderson, but it still exhibits fast convergence compared to the Projected-Levenberg-Marquardt method as the order of the root increases. From the same initial iterate, the standard Newton method converged in 15 iterations when $k=2$, 17 when $k=3$, and 18 when $k=7$.

\subsection{Small-Scale Benchmark Problems}
Numerical results for the small-scale problems can be found in appendix \ref{appendix-a}.
The test problems in table \ref{table:nonsingular} correspond to test problems $14.1.1-14.2.6$ in \cite{floudas_handbook}. Following \cite{KaYaFu04}, we take $x_0$ to be the vector of lower bounds found in \cite{floudas_handbook} for these problems. 
The test problems in table \ref{table:singular} are taken from various sources in the literature, which we include by the name of the problem in the first column. The largest of these problems has dimension $n=8$. The initial iterate is taken from the source for each problem except Dayton10, for which we take $x_0 = (0.9,0.9,0.9,0.9)^T.$ The fifth column $||f(x)||$ is the value of the residual upon termination. The final column, following \cite{KaYaFu04}, records the number of standard projected Levenberg-Marquardt steps taken, the number of Armijo linesearch steps, and the number of projected gradient step (see Algorithm 3.12 and the first paragraph of section 4 in \cite{KaYaFu04}). We also use this column to count the number of Armijo linesearch steps taken in Armijo-N.Anderson and $\gamma$-Armijo-N.Anderson. The number of function evaluations, denoted by $f$-evals, is not recorded for N.Anderson or $\gamma$-N.Anderson since for these algorithms the number of function evaluations is always the number of iterations plus one.

For the nonsingular problems,
as shown in table \ref{table:nonsingular},
 $\gamma$-N.Anderson performs comparably to 
the Levenberg-Marquardt method in all but two problems: Eq-Combustion and Robot Kin. 
Sys.  
All methods perform similarly with the Ferraris-Tronconi function. For the Bullard-Biegler function, for our chosen initial iterate, N.Anderson fails to converge.
We recover convergence by applying $\gamma$-safeguarding, a linesearch, or both, and $\gamma$-N.Anderson outperforms Armijo-N.Anderson. In fact, $\gamma$-N.Anderson is seen to outperform or match Armijo-N.Anderson for each of the nonsingular test problems. 

For the singular problems, 
as shown in table \ref{table:singular},
 N.Anderson and its variations all outperform Projected-Levenberg-Marquardt, except for Dayton10 where $\gamma$-Armijo-N.Anderson failed to converge. For most of the problems, applying $\gamma$-safeguarding, a linesearch, or both yields similar results. 
Although the linesearch can yield better results than $\gamma$-safeguarding, the latter requires fewer function evaluations than a linesearch; and in some cases, far fewer.

\section{Conclusion}\label{conclusion}
We have presented an analysis of and convergence theory for the N.Anderson method in a general Euclidean space when the derivative has a nontrivial null space at the root. 
Efficient and robust methods for this problem class are of continuing 
importance as they naturally arise at bifurcation points in parameter-dependent 
mathematical models.
As motivated by previous numercial benchmarking results where greater algorithmic 
depths were explored but generally not found beneficial, we restricted our attention 
to an algorithmic depth of one
for Anderson accelerated Newton iterations. Greater algorithmic depths and methods
for problems with nullspaces of higher dimension may be explored
in future work if they are found to be advantageous in relevant applications. 

We showed that near the null space, the component of the error along the null space is accelerated by a factor determined by the success of the minimization step in the N.Anderson algorithm, which is measured by the optimization gain. In other words, 
it is shown that the region in which standard Newton exhibits linear convergence is precisely the region most susceptible to acceleration, 
demonstrated by a substantial decrease in the convergence 
rate when the optimization gain is small. 
We developed a novel and theoretically supported safeguarding scheme that when applied to N.Anderson ensures local convergence under the same conditions as standard Newton; moreover, the rate of convergence 
is improved in general, depending on the optimization gain. 
The theory was demonstrated with benchmark problems from the literature.

\section{Acknowledgements} 
MD and SP are supported in part by the National Science Foundation NSF-DMS 2011519.

\bibliographystyle{plain}
\bibliography{newt_and_sing_pts}

\appendix

\section{Numerical Results for Small-Scale Problems}
\label{appendix-a}
\begin{table}[H]
\begin{tabular}{|c|c|c|c|c|c|}
\hline
Problem &Algorithm &Iterations &f-evals &$||f(x)||$ &LM/LS/PG \\
\hline
\multirow{5}{2cm}{Himmelbau} &Proj-Lev-Marq &6 &7 &2.842e-14 &6/0/0 \\
&N.Anderson &8 &- &7.105e-15 &- \\
&$\gamma$-N.Anderson(0.5) &6 &- &5.309e-12 &- \\
&Armijo-N.Anderson &8 &9 &7.105e-15 &-/0/- \\
&$\gamma$-Armijo-N.Anderson(0.5) &7 &8 &5.309e-12 &-/0/- \\
\hline
\multirow{5}{2cm}{Eq-Combustion} &Proj-Lev-Marq &11 &28 &8.200e-14 &8/3/0 \\
&N.Anderson &35 &- &8.510e-12 &- \\
&$\gamma$-N.Anderson(0.5) &17 &- &3.092e-09 &- \\
&Armijo-N.Anderson &18 &56 &2.136e-10 &-/2/- \\
&$\gamma$-Armijo-N.Anderson(0.5) &17 &18 &3.092e-09 &-/0/- \\
\hline
\multirow{5}{2cm}{Bullard-Biegler} &Proj-Lev-Marq &13 &26 &2.270e-10 &10/3/0 \\
&N.Anderson &F &- &- &- \\
&$\gamma$-N.Anderson(0.5) &11 &- &1.799e-12 &- \\
&Armijo-N.Anderson &20 &202 &1.212e-10 &-/12/- \\
&$\gamma$-Armijo-N.Anderson(0.5) &13 &34 &1.629e-11 &-/6/- \\
\hline
\multirow{5}{2cm}{Ferraris-Tronconi} &Proj-Lev-Marq &4 &5 &5.339e-14 &4/0/0 \\
&N.Anderson &4 &- &6.937e-11 &- \\
&$\gamma$-N.Anderson(0.5) &4 &- &6.008e-11 &- \\
&Armijo-N.Anderson &4 &5 &6.937e-11 &-/0/- \\
&$\gamma$-Armijo-N.Anderson(0.5) &4 &5 &6.008e-11 &-/0/- \\
\hline
\multirow{5}{2cm}{Brown's Al. Lin.} &Proj-Lev-Marq &9 &10 &1.638e-14 &9/0/0 \\
&N.Anderson &19 &- &5.137e-10 &- \\
&$\gamma$-N.Anderson(0.5) &11 &- &1.441e-11 &- \\
&Armijo-N.Anderson &11 &19 &1.286e-10 &-/2/- \\
&$\gamma$-Armijo-N.Anderson(0.5) &11 &12 &1.441e-11 &-/0/- \\
\hline
\multirow{5}{2cm}{Robot Kin. Sys.} &Proj-Lev-Marq &5 &6 &6.404e-10 &5/0/0 \\
&N.Anderson &9 &- &7.390e-14 &- \\
&$\gamma$-N.Anderson(0.5) &8 &- &4.290e-14 &- \\
&Armijo-N.Anderson &9 &10 &7.390e-14 &-/0/- \\
&$\gamma$-Armijo-N.Anderson(0.5) &8 &9 &4.290e-14 &-/0/- \\
\hline
\end{tabular}
\caption{\label{table:nonsingular}Nonsingular Benchmark Problems}
\end{table}

\begin{table}[H]

\begin{tabular}{|c|c|c|c|c|c|}
\hline
Problem &Algorithm &Iterations &f-evals &$||f(x)||$ &LM/LS/PG \\
\hline
\multirow{5}{2cm}{Decker1\cite{DeKeKe83}} &Proj-Lev-Marq &15 &16 &3.559e-09 &15/0/0 \\
&N.Anderson &9 &- &2.698e-12 &- \\
&$\gamma$-N.Anderson(0.9) &8 &- &4.187e-09 &- \\
&Armijo-N.Anderson &9 &10 &2.698e-12 &-/0/- \\
&$\gamma$-Armijo-N.Anderson(0.9) &8 &9 &4.187e-09 &-/0/- \\
\hline
\multirow{5}{2cm}{Decker2\cite{DeKe80-1}} &Proj-Lev-Marq &16 &17 &3.356e-09 &16/0/0 \\
&N.Anderson &7 &- &3.118e-09 &- \\
&$\gamma$-N.Anderson(0.9) &7 &- &8.659e-09 &- \\
&Armijo-N.Anderson &7 &8 &3.118e-09 &-/0/- \\
&$\gamma$-Armijo-N.Anderson(0.9) &7 &8 &8.659e-09 &-/0/- \\
\hline
\multirow{5}{2cm}{Ojika1\cite{Oj88}} &Proj-Lev-Marq &28 &59 &2.419-09 &18/10/0 \\
&N.Anderson &19 &- &1.620e-09 &- \\
&$\gamma$-N.Anderson(0.9) &17 &- &5.990e-09 &- \\
&Armijo-N.Anderson &17 &69 &4.214e-09 &-/5/- \\
&$\gamma$-Armijo-N.Anderson(0.9) &17 &64 &6.162e-09 &-/4/- \\
\hline
\multirow{5}{2cm}{Ojika2\cite{OjWaMi83} } &Proj-Lev-Marq &13 &14 &2.909e-09 &13/0/0 \\
&N.Anderson &7 &- &3.096e-09 &- \\
&$\gamma$-N.Anderson(0.9) &7 &- &7.165e-09 &- \\
&Armijo-N.Anderson &7 &8 &3.096e-09 &-/0/- \\
&$\gamma$-Armijo-N.Anderson(0.9) &7 &8 &7.165e-09 &-/0/- \\
\hline
\multirow{5}{2cm}{Pollock1\cite{posc20}} &Proj-Lev-Marq &14 &15 &3.991e-09 &14/0/0 \\
&N.Anderson &5 &- &1.656e-10 &- \\
&$\gamma$-N.Anderson(0.9) &5 &- &8.268e-10 &- \\
&Armijo-N.Anderson &5 &6 &1.656e-10 &-/0/- \\
&$\gamma$-Armijo-N.Anderson(0.9) &5 &6 &8.268e-10 &-/0/- \\
\hline
\multirow{5}{2cm}{Dayton10\cite{DaZe05}} &Proj-Lev-Marq &F &- &- &- \\
&N.Anderson &11 &- &3.294e-10 &- \\
&$\gamma$-N.Anderson(0.5) &14 &- &5.157e-09 &- \\
&Armijo-N.Anderson &15 &140 &2.153e-11 &-/4/- \\
&$\gamma$-Armijo-N.Anderson(0.5) &F &- &- &-/-/- \\
\hline
\multirow{5}{2cm}{Hueso1\cite{HMT09}} &Proj-Lev-Marq &13 &14 &3.701e-09 &13/0/0 \\
&N.Anderson &12 &- &5.417e-09 &- \\
&$\gamma$-N.Anderson(0.9) &12 &- &4.918e-09 &- \\
&Armijo-N.Anderson &9 &39 &7.184e-09 &-/1/- \\
&$\gamma$-Armijo-N.Anderson(0.9) &10 &40 &6.264e-09 &-/1/- \\
\hline
\multirow{5}{2cm}{Hueso6\cite{HMT09}} &Proj-Lev-Marq &16 &17 &3.531e-09 &16/0/0 \\
&N.Anderson &6 &- &3.459e-10 &- \\
&$\gamma$-N.Anderson(0.9) &6 &- &6.590e-09 &- \\
&Armijo-N.Anderson &6 &7 &3.459e-10 &-/0/- \\
&$\gamma$-Armijo-N.Anderson(0.9) &6 &7 &6.590e-09 &-/0/- \\
\hline

\end{tabular}
\caption{\label{table:singular}Singular Benchmark Problems}
\end{table}

\section{Proof of Convergence Theorem}
\label{appendix-b}

We'll now prove theorem \ref{thm:convergence-thm}. We restate it here for convenience. 

\begin{theorem}
Let $\dim N=1$, and let $\hat{D}_N(x)$ be invertible as a map on $N$ for all $P_N(x-x^*)\neq 0$. Let $W_k=W(\|e_k\|,\sigma_k,x^*)$. If $x_0$ is chosen so that $\sigma_0<\hat{\sigma}$ and $\|e_0\|<\hat{\rho}$, for sufficiently small $\hat{\sigma}$ and $\hat{\rho}$, $x_1=x_0+w_1$, and $x_{k+1}=L_{k+1}^{\lambda}(x_k+w_{k+1},x_{k-1}+w_k)$ for $k\geq 1$, then $W_{k+1}\subset W_0$ for all $k\geq 0$ and $x_k\to x^*$. That is, $\set{x_k}$ remains well-defined and converges to $x^*$. Furthermore, there exist constants $c_4>0$ and $\kappa\in(1/2,1)$ such that 
\begin{align}
  \|P_Re_{k+1}\|&\leq
	 c_4\max\set{|1-\lambda_{k+1}\gamma_{k+1}|\,\|e_k\|^2,|\lambda_{k+1}\gamma_{k+1}|\,\|e_{k-1}\|^2}\label{range-bd-main-app}\\
   \|P_Ne_{k+1}\|&< \kappa\theta_{k+1}^{\lambda}\|P_Ne_{k}\|\label{null-rate-of-convergence-main-app}
\end{align}
for all $k\geq 1$.
\end{theorem}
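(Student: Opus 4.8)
The plan is to argue by induction on $k$, carrying along the hypothesis that $x_0,\dots,x_k$ all lie in $\hat W=W(\hat\rho,\hat\sigma,x^*)$ — so that $f'$ is invertible at these iterates and the expansions of \cref{error-expansion} are valid — together with $\|e_j\|<\|e_{j-1}\|$ and $\sigma_j<\hat\sigma$ for $1\le j\le k$. The base case is direct: $x_0\in\hat W$ by hypothesis, and since $x_1=x_0+w_1$ is a pure Newton step, \eqref{keybound1}--\eqref{keybound2} give $P_Ne_1=\tfrac12P_Ne_0+\bigo(\|e_0\|^2)$ and $P_Re_1=\bigo(\|e_0\|^2)$, whence $\|e_1\|<\|e_0\|<\hat\rho$ and $\sigma_1=\bigo(\|e_0\|)<\hat\sigma$ for $\hat\rho$ small, so $x_1\in\hat W$.

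For the inductive step, \eqref{range-bd-main-app} is immediate from \eqref{range-comp-bound} with $\gamma_{k+1}$ replaced by $\lambda_{k+1}\gamma_{k+1}$. The key new ingredient is a lower bound on $\|P_Ne_{k+1}\|$: writing $e_{k+1}=(1-\lambda_{k+1}\gamma_{k+1})(e_k+w_{k+1})+\lambda_{k+1}\gamma_{k+1}(e_{k-1}+w_k)$ and using $\dim N=1$, \cref{gamma-safeguarding} — which picks $\lambda_{k+1}$ so that $\nu_{k+1}\le r<1$ with the $(k-1)$-term attaining the minimum — together with the reverse triangle inequality yields $\|P_Ne_{k+1}\|\ge(1-r)\,|1-\lambda_{k+1}\gamma_{k+1}|\,\|P_N(e_k+w_{k+1})\|$, while \eqref{keybound1} bounds $\|P_N(e_k+w_{k+1})\|$ below by a factor close to $\tfrac12$ times $\|P_Ne_k\|$. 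Since $\sigma_k,\sigma_{k-1}<\hat\sigma$, the term $\tfrac12P_Ne_k$ is norm-dominant in $e_k+w_{k+1}$, so $(x_k,x_{k-1})$ is an N-pair; and the lower bound just obtained shows $P_Ne_k^\alpha$ is not annihilated by the Newton--Anderson combination, so it is a \emph{strong} N-pair.

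Now the rate. By \cref{strong-n-pairs-are-compatible} the strong N-pair $(x_k,x_{k-1})$ is compatible, provided $r_{k+1}^e<1$; here $E_{k+1}=\tfrac12P_Ne_k^\alpha$, so $r_{k+1}^e=\|(T_kP_Re_k)^\alpha+P_Nq_{k-1}^k\|/\|\tfrac12P_Ne_k^\alpha\|$, and combining the numerator estimates (each term $\bigo(\sigma_j\|P_Ne_j\|)$ or $\bigo(\|e_j\|^2)$) with the lower bound on $\|P_Ne_k^\alpha\|$ and the companion safeguarding bound $|\lambda_{k+1}\gamma_{k+1}|\,\|P_Ne_{k-1}\|\le\tilde r\,|1-\lambda_{k+1}\gamma_{k+1}|\,\|P_Ne_k\|$ (deduced from \eqref{keybound2}) gives $r_{k+1}^e\le\bar r$, where $\bar r<1$ depends only on $\hat\sigma,\hat\rho$ and $\bar r\to0$ as $\hat\sigma,\hat\rho\to0$. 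Then \cref{theta-controls-error-prop} gives $\|P_Ne_{k+1}\|\le\big((1+\bar r)/(1-\bar r)\big)\theta_{k+1}^\lambda\|w_{k+1}\|$, and bounding $\|w_{k+1}\|\le\tfrac12\big(1+c_1\hat\sigma+c_2(1+\hat\sigma)\hat\rho\big)\|P_Ne_k\|$ via \eqref{keybound2} yields \eqref{null-rate-of-convergence-main-app} with $\kappa:=\big((1+\bar r)/(1-\bar r)\big)\cdot\tfrac12\big(1+c_1\hat\sigma+c_2(1+\hat\sigma)\hat\rho\big)$, independent of $k$; since the first factor is $\ge1$ and tends to $1$, and the second is $\ge\tfrac12$ and tends to $\tfrac12$, one has $1/2<\kappa<1$ once $\hat\sigma,\hat\rho$ are small. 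As $\theta_{k+1}^\lambda\le1$ (it is a convex combination of $w_{k+1}$ and $w_{k+1}^\alpha$ normalized by $\|w_{k+1}\|\ge\|w_{k+1}^\alpha\|$), this gives $\|P_Ne_{k+1}\|<\kappa\|P_Ne_k\|$; combined with $\|P_Re_{k+1}\|=\bigo(\max\{\|e_k\|^2,\|e_{k-1}\|^2\})$ this closes the induction — $\|e_{k+1}\|\le\|P_Ne_{k+1}\|+\|P_Re_{k+1}\|<\|e_k\|$, the same two estimates keep $\sigma_{k+1}<\hat\sigma$ and give $W_{k+1}\subset W_0$ for $\hat\rho$ small — and forces $\|P_Ne_k\|\to0$ geometrically, hence $\|e_k\|\to0$ and $x_k\to x^*$.

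The step I expect to be the main obstacle is the uniform-in-$k$ bookkeeping: one must transfer the $\gamma$-safeguarding estimate of \cref{gamma-safeguarding}, phrased in terms of the quantities $e_i+w_{i+1}$, to the components $P_Ne_i,P_Re_i$ via \eqref{keybound1}--\eqref{keybound2}, while keeping the coefficients $|1-\lambda_{k+1}\gamma_{k+1}|$ and $|\lambda_{k+1}\gamma_{k+1}|$ correctly paired, so that $r_{k+1}^e$, the compatibility constant, and $\sigma_{k+1}$ end up controlled by $\hat\rho$ and $\hat\sigma$ alone — with no dependence on $k$ nor on how small $\lambda_{k+1}\gamma_{k+1}$ happens to be. This uniformity is exactly what the safeguarding is engineered to provide, and it is the crux that promotes the one-step estimates of \cref{null-comp-and-theta}--\cref{analysis-of-pair-types} into a genuine convergence statement.
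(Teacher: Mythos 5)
Your overall route is the same as the paper's: induct on $k$, use \cref{gamma-safeguarding} to keep the safeguarded ratio $\nu_{k+1}$ (equivalently $\overset{\sim}{\nu}_{k+1}$) bounded away from one so that the combined null term $\tfrac12 P_Ne_k^{\alpha}$ cannot cancel, conclude the pair is a (compatible) strong N-pair, bound the ratio of the remaining terms to the dominant one by $\bigo(\max\{\hat\sigma,\hat\rho\})$, pass through $\theta_{k+1}^{\lambda}\|w_{k+1}\|$ and \eqref{keybound2} to get $\|P_Ne_{k+1}\|<\kappa\theta_{k+1}^{\lambda}\|P_Ne_k\|$ with $\kappa\in(1/2,1)$, take \eqref{range-bd-main-app} from \eqref{range-comp-bound}, and control $\sigma_{k+1}$ by the same lower bound on $\|P_Ne_{k+1}\|$. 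The paper carries out these same estimates explicitly (its constants $C_1,C_2,C_3$ play the role of your $\bar r$) rather than citing \cref{strong-n-pairs-are-compatible} and \cref{theta-controls-error-prop} by name, but that is a presentational difference, not a different argument.

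There is, however, one genuine gap: the monotonicity claim $\|e_{k+1}\|<\|e_k\|$, which you build into the induction hypothesis and use to ``close the induction,'' does not follow from your estimates. You have $\|e_{k+1}\|\le\|P_Ne_{k+1}\|+\|P_Re_{k+1}\|<\kappa\|P_Ne_k\|+c\max\{\|e_k\|^2,\|e_{k-1}\|^2\}$, and the second term is controlled by $\|e_{k-1}\|^2$, which after a strongly accelerated step (precisely the situation the method is designed to produce, with $\|e_k\|\ll\|e_{k-1}\|$) can exceed $\|e_k\|$; so strict decrease of the full error norm can fail. This is why the paper does not claim it: instead it compares every error to $\|e_0\|$ through the auxiliary sequences $\psi_k$ and $\eta_k$ satisfying the two-term recursion $\eta_k=c\max\{\eta_{k-1},\eta_{k-2}\}^2\|e_0\|+\psi_k$, shows $\eta_k$ is decreasing with $\eta_{k+1}<\phi^k\eta_1$, and deduces $W_{k+1}\subset W_0$ and $\|e_k\|\to0$ from that. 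Your argument is repairable along the same lines without new ideas: replace the hypothesis $\|e_j\|<\|e_{j-1}\|$ by $\|e_j\|<\|e_0\|$ (indeed $\|e_{k+1}\|\le(\kappa+c\hat\rho)\|e_0\|<\|e_0\|$ for $\hat\rho$ small), keep the geometric decay $\|P_Ne_{k+1}\|<\kappa\|P_Ne_k\|$, and then conclude $\|e_k\|\to0$ from $\limsup\|e_k\|\le c\hat\rho\,\limsup\|e_k\|$ or via the paper's $\eta_k$ device. Relatedly, your containment argument as written only yields $\sigma_{k+1}<\hat\sigma$, whereas the stated conclusion $W_{k+1}\subset W_0$ requires $\sigma_{k+1}<\sigma_0$; as in the paper, this needs the extra step of shrinking $\|e_0\|$ relative to $\sigma_0$ so that the bound $\sigma_{k+1}\le c\|e_0\|$ falls below $\sigma_0$.
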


\begin{proof}
Throughout this proof, we assume $\gamma$-safeguarding is in use, and will simply write ``$\gamma_{k+1}$" in place of ``$\lambda_{k+1}\gamma_{k+1}$". Further, it's assumed that $\gamma_k<1$ for all $k$. If $\gamma_k=1$, we take a standard Newton step. We'll let $c$ denote intermediate constants that will eventually be absorbed into another. The bound in \eqref{range-bd-main-app} is just a restatement of \eqref{range-comp-bound} with $\gamma$-safeguarding. It will follow for all $k$ once we prove that $\set{x_k}$ is well-defined. The 
remainder of the proof proceeds
as follows. We'll first show inductively that $W_k\subset W_0$ for all $k$, thus ensuring $\set{x_k}$ is well-defined. Along the way we'll establish \eqref{null-rate-of-convergence-main-app}, and define a sequence $\set{\eta_k}$ such that $\|e_{k+1}\|<\eta_k\|e_0\|$. Then we'll show $\eta_k\to 0$, proving convergence. We begin with $x_2$. 

It's known from standard Newton theory (cf \cite{DeKeKe83}) that if $P_Ne_0\neq 0$ and $\sigma_0$ and $\|e_0\|$ are sufficiently small there exists constants $c>0$, $\psi_0\in (0,1)$, $\eta_0\in (0,1)$, and $s_0\in (0,1)$ such that $\|P_Re_1\|<c\|e_0\|^2$, $\|P_Ne_1\|<\psi_0 \|P_Ne_0\|$, $\|e_1\|<\eta_0\|e_0\|$, and $\sigma_1<s_0\sigma_0$. Moreover, $\hat{D}_N(x_0)$ and $\hat{D}_N(x_1)$ are invertible as a maps on $N$. Hence \eqref{newton-anderson-error} and \eqref{theta-w-expansion} hold. Apply $P_N$ to \eqref{newton-anderson-error} with $k=1$ and pull out $\|(1/2)P_Ne_1^{\alpha}\|$ to get 
\begin{align}
    \|P_Ne_{2}\|&\leq \bigg(1+\frac{\|T_1P_Re_1^{\alpha}+q_0^1\|}{\|P_Ne_1^{\alpha}/2\|}\bigg)\|P_Ne_1^{\alpha}/2\|\\
    &\leq \bigg(1+\frac{\|T_1P_Re_1^{\alpha}+q_0^1\|}{(1-\overset{\sim}{\nu}_{2})\max\set{|1-\gamma_{2}|\,\|P_Ne_1\|,|\gamma_{2}|\,\|P_Ne_0\|}/2}\bigg)\|P_Ne_1^{\alpha}/2\|.\nonumber
\end{align}
Lemma \ref{gamma-safeguarding} guarantees that with $\gamma$-safeguarding, $1/(1-\overset{\sim}{\nu}_{k+1})$ remains bounded independent of $k$. Moreover,  
\begin{align}\label{bound-on-hot}
\scalebox{0.94}{
    $\|T_1P_Re_1^{\alpha}+q_0^1\|\leq c_5\max\set{|1-\gamma_2|\,\|P_Re_1\|,|\gamma_2|\,\|P_Re_0\|,|1-\gamma_2|\,\|e_1\|^2,|\gamma_2|\,\|e_0\|^2}.$}
\end{align} 
The constant $c_5$ here depends on
$f$. Since for any real numbers $a$ and $b$, $1/\max\set{a,b}\leq \min\set{1/a,1/b}$, it follows that 
\begin{align}
    \|T_1P_Re_1^{\alpha}+q_0^1\|/\|P_Ne_1^{\alpha}\|\leq c\max\{\|P_Re_i\|/\|P_Ne_i\|,\|e_i\|^2/\|P_Ne_i\|\},
\end{align}
with $i=0$ or $i=1$ depending on the value of the right hand side of \eqref{bound-on-hot}. The constant $c$ here depends on $f$ and the value of $r$ in \cref{alg:gamma-safeguarding-pseudocode}. Noting that $\|e_i\|^2/\|P_Ne_i\|<(1+\sigma_i)\|e_i\|$, it follows that
\begin{align}
    \|P_Ne_2\|\leq(\,1+C_1(\sigma_{0,1},\|e_{0,1}\|)\,)\|(1/2)\|P_Ne_1^{\alpha}\|,
\end{align} where $C_1(\sigma_{0,1},\|e_{0,1}\|)=c\max\set{\sigma_0,\sigma_1,\|e_0\|,\|e_1\|}$. 
Through an analogous argument applied to \eqref{theta-w-expansion} and the definition of $\theta_{k+1}$, we can bound $|(1/2)P_Ne_1^{\alpha}\|$ in terms of $\|w_2^{\alpha}\|$ to obtain $\|P_Ne_2\|\leq \big((\,1+C_1(\sigma_{0,1},\|e_{0,1}\|)\,)(1-C_2(\sigma_{0,1},\|e_{0,1}\|)\,)^{-1}\big)\theta_{2}\|w_{2}\|.$ Using \eqref{keybound2} to bound $\|w_2\|$ in terms of $\|P_Ne_1\|$ yields
    \begin{align}
    \|P_Ne_2\|\leq \bigg(\frac{(1/2)(1+C_1(\sigma_{0,1},\|e_{0,1}\|)\,)(1+C_3(\sigma_{0,1},\|e_{0,1}\|)}{1-C_2(\sigma_{0,1},\|e_{0,1}\|)}\bigg)\theta_{2}\|P_Ne_1\|.
    \end{align}
We'll write $(1+C_1(\sigma_{0,1},\|e_{0,1}\|)(1+C_3(\sigma_{0,1},\|e_{0,1}\|)=(1+C_1(\sigma_{0,1},\|e_{0,1}\|)$.  
 Reducing $\|e_0\|$ and $\sigma_0$ further if necessary, we have 
 \begin{align}\label{main-null-bound}
     \|P_Ne_2\|<\kappa\theta_2\|P_Ne_1\|
 \end{align}
where $1/2<\kappa<1$.
 Note that $\|P_Ne_1\|\leq \psi_0\|P_Ne_0\|\leq \psi_0(1-\sigma_0)^{-1}\|e_0\|$. Writing $\psi_1:= \kappa\theta_2\psi_0(1-\sigma_0)^{-1}$ gives $\|P_Ne_2\|\leq \psi_1\|e_0\|$. 
 Since we enforce $|\gamma_k|\leq 1$ for all $k$ with safeguarding, it follows that 
 \begin{align}\label{main-range-bound}
     \|P_Re_2\|\leq c\max\set{\|e_1\|^2,\|e_0\|^2}.
 \end{align}
 By standard Newton theory we have $\|e_1\|<\|e_0\|$, so in this case $\|P_Re_2\|<c\|e_0\|^2$. Equations \eqref{main-null-bound} and \eqref{main-range-bound} give
$\|e_2\| \leq \|P_Re_2\|+\|P_Ne_2\|\leq \big(c\|e_0\|+\psi_1\big)\|e_0\|.$
Reducing $\|e_0\|$ and $\sigma_0$ if necessary, we can ensure $\eta_1:=\big(c_1\|e_0\|+\psi_1\big)<1$. Hence  
$\|e_2\|<\eta_1\|e_0\|$.
Next we consider $\sigma_2$. Applying the reverse triangle inequality directly to $L_{2}^{\lambda}(e_1+w_2,e_0+w_1)$ gives $\|P_Ne_2\|\geq c(1-\nu_{2})\max\{|1-\gamma_{2}|\,\|P_Ne_1\|,|\gamma_2|\,\|P_Ne_0\|\}$ 
It follows that
\begin{align}
    \sigma_2\leq \frac{c\max\set{|1-\gamma_{2}|\,\|e_1\|^2,|\gamma_{2}|\,\|e_{0}\|^2}}{(1-\nu_{2})\max\set{|1-\gamma_{2}|\,\|P_Ne_1\|,|\gamma_2|\,\|P_Ne_0\|}}\leq \frac{c(1+\sigma_0)}{1-\nu_{2}}\|e_0\|,
\end{align} 
The last inequality follows since $\|e_1\|<\eta_0 \|e_0\|<\|e_0\|$. By \cref{gamma-safeguarding}, $c(1+\sigma_0)/(1-\nu_{2})<c_6$. Once more reducing $\|e_0\|$ if necessary, we can ensure 
\begin{align}\label{main-sigma-bound}
    \sigma_2<c_6\|e_0\|<s_0\sigma_0. 
\end{align}
Hence $W_2\subset W_0$, and $x_3$ is well-defined. Thus \eqref{main-null-bound}, \eqref{main-range-bound}, and \eqref{main-sigma-bound} all hold with each subscript increased by one since $C_1(\sigma_{1,2},\|e_{1,2}\|)=c\max\set{\sigma_1,\sigma_2,\|e_1\|,\|e_2\|}<c\max\set{\sigma_0,\sigma_1,\|e_0\|,\|e_1\|}=C_1(\sigma_{0,1},\|e_{0,1}\|)$. 
If we let $\psi_2:=\kappa\theta_3\psi_1$, then $\|e_3\|<(c\max\set{\eta_0,\eta_1}^2\|e_0\|+\psi_2)\|e_0\|$. Let $\eta_2:=(c\max\set{\eta_0,\eta_1}^2\|e_0\|+\psi_2)$ so that $\|e_3\|<\eta_2\|e_0\|$. As $\max\set{\eta_0,\eta_1}\leq \hat{\eta}<1$, $\eta_2\leq \max\set{\hat{\eta},\kappa\theta_3}(c\|e_0\|+\psi_1)=\max\set{\hat{\eta},\kappa\theta_3}\eta_1<\eta_1$ since $\kappa\theta_3<1$. Let $\phi=\max\set{\hat{\eta},\kappa}<1$ so that $\eta_2<\phi\eta_1<\hat{\eta}$. Therefore $\|e_3\|<\|e_0\|$ and  $W_3\subset W_0$. 
Proceeding inductively, it can be shown that for all that for all $k\geq 1$, $W_{k+1}\subset W_0$, $\|P_Ne_{k+1}\|< \kappa\theta_{k+1}||Pe_k||<\psi_{k}\|e_0\|$, and $\|e_{k+1}\|<\eta_k\|e_0\|$. This proves that $x_k$ remains well defined, and establishes \eqref{range-bd-main-app} and \eqref{null-rate-of-convergence-main-app}. 

It remains to show that $e_k$ goes to zero. It suffices to prove that $\set{\eta_k}$ defined by $\eta_k=c\max\set{\eta_{k-1},\eta_{k-2}}^2\|e_0\|+\psi_k$ converges to zero. Again by induction, one can show that $\eta_k$ decreases, and for $k\geq 7$ we have $\eta_{k+1}<\phi^k\eta_1$. Thus $\eta_k\to 0$, and therefore $\|e_{k+1}\|\to 0$. This completes the proof.
\end{proof}

Note that the essential parts of the proof rely on the structure of $\|P_Ne_k\|$ and $\|P_Re_k\|$. For any order root, the error expansions are of the form 
\begin{align}
	P_N(e_k+w_{k+1}) &= \delta P_Ne_k+\hat{T}_kP_Re_k+\bigo(\|e_k\|^2)\\
	P_R(e_k+w_{k+1}) &= \bigo(\|e_k\|^{\mu})
\end{align}
where $\delta\in(0,1)$ and $\mu\geq 2$. Thus an analogous argument can be used to prove \ref{thm:convergence-ho}. The only noticable difference occurs at equation \eqref{main-null-bound}, where in the general case $\kappa\in(d/(d+1),1)$. This, however, does not effect convergence since $\theta_{k+1}\leq 1$ for all $k$. 

\end{document}